\newtheorem{Th}{Theorem}[section]
\newtheorem{lem}[Th]{Lemma}
\theoremstyle{definition}
\newtheorem{Def}[Th]{Definition}
\newtheorem{Cor}[Th]{Corollary}
\newtheorem{Prop}[Th]{Proposition}
\theoremstyle{remark}
\newtheorem{rem}[Th]{Remark}
\numberwithin{equation}{section}
\newcommand{\tend}[3][]{\xrightarrow[#2\to#3]{#1}}
\newcommand{\egdef}{\stackrel{\textrm {def}}{=}}
\newcommand{\ds}{\displaystyle}
\newcommand{\R}{\mathbb{R}}
\newcommand{\T}{\mathbb{T}}
\begin{document}

\title{Calculus of Generalized Riesz Products}

\author{e. H. el Abdalaoui}
\address{Normandie University, University of Rouen
  Department of Mathematics, LMRS  UMR 60 85 CNRS\\
Avenue de l'Universit\'e, BP.12
76801 Saint Etienne du Rouvray - France .}
\email{elhoucein.elabdalaoui@univ-rouen.fr}
\urladdr{http://www.univ-rouen.fr/LMRS/Persopage/Elabdalaoui/}
\author{M. G. Nadkarni}
\address{Department of Mathematics, University of Mumbai, Vidyanagari, Kalina,  Mumbai, 400098, India}
\email{mgnadkarni@gmail.com}
\urladdr{http://insaindia.org/detail.php?id=N91-1080}

\subjclass[2010]{Primary 37A05, 37A30, 37A40; Secondary 42A05, 42A55}

\dedicatory{}

\keywords{simple spectrum, simple Lebesgue spectrum, Banach problem, singular spectrum, Mahler measure, rank one maps, Generalized Riesz products, Hardy spaces, outer functions, inner functions, flat polynomials, ultraflat polynomials, Littelwood problem, zeros of polynomials}

\begin{abstract}
In this paper we discuss generalized Riesz products bringing into consideration $H^p$ theory, the notion of Mahler measure, and the zeros of polynomials appearing in the generalized Riesz product. Formula for Radon-Nikodym derivative between two generalized Riesz product is established under suitable conditions. This is then used to formulate a Dichotomy theorem and prove a conditional version of it. A discussion involving flat polynomials is given.
\end{abstract}

\maketitle

\section{Introduction.}
 Generalized Riesz products considered in this paper are the ones defined in \cite{Host-Mela-Parreau},\cite{Nadkarni1} where one of the  aims was to describe the spectrum of a non-singular rank one map as a generalized Riesz product. Generalized Riesz product has  remained only in the state of definition although much deep work has appeared over last two decades on special generalized Riesz products arising in the spectral study of measure preserving rank one maps of ergodic theory.\\

 The purpose of this paper is to set forth some basic facts of generalized
 Riesz products and bring into play Hardy class theory to discuss some of the problems arising in the subject.
 It is surprising that one can garner so much information simply from the fact that $L^2$ norm of the trigonometric polynomials
 appearing in a generalized Riesz product is one. These facts are discussed in section 2 and 3 of the paper.
  Section 4 gives a formula for Radon-Nikodym derivative of two generalized Riesz products.
  In section 5 we discuss a conditional dichotomy result. Connection with flat polynomials is discussed in section 6 and
  a result on zeros of polynomials under consideration is given in section 7.\\

In the rest of this section we give some background material.\\

\paragraph{\textbf{Riesz Products.}} Consider a trigonometric series
$$\sum_{n=-\infty}^{+\infty}a_n z^n,~~z \in S^1,$$
where $S^1 = \{z ~~:~~ | z |= 1\}$, the circle group. If we ignore those terms whose coefficients are zero, then we can write the trigonometric series as
$$\sum_{k=-\infty}^{+\infty}a_{n_k}z^{n_k},~~z \in S^1.$$
Now if $|\frac{n_{k+1}}{n_k}|> q$, $\forall k$, for some $q > 1$, then the series is said to be lacunary. The
convergence questions for a lacunary trigonometric are answered by:
\begin{Th}[{\cite[p.203]{Zygmund}}]\label{thZ}
A lacunary trigonometric series converges on a set of positive Lebesgue measure if and only if its coefficients form an $\ell^2$ sequence. If the coefficients are not square summable then the lacunary trigonometric series is not a Fourier series (of any $L^1(S^1, dz)$ function).
\end{Th}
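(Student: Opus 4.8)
Throughout write the nonzero terms as $\sum_k a_{n_k} z^{n_k}$ with $|n_{k+1}/n_k| > q > 1$, put $s_N(z)=\sum_{|k|\le N} a_{n_k}z^{n_k}$, and regard $z=e^{2\pi i x}$, so that normalized Lebesgue measure on $S^1$ is a probability measure and each $a_{n_k}z^{n_k}$ is a complex variable of mean $0$ and variance $|a_{n_k}|^2$. The whole argument rests on the fact that, thanks to the gap condition, these terms behave like independent random variables; the plan is first to record this quasi-independence and then to feed it into classical convergence theorems. Concretely, for $q>2$ one has $\sum_{j<k} n_j < n_k$, so the frequencies are dissociate, and the joint moments of the $z^{n_k}$ factorize (the same computation that proves Sidonicity). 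For a general ratio $q>1$ I would split $\{n_k\}$ into finitely many subsequences, each of ratio $\ge 3$, prove the result on each, and reassemble --- a routine but slightly fussy reduction.

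With quasi-independence in hand I would settle the equivalence ``convergence a.e.\ $\Longleftrightarrow \sum_k|a_{n_k}|^2<\infty$'' in one stroke via Kolmogorov's three-series theorem (or his two-series theorem after truncation): for a sum of independent centered variables the variances govern almost-sure convergence, and here $\mathrm{Var}(a_{n_k}z^{n_k})=|a_{n_k}|^2$, so a.e.\ convergence is equivalent to $\sum_k|a_{n_k}|^2<\infty$. This already yields the sufficiency half, since convergence almost everywhere is in particular convergence on a set of positive measure. (Alternatively, the Riesz--Fischer theorem produces an $L^2$ function and a lacunary maximal inequality supplies the a.e.\ convergence, but the probabilistic route is more economical.)

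For the necessity I would argue by a zero--one law. The set on which $s_N(z)$ converges is a tail event for the quasi-independent sequence $\{a_{n_k}z^{n_k}\}$, hence by Kolmogorov's zero--one law it has Lebesgue measure $0$ or $1$. Therefore convergence on a set of positive measure forces convergence almost everywhere, and the previous paragraph then gives $\sum_k|a_{n_k}|^2<\infty$. The main obstacle of the whole proof lives here and in the first step: the terms are only \emph{approximately} independent, so one must either transfer the three-series and zero--one theorems to dissociate systems (controlling the error in the factorization of moments) or carry out the reduction to ratio $\ge 3$ carefully enough that a genuine independence statement is available. This book-keeping for ratios close to $1$ is the delicate point.

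It remains to show that when $\sum_k|a_{n_k}|^2=\infty$ the series is not the Fourier series of any $g\in L^1(S^1,dz)$. The engine is the equivalence of $L^1$ and $L^2$ norms on lacunary polynomials: for $P=\sum_{|k|\le N}c_k z^{n_k}$ one has $\|P\|_2\le C\|P\|_1$, the $\Lambda(2)$ property. I would derive it from the $\Lambda(4)$ bound $\|P\|_4\le C\|P\|_2$ --- immediate from lacunarity, since $\int_{S^1}|P|^4$ counts representations $n_i+n_j=n_k+n_l$ and the gap condition forces essentially only the trivial ones --- followed by the log-convexity interpolation $\|P\|_2\le\|P\|_1^{1/3}\|P\|_4^{2/3}$. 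Granting this, suppose $\sum_k a_{n_k}z^{n_k}=S[g]$ with $g\in L^1$. The Fej\'er means $\sigma_m g$ are lacunary polynomials with $\|\sigma_m g\|_1\le\|g\|_1$ and Fourier coefficients tending to $a_{n_k}$; hence $\big(\sum_{k}|\widehat{\sigma_m g}(n_k)|^2\big)^{1/2}=\|\sigma_m g\|_2\le C\|g\|_1$ uniformly in $m$, and Fatou's lemma gives $\sum_k|a_{n_k}|^2\le C^2\|g\|_1^2<\infty$, the desired contradiction. This is the point where the $H^p$ circle of ideas (Paley's inequality) enters the classical theory, and the underlying lacunary moment computation is the common kernel shared with the first step.
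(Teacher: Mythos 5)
A preliminary remark: the paper offers no proof of this statement at all --- it is quoted verbatim from Zygmund \cite[p.203]{Zygmund} as background --- so your proposal can only be measured against the classical argument in Zygmund's Chapter V. Against that standard, your third part is correct and is essentially the classical proof: the $\Lambda(4)$ moment count (bounded number of representations $n_i+n_j=n_k+n_l$ under the gap condition), the interpolation $\|P\|_2\le\|P\|_1^{1/3}\|P\|_4^{2/3}$ giving $\|P\|_2\le C\|P\|_1$ on lacunary polynomials, and the application to the Fej\'er means $\sigma_m g$ with Fatou, is exactly the right mechanism and is sound.

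The genuine gap is in your first two steps, and it is fatal as written: the variables $z^{n_k}$ are \emph{never} independent, no matter how large the ratio, so neither Kolmogorov's three-series theorem nor his zero--one law applies. Dissociateness only gives factorization of certain joint moments, not of the joint distribution; indeed if $n_{k+1}/n_k$ is an integer then $z^{n_{k+1}}$ is a \emph{deterministic function} of $z^{n_k}$ (e.g.\ $n_k=3^k$), so your hoped-for ``reduction to ratio $\ge 3$ making a genuine independence statement available'' does not exist --- increasing the ratio makes the moment factorization better but the variables remain functions of the single variable $z$. The zero--one step is moreover circular in spirit: classically, the $0$--$1$ dichotomy for the convergence set of a lacunary series is a \emph{consequence} of the theorem you are proving (convergence on positive measure $\Rightarrow$ $\ell^2$ coefficients $\Rightarrow$ a.e.\ convergence), not an available input. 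The correct necessity argument, which your sketch is missing entirely, is second-moment on a fixed set: if the series converges on $E$ with $|E|>0$, Egorov gives $F\subset E$, $|F|>0$, on which the tail sums are uniformly small; expanding $\int_F\bigl|\sum_{N<k\le M}a_{n_k}z^{n_k}\bigr|^2\,dz$, the diagonal contributes $|F|\sum_{N<k\le M}|a_{n_k}|^2$, while the off-diagonal terms involve $\widehat{\chi_F}(n_j-n_k)$; the Riemann--Lebesgue lemma together with the lacunary structure (differences $|n_j-n_k|$ large, each value attained boundedly often) makes the off-diagonal sum at most $\tfrac12|F|\sum|a_{n_k}|^2$ once $N$ is large, forcing the tails of $\sum|a_{n_k}|^2$ to be small, hence $\ell^2$. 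For sufficiency, your parenthetical alternative (Riesz--Fischer plus a comparison of the partial sums $s_{n_k}$ with de la Vall\'ee Poussin or delayed means, which converge a.e.\ and agree with $s_{n_k}$ up to controllable error because the partial sums are constant across the gaps) is the legitimate route; the ``more economical'' probabilistic route you privileged is the one that fails.
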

Next we need the notion of dissociated polynomials. Consider the following two products:
\begin{eqnarray*}
(1+z)(1+z) &=& 1+z+z +z^2 = 1 +2z +z^2,\\
(1+z)(1+z^2) &=& 1+z+z^2+z^3.
\end{eqnarray*}
In the first case we group terms with the same power of $z$, while in the second case all the powers of $z$ in the formal expansion are distinct. In the second case we say that the polynomials $1 + z$ and $1 + z^2$ are dissociated. More generally we say that a set of trigonometric polynomials is dissociated if in the formal expansion of product of any finitely many of them, the powers of $z$ in the non-zero terms are all distinct. (see section 2 for a detailed definition).\\

Now consider the infinite Product due to F. Riesz:  (\cite[p.208]{Zygmund})
$$\prod_{k=1}^{+\infty} \Bigl(1+a_k \cos(n_k x)\Bigr),~~-1\leq a_k \leq
1,~~\frac{n_{k+1}}{n_k} \geq 3.$$
Each term of this product is non-negative and integrates to 1 with respect to the normalized Lebesgue measure on the circle group. We rewrite this product as
$$\prod_{k=1}^{+\infty} \Bigl(1+\frac{a_k}2 \bigl(z^{n_k}+z^{-n_k}\bigr) \Bigr). $$

Because of the Lacunary nature of the $n_k$'s, the polynomials
$$1+\frac{a_k}2 \bigl(z^{n_k}+z^{-n_k}\bigr),~~k=1,2,\cdots$$
are dissociated. If we expand the finite product $\prod_{k=1}^{L} \Bigl(1+\frac{a_k}2 \bigl(z^{n_k}+z^{-n_k}\bigr) \Bigr)$, we get a finite sum of the type
$$1+\sum_{\overset{k \neq 0}{k=-M}}^{M}\gamma_k z^{m_k},$$
and for the infinite product we get the series
$$\sum_{k=-\infty}^{+\infty}\gamma_k z^{m_k},$$
both sums being formal expansions of the corresponding products. Since the finite products are non-negative and integrate to $1$, they are probability densities and the corresponding probability measures converge weakly to a probability measure, say $\mu$, whose Fourier-Stieltjes series is the formal expansion of the infinite product. The main theorem about Riesz products is
\begin{Th}[{\cite[p.209]{Zygmund}}]\label{zyg2}
The Riesz product $\prod_{k=1}^{+\infty} \Bigl(1+\frac{a_k}2 \bigl(z^{n_k}+z^{-n_k}\bigr) \Bigr)$, $-1 \leq a_k \leq 1,~~\frac{n_{k+1}}{n_k} \geq 3, \forall k.$ represents a continuous measure $\mu$ on $S^1$ which is absolutely continuous or singular with respect to the Lebesgue measure on $S^1$ according as the sequence $a_k,k=1,2,\cdots$ is in $\ell^2$ or not. The finite products
$$\prod_{k=1}^{L} \Bigl(1+\frac{a_k}2 \bigl(z^{n_k}+z^{-n_k}\bigr) \Bigr),~~L=1,2,\cdots$$ converge to $\frac{d\mu}{dz}$ a.e. (dz) as $L \longrightarrow +\infty.$
\end{Th}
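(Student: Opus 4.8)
The plan is to reduce the whole dichotomy to a single quantity, the $L^2$ norm of the partial products. Write $P_L(z)=\prod_{k=1}^{L}\bigl(1+\tfrac{a_k}2(z^{n_k}+z^{-n_k})\bigr)$. The starting point is that the gap condition $n_{k+1}/n_k\ge 3$ makes the factors dissociated: every frequency $\sum_k\epsilon_k n_k$ with $\epsilon_k\in\{-1,0,1\}$ occurs exactly once in the expansion, so the Fourier coefficients multiply and
\begin{equation*}
\int_{S^1}|P_L|^2\,dz=\prod_{k=1}^{L}\Bigl(1+\frac{a_k^2}{2}\Bigr),\qquad \int_{S^1}P_L\,dz=1 .
\end{equation*}
Everything is then governed by whether this product stays bounded, and $\prod_k(1+\tfrac{a_k^2}2)$ converges \emph{if and only if} $(a_k)\in\ell^2$. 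Existence of the weak-* limit $\mu$ is already in hand from the discussion preceding the statement; what remains is to identify $\mu$ as absolutely continuous or singular and to establish a.e. convergence of $P_L$.

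Consider first the absolutely continuous case $(a_k)\in\ell^2$. Here I would exploit dissociation a second time: for $L<M$ the coefficient identity gives $\langle P_M,P_L\rangle_{L^2}=\|P_L\|_2^2$ (the separation $n_{L+1}\ge 3n_L>2\sum_{j\le L}n_j$ guarantees that no nonzero frequency of the tail factor meets the spectrum of $P_L^2$) and $\|P_M\|_2^2=\|P_L\|_2^2\prod_{L<k\le M}(1+\tfrac{a_k^2}2)$, whence
\begin{equation*}
\|P_M-P_L\|_2^2=\|P_L\|_2^2\Bigl(\prod_{L<k\le M}\bigl(1+\tfrac{a_k^2}2\bigr)-1\Bigr)\xrightarrow[L\to\infty]{}0 .
\end{equation*}
Thus $(P_L)$ is Cauchy, hence convergent, in $L^2(S^1,dz)$ to some $f\ge 0$; matching Fourier coefficients with the weak-* limit gives $d\mu=f\,dz$, so $\mu\ll dz$. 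For the a.e.\ statement one uses that $(P_L)$ is an $L^2$-bounded martingale for an appropriate filtration adapted to the lacunary frequencies, so Doob's theorem yields $P_L\to f=\tfrac{d\mu}{dz}$ a.e.

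The singular case $(a_k)\notin\ell^2$ is the heart of the matter, and I expect it to be the main obstacle. The idea is to show directly that $P_L\to 0$ a.e., which forces the absolutely continuous part of $\mu$ to vanish, leaving $\mu\perp dz$. Taking logarithms and expanding,
\begin{equation*}
\log P_L(x)=\sum_{k=1}^{L}a_k\cos(n_k x)-\frac14\sum_{k=1}^{L}a_k^2+\Bigl(\text{lacunary remainder}\Bigr),
\end{equation*}
where the remainder collects the $\cos(2n_kx)$ terms and the higher-order Taylor tail. The drift $-\tfrac14\sum a_k^2\to-\infty$, while the oscillatory pieces are themselves lacunary series; by the growth theory for such series (of the type behind Theorem \ref{thZ}) their partial sums are a.e. of smaller order than $\sum_{k\le L}a_k^2$, so $\log P_L\to-\infty$ a.e.\ and $P_L\to 0$ a.e. The delicate points, where the real work lies, are (i) controlling the first-order lacunary sum $\sum a_k\cos n_kx$ against the quadratic drift, for which one needs an a.e.\ \emph{upper bound} (a law-of-iterated-logarithm type estimate) rather than the mere divergence supplied by Theorem \ref{thZ}, and (ii) handling the Taylor remainder as $|a_k|\to 1$, where $\log(1+a_k\cos\theta)$ becomes unbounded. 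Finally, continuity of $\mu$ I would obtain from Wiener's criterion: only the indices $k$ with $n_k\le 2N$ contribute to $\sum_{|m|\le N}|\hat\mu(m)|^2$, and since there are at most $O(\log N)$ of them while each factor is at most $\tfrac32$, one gets $\sum_{|m|\le N}|\hat\mu(m)|^2\le\prod_{n_k\le 2N}(1+\tfrac{a_k^2}2)\le (2N)^{\log_3(3/2)}=o(N)$, so $\tfrac1{2N+1}\sum_{|m|\le N}|\hat\mu(m)|^2\to 0$ and $\mu$ has no atoms.
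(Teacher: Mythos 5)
You should know at the outset that the paper does not prove Theorem \ref{zyg2} at all: it is quoted as classical background with a citation to Zygmund (p.\ 209), and the paper's own later results (the Mahler-measure formula, Corollary \ref{cor3}, Proposition \ref{prop2}) are offered as refinements of it, not a proof. So your proposal must stand on its own, and parts of it do. The dissociation computations are correct: with $n_{k+1}/n_k\geq 3$ one has $2\sum_{j\leq L}n_j<3n_L\leq n_{L+1}$, all signed sums $\sum\epsilon_k n_k$, $\epsilon_k\in\{-1,0,1\}$, are distinct, and your identities $\|P_L\|_2^2=\prod_{k\leq L}(1+a_k^2/2)$, $\langle P_M,P_L\rangle=\|P_L\|_2^2$, hence the $L^2$-Cauchy property when $(a_k)\in\ell^2$, are all valid; this correctly gives $\mu\ll dz$ with density in $L^2$. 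The Wiener-criterion argument for continuity is also complete and correct (at most $O(\log N)$ indices satisfy $n_k\leq 2N$, giving $\sum_{|m|\leq N}|\hat\mu(m)|^2\leq (2N)^{\log_3(3/2)}=o(N)$).

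The gaps are in exactly the places where the theorem is hard. First, your singular case rests on the inference ``$P_L\to 0$ a.e.\ forces the absolutely continuous part of $\mu$ to vanish,'' and this is unjustified: since $\int_{S^1}P_L\,dz=1$ for every $L$, the family is not uniformly integrable, and in general a sequence of probability densities can tend to $0$ a.e.\ while its weak-$*$ limit retains a nontrivial (even full) absolutely continuous part. For Riesz products the implication is true, but the standard route to it is precisely the a.e.\ identification $P_L\to\frac{d\mu}{dz}$ --- the second assertion of the theorem --- so your plan is circular unless you prove that identification independently (classically this is done via differentiation of the distribution function of $\mu$ together with lacunary-series theory, not via the weak-$*$ limit alone). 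Second, the two ``delicate points'' you flag are the actual content of the singular half and are left unproved: you need a Salem--Zygmund-type a.e.\ upper bound for $\sum_{k\leq L}a_k\cos n_kx$ against $\sum_{k\leq L}a_k^2$ (Theorem \ref{thZ} gives only divergence, as you note), and the Taylor expansion of the logarithm fails termwise when the $a_k$ are not small --- already for $a_k\equiv\frac12$ the cubic tail of $\log(1+a_k\cos n_kx)$ is comparable to the quadratic drift, so one must instead center each factor at its true mean $\log\frac{1+\sqrt{1-a_k^2}}{2}$ and prove a strong law for the centered sums, with a separate treatment of $|a_k|$ near $1$ where the factor is unbounded below. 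Third, a lesser but real gap in the $\ell^2$ case: $(P_L)$ is not a martingale for any filtration on $(S^1,dz)$ --- the martingale analogy for Riesz products is heuristic, so Doob's theorem is unavailable. This one is repairable: $P_{L+1}-P_L=a_{L+1}\cos(n_{L+1}x)P_L(x)$ has spectrum in the annulus $\frac12 n_{L+1}\leq |m|\leq\frac32 n_{L+1}$, these annuli are pairwise disjoint under the ratio-$3$ condition, and the increments are square-summable in $L^2$, so a lacunary-block (Littlewood--Paley type) maximal inequality yields a.e.\ convergence --- but that is an argument you must actually invoke, and it still only covers the $\ell^2$ case, leaving the a.e.\ statement in the singular case dependent on the machinery of the first gap.
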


We will improve this theorem using Hardy class theory later.\\

The above account about Riesz products is based on parts of chapter V of Zygmund \cite{Zygmund}. The original four page paper of F. Riesz appeared nearly 95 years ago in 1918 \cite{Riesz}, and has led to much further work. The aim of the paper was to give a continuous function of bounded variation whose Fourier-Stieltjes coefficients do not tend to zero.\\

\paragraph{\textbf{Connection with Ergodic Theory.}} In \cite{Ledrappier} F. Ledrappier showed that Riesz products appear as maximal spectral type of a class of measure preserving transformation. We will assume that the reader is familiar with the stacking method of constructing measure preserving transformations. Consider a sequence stacks $\Sigma_n$, $n = 1, 2, \cdots$ of pairwise disjoint intervals, beginning with the unit interval as $\Sigma_1$. Each stack comes equipped with the usual linear maps among its element. Let $h_n$ be the height of $\Sigma_n$, $n = 1, 2, \cdots$ . For each $n \geq 2$, the stack $\Sigma_n$ is obtained from $\Sigma_{n-1}$ by dividing $\Sigma_{n-1}$ into two equal parts and adding a finite number, say $a_{n-1}$, of spacers on the left subcolumn. The top of the left subcolumn (after adding spacers) is mapped linearly onto the bottom of the right subcolumn, and the resulting new stack is $\Sigma_n$. If $T$ is the measure preserving transformation given by this system of stacks,  then, as shown by Ledrappier, the associated unitary operator $U_T$ has simple spectrum whose maximal spectral type,( except possibly for some discrete part), is given by the weak$*$ limit of probability measures
$$\mu_L=\prod_{n=1}^{L}\Bigl|\frac1{\sqrt{2}}\bigl(1+z^{h_n+a_n}\bigr)\Bigr|^2 dz=
\prod_{n=1}^{L}\Bigl|1+\frac1{2}\bigl(z^{h_n+a_n}+z^{-h_n-a_n}\bigr)\Bigr|^2 dz, L=1,2,\cdots$$
as $L\longrightarrow +\infty$. We write this measure formally as
$$\mu =\prod_{n=1}^{+\infty}\Bigl|\frac1{\sqrt{2}}\bigl(1+z^{h_n+a_n}\bigr)\Bigr|^2 ~~~~\eqno (A).$$

More generally, consider a rank one measure preserving transformation made of a sequence of stacks $\Sigma_n$, $n = 1, 2,\cdots$ with $h_n$ as the height of $\Sigma_n$, and $\Sigma_1$ being the unit interval. For $n \geq 2$, $\Sigma_n$ is obtained from $\Sigma_{n-1}$ by dividing $\Sigma_{n-1}$ into $m_{n-1}$ equal parts, and placing a certain number $a_j^{(n-1)}$ of spacers on the $j^{\textrm{th}}$ subcolumn, $1 \leq j \leq m_{n-1}-1$. Let $T$ denote the resulting measure preserving transformation and $U_T$ the associated unitary operator. Here again the operator $U_T$ has simple spectrum and the maximal spectral type (except possibly some discrete part) is given by the weak$*$ limit of the probability measures \cite{Host-Mela-Parreau},\cite{Nadkarni1},\cite{KlemesR}:
$$\mu_n=\prod_{k=1}^{n}\frac1{m_k}\Bigl|\Bigl(1+\sum_{j=1}^{m_{k}-1}z^{jh_k+\sum_{i=1}^{j}a_i^{(k)}}\Bigr)\Bigr|^2 dz,~~n=1,2,3,\cdots $$
We denote this weak$*$ limit $\mu$ by the infinite product:
$$\mu=\prod_{k=1}^{+\infty}\frac1{m_k}\Bigl|\Bigl(1+\sum_{j=1}^{m_{k}-1}z^{jh_k+\sum_{i=1}^{j}a_i^{(k)}}\Bigr)^2\Bigr|  \eqno (B)$$

Ornstein \cite{ornstein} has constructed a family of rank one measure preserving maps which are mixing. Bourgain \cite{bourgain}
has shown that almost all of these rank one map have singular spectrum. It is not known if there exists a rank one measure preserving map whose maximal spectral type has a part which is absolutely continuous with respect to the Lebesgue measure on $S^1$. This question is naturally related to a question of Banach (in The Scottish Book) which asks if there is a measure preserving transformation $T$ on the Real line (with Lebesgue measure) which admits a function $f$ such that $f \circ T^n$, $n = 1, 2, \cdots$ are orthogonal and span $L^2(\R).$

Let $\Sigma_n,$ $n = 1, 2,\cdots$ and $T$ be as above, and let $\phi$ be a function of absolute value one which is constant on interval of $\Sigma_n$ except the top piece, $n = 1, 2, \cdots$ . It is known that the unitary operator $V_{\phi} = \phi\cdot U_T$ also has multiplicity one and its maximal spectral  type  is  continuous  whenever $\phi$  is  not  a  coboundary.	It  is  given  by  the weak$*$ limit of a sequence of probability measures given by:	

$$\mu_L=\prod_{n=1}^{L}\Bigl|P_k(z)|^2 dz,$$
where $P_k'$s are polynomials of the type:
$$P_k(z)=c_0^{(k)}+\sum_{j=1}^{m_k-1}c_j^{(k)} z^{jh_k+\sum_{i=1}^{j}a_i^{(k)}},~~~~ \sum_{j=0}^{m_k-1}|c_j^{(k)}|^2=1.$$
The constants $c_j^{(k)}$ are determined by the $m_k$'s  and the function $\phi$. We may write	 this weak$*$ limit as
$$ \mu_{\phi}=\prod_{n=1}^{+\infty}\Bigl|P_k(z)|^2 \eqno (C).$$

Note that in all the products (A), (B), and (C) there is no lacunarity condition imposed on the powers of $z$ from outside. The gap between two consecutive nonzero terms of the polynomials are determined by parameters of construction, and need not be lacunary. In the rest of this paper we will, for most part, dispense with the dynamical origin of the measures of the type (A), (B) and (C) and discuss a larger class of measures, called generalized Riesz product, which include these measures.

\section{Generalized Riesz Products and their  Weak Dichotomy.}

 In this section we introduce generalized Riesz products and derive a weak dichotomy result
about  infinite product of polynomials associated to it. This also yields conditions for absolute continuity of the generalized Riesz product.
\begin{Def}\label{def1}
Let $P_1, P_2, \cdots,$ be a sequence of trigonometric polynomials such that
\begin{enumerate}[(i)]
\item for any finite sequence $i_1< i_2 < \cdots < i_k$ of natural numbers
$$\bigintss_{S^1}\Bigl| (P_{i_1}P_{i_2}\cdots P_{i_k})(z)\Bigr|^2dz = 1,$$
where $S^1$ denotes the circle group and $dz$ the normalized Lebesgue measure on $S^1$,
\item for any infinite sequence $i_1 < i_2 < \cdots $ of natural numbers the weak limit of the measures
$\mid (P_{i_1}P_{i_2}\cdots P_{i_k})(z)\mid^2dz, k=1,2,\cdots $ as $k\rightarrow \infty$ exists,
\end{enumerate}
then the measure $\mu$ given by the weak limit of $\mid (P_1P_2\cdots P_k)(z)\mid^2dz $ as $k\rightarrow \infty$
is called generalized Riesz product of the polynomials $\mid P_1\mid^2,
 \mid P_2\mid^2,\cdots$ and denoted by
  $$\displaystyle  \mu =\prod_{j=1}^\infty \bigl| P_j\bigr|^2  \eqno (1.1).$$

\end{Def}

For an increasing sequence $k_1 < k_2 < \cdots $ of natural numbers the product \linebreak $\prod_{j=1}^\infty |P_{k_j}|^2$
makes sense as the weak limit of probability measures \linebreak $| (P_{k_1}P_{k_2}\cdots P_{k_{n}})(z)|^2dz$ as $n\rightarrow \infty$. It depends on the sequence $k_1 < k_2\cdots$, and  called a subproduct of the given generalized Riesz product.

 Since the object under consideration is the generalized Riesz product \linebreak $\prod_{j=1}^\infty| P_j|^2$, without loss of generality we assume that the polynomials $P_j, j =1,2,\cdots$ are analytic with positive constant term. Their domain of definition will mainly be the circle group, but with option to look at then as functions on the complex plane. Since $\ds \int_{S^1}| P_j|^2(z)dz =1$, the sum of the squares of the absolute values of coefficients of
 $P_j$ is one, and so each coefficient of $P_j$ is at most one in absolute value. Let $a_0^{(j)}$
 denote the constant term of $P_j$, which is positive by assumption. The sequence of products $\prod_{j=1}^na_0^{(j)}, n=1,2,\cdots$ is non-increasing, and so has a limit which is either zero or some positive constant which can be at most 1. The case when this constant is one is obviously the trivial case when each $P_j$ is the constant 1.

 Consider the sequence of polynomials $ S_n \stackrel{\textrm {def}}{=} \prod_{j=1}^nP_j, n=1,2,\cdots$ (without the absolute value squared). For each $n$, let  $b_0^{(n)} \stackrel{\textrm {def}}{=} \prod_{j=1}^n a_0^{(j)}$ denote the constant term of $S_n$. Write $ b =\lim_{n\rightarrow \infty}b_0^{(n)}$. We have the following  weak dichotomy theorem for generalized Riesz products.

 \begin{Th}\label{th1}
 If $b=0$, the  sequence of polynomials $S_n =\prod_{i=1}^nP_i$, $i =1,2,\cdots$ converges to zero weakly in $L^2(S^1,dz)$.  If $b$ is positive it converges in $L^1(S^1, dz)$ (and in $H^1$) norm to a non-zero function $f$ which is also in $H^2$ with $H^2$ norm at most 1, $\log(|f|)$ has finite integral.
\end{Th}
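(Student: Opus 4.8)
The engine of the proof is the normalization built into Definition~\ref{def1}. Each $S_n=\prod_{j=1}^n P_j$ is analytic with $\int_{S^1}|S_n|^2\,dz=1$, so the sequence lives on the unit sphere of $H^2$, and its constant term is $\widehat{S_n}(0)=b_0^{(n)}$, a non-increasing sequence in $(0,1]$ decreasing to $b$. For $m>n$ I would write $S_m=S_nQ$ with $Q=\prod_{j=n+1}^m P_j$; condition (i) applied to the block $\{n+1,\dots,m\}$ gives $\|Q\|_2=1$, while $\widehat{Q}(0)=b_0^{(m)}/b_0^{(n)}$ is real and positive. Hence
\begin{equation*}
\|Q-1\|_2^2=\|Q\|_2^2-2\,\widehat{Q}(0)+1=2\Bigl(1-\tfrac{b_0^{(m)}}{b_0^{(n)}}\Bigr),
\end{equation*}
and this single identity drives both cases.

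Suppose first $b>0$. Then $b_0^{(m)}/b_0^{(n)}\to 1$ as $n\le m\to\infty$, so the identity above together with Cauchy--Schwarz yields
\begin{equation*}
\|S_m-S_n\|_1=\|S_n(Q-1)\|_1\le\|S_n\|_2\,\|Q-1\|_2=\sqrt{2\Bigl(1-\tfrac{b_0^{(m)}}{b_0^{(n)}}\Bigr)}\longrightarrow 0 .
\end{equation*}
Thus $(S_n)$ is Cauchy in $L^1(S^1,dz)$ and converges there to some $f$; since each $S_n\in H^1$ and $H^1$ is closed in $L^1$, the convergence is in $H^1$ and $f\in H^1$. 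The zeroth coefficient passes to the limit, $\widehat{f}(0)=\lim_n b_0^{(n)}=b>0$, so $f\neq 0$. Boundedness of $(S_n)$ in $H^2$ gives a subsequence converging weakly in $H^2$ to a limit of norm at most $1$; this weak limit has the same Fourier coefficients as the $L^1$ limit, whence $f\in H^2$ with $\|f\|_2\le 1$. Finally, a nonzero $H^1$ function satisfies $\log|f|\in L^1(S^1)$, with $\int_{S^1}\log|f|\,dz\ge\log|\widehat{f}(0)|>-\infty$ by Jensen's inequality for Hardy spaces; this is the last assertion.

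Now suppose $b=0$. Since $(S_n)$ is bounded in $L^2$, weak convergence to $0$ is equivalent to $\widehat{S_n}(p)\to 0$ for every $p\ge 0$, and the case $p=0$ is exactly $b_0^{(n)}\to 0$. For the rest I would show that every weak subsequential limit $g$ vanishes, since a bounded sequence all of whose weak limit points are $0$ converges weakly to $0$. Such a $g$ lies in $H^2$ with $\|g\|_2\le 1$; moreover, writing $S_{n_k}=S_N Q_k$ with $Q_k=\prod_{N<j\le n_k}P_j$ and passing to a further subsequence $Q_k\rightharpoonup h_N$, weak continuity of multiplication by the fixed bounded factor $S_N$ gives $g=S_N h_N$ with $\widehat{h_N}(0)=b/b_0^{(N)}=0$ for every $N$. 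Taking weak limits of $|S_N Q_k|^2\,dz$ also yields the useful identity $d\mu=|S_N|^2\,d\mu^{(N)}$, where $\mu^{(N)}=\prod_{j>N}|P_j|^2$ is the tail product.

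The main obstacle is precisely this $b=0$ case. When the $P_j$ are dissociated the cross terms vanish, $\langle S_m,S_n\rangle=\int_{S^1}|S_n|^2 Q\,dz=\widehat{Q}(0)=b_0^{(m)}/b_0^{(n)}$, and letting $m\to\infty$ along the subsequence shows $g\perp S_n$ for all $n$, hence $\|g\|_2^2=\lim_k\langle g,S_{n_k}\rangle=0$ and $g=0$. In the general setting there is no lacunarity, so $\int_{S^1}|S_n|^2\bigl(Q-\widehat{Q}(0)\bigr)\,dz$ need not vanish: $|S_n|^2$ may correlate with the positive frequencies of $Q$, and condition (i) controls only $\int_{S^1}|S_n|^2|Q|^2\,dz$, not $\int_{S^1}|S_n|^2 Q\,dz$. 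Converting the structural relation $g=S_N h_N$ with $\widehat{h_N}(0)=0$ into $g=0$ — equivalently, forcing the low-frequency coefficients $\widehat{h_N}(p)$ of the tail limits to vanish — is the delicate step, and I expect it to be where the $H^p$ and outer/inner-function analysis of the paper has to be brought to bear.
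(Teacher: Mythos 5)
Your $b>0$ argument is correct and is essentially the paper's proof of that half: the same identity $\|Q-1\|_2^2=2\bigl(1-b_0^{(m)}/b_0^{(n)}\bigr)$, the same Cauchy--Schwarz step $\|S_m-S_n\|_1\le\|S_n\|_2\|Q-1\|_2$, $L^1$-Cauchyness, and then $\|f\|_2\le 1$ (the paper uses a.e.\ convergence of a subsequence plus Fatou where you use weak compactness in $H^2$ --- both fine) and the standard Hardy-space fact for $\log|f|$. The genuine gap is exactly where you flagged it, the $b=0$ case; but your diagnosis of what is needed to close it is off. The paper uses neither dissociation nor any outer/inner-function or $H^p$ machinery there --- only an elementary induction on Fourier coefficients, and the two pieces of information your own analysis already identified as available: every coefficient of an analytic tail block $Q$ is at most $1$ in modulus (since $\|Q\|_2=1$), and its constant term $\widehat{Q}(0)$ can be made small.

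Concretely: let $S_{k_n}\rightharpoonup g$ be a weak subsequential limit. Since $b_0^{(n)}\to 0$ while each $b_0^{(k_n)}>0$, pass to a further subsequence so that the constant term of $S_{k_{n+1}}/S_{k_n}$ tends to $0$. Negative coefficients of $g$ vanish by analyticity and $\widehat{g}(0)=b=0$. Inductively assume $\widehat{g}(j)=0$ for $0\le j<l$. Given $\epsilon>0$ choose $m$ so large that $|\widehat{S_{k_m}}(j)|<\epsilon$ for $0\le j<l$ and the constant term of $S_{k_{m+1}}/S_{k_m}$ is below $\epsilon$. For $n>m$ write $S_{k_n}=S_{k_m}Q$ with $Q=\prod_{j=k_m+1}^{k_n}P_j$; then $|\widehat{Q}(i)|\le 1$ for all $i$, and since each $a_0^{(j)}\in(0,1]$ one has $\widehat{Q}(0)=\prod_{j=k_m+1}^{k_n}a_0^{(j)}\le\prod_{j=k_m+1}^{k_{m+1}}a_0^{(j)}<\epsilon$. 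The convolution formula $\widehat{S_{k_n}}(l)=\sum_{i=0}^{l}\widehat{S_{k_m}}(i)\,\widehat{Q}(l-i)$ now gives $|\widehat{S_{k_n}}(l)|\le l\epsilon+\epsilon=(l+1)\epsilon$ uniformly in $n>m$, whence $\widehat{g}(l)=0$; induction kills $g$, and your remark that a bounded sequence with only the zero weak limit point converges weakly to $0$ finishes the case. Note this sidesteps your relation $g=S_Nh_N$ with $\widehat{h_N}(0)=0$ entirely: instead of forcing low frequencies of the tail limits $h_N$ to vanish (which, as you correctly observe, condition (i) does not directly control), one bounds the low frequencies of $S_{k_n}$ itself, frequency by frequency --- the point being that a frequency-$l$ coefficient of the product can only be built from frequencies $\le l$ of $S_{k_m}$, each of which is paired either with a tail coefficient of modulus $\le 1$ (and is itself small by induction) or with the small constant term $\widehat{Q}(0)$.
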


\begin{proof}
 Assume that   $b =0 $. We show that the sequence $S_n, n=1,2,\cdots$ has zero
 weak limit as functions in  $L^2(S^1, dz)$. Assume that a subsequence $S_{k_n}, n=1,2,\cdots$ converges weakly
 $f \in L^2(S^1,dz)$. We show that $f$ is the zero function. By choosing a further subsequence if necessary we can assume without  any loss that the constant term of $\frac{S_{k_{n+1}}}{S_{k_n}}, n=1,2,\cdots$ goes to zero as $n\rightarrow \infty$.  Since $b=0$, the zeroth Fourier coefficient of $f$ is zero. Since each $S_n$ is an analytic
 trigonometric polynomial, the negative Fourier coefficients of $f$ are all zero.
 Assume now that for $0\leq  j < l$,  $\displaystyle b_j = \int_{S^1}z^{-j}f(z)dz =0$. Then, given $\epsilon$, for large enough $m$, $\displaystyle \Bigl| \int_{S^1} z^{-j}S_{k_m} dz \Bigr| < \epsilon$, for $0 \leq j< l$, and moreover the constant term of $\frac{S_{k_{m+1}}}{S_{k_m}}$
 is less than $\epsilon $. For $n > m$,
  $$\prod_{j=1}^{k_n}P_i = \prod_{j=1}^{k_m}P_j\prod_{j=k_m+1}^{k_n}P_j.$$
  Since $P_j$'s are one sided trigonometric polynomials, it is easy to see from this that $\displaystyle \Bigl| \int_{S^1}z^{-l}S_{k_n}(z)dz\Bigr| \leq (l+1)\epsilon$. Since this holds for all
  $n > m$ we see that $\displaystyle \int_{S^1}z^{-l}f(z)dz =0$. Induction completes the proof.

 Assume now that $b > 0$.
 Then $a_0^{(n)}$ as well as $ \prod_{j=m+1}^n a_0^{(j)}, m < n,$ converge to 1 as $m,n \rightarrow \infty$. Since $L^2(S^1, dz)$ norm of all the finite  products is one,
 \begin{eqnarray}
  P_n, \frac{S_n}{S_m} \rightarrow 1 {~~\textrm{in}}~~ L^2(S^1, dz),~~ m < n,~~  {~~\textrm{as}}~~ m,n \rightarrow \infty.
  \end{eqnarray}
   Moreover by Cauchy-Schwarz inequality
 $$\Bigl\| S_n - S_m\Bigr\|_1 = \Bigl\| S_m\Bigl(1 - \prod_{m+1}^nP_i\Bigr)\Bigr\|_1 \leq \Bigl\| S_m\Bigr\|_2 \Bigl\|1- \prod_{j=m+1}^nP_j\Bigr\|_2$$
$$\rightarrow 0 ~~{\rm{as}}~~ m, n \rightarrow \infty.$$

\noindent{}Thus the sequence of analytic polynomials $S_n, n=1,2,\cdots$ converges in $L^1(S^1, dz)$ to a function which we denote by $f$, and view it also as a function in the Hardy class $H^1$. A subsequence of $S_n, n=1,2,\cdots$ converges to $f$ a.e (with respect to the Lebesgue measure of $S^1)$, whence, over the same subsequence $S^2_n, |S_n|^2, n=1,2,\cdots$ converge to $f^2$ and
$|f|^2$, respectively. Since $\|S_n\|_2 =1, n =1,2,\cdots$, by Fatou's lemma we conclude
that $f$ is square integrable with $L^2(S^1, dz)$ norm at most 1. Thus $f$ is in $H^2$, and $\log \mid f\mid$ has finite integral.\\
\end{proof}

We do not  know if the $L^2(S^1, dz)$ norm of $f$ is 1, equivalently, if $S_n, n=1,2,\cdots$  converges to $f$ in
$L^2(S^1,dz)$. We give some sufficient conditions under which this holds.  Let $S_n = \sum_{j=0}^{m_n}b_j^{(n)}z^j$,
where $m_n$ is the degree of the trigonometric polynomial $S_n$. Now
$$b_j^{(n)} = \int_{S^1}z^{-j}S_n(z)dz \rightarrow \int_{S^1}z^{-j}f(z)dz \stackrel{\textrm{def}}{=} b_j.$$
The series $\sum_{j=0}^\infty b_jz^j$ is the Fourier series of $f$ and we call this series the formal expansion of
$ \prod_{j=1}^\infty P_i$. Since $b$ is positive, the infinite product $ \prod_{j=n+1}^\infty a_0^{(j)}$ is also positive,
so the  infinite product $ \prod_{j=n+1}^\infty P_j$ has a formal expansion which we denote by
$\sum_{j=0}^\infty c_j^{(n)}z^j$. Note that $c_0^{(n)} =\prod_{j=n+1}^\infty a_{0}^{(j)} \longrightarrow 1$
as $n\longrightarrow \infty$, as a result $\sum_{j=1}^\infty|c_j^{(n)}|^2 \leq 1- (c_0^{(n)})^2 \longrightarrow 0$ as
$ n \rightarrow \infty$.\\

At this point, let us recall the following important notion in the Riesz product theory.

\begin{Def}\label{def2}  Finitely many trigonometric polynomials $q_0,q_1,\cdots,q_n$,\linebreak $q_j=\sum_{i=-N_j}^{N_j} d_i^{(j)}z^i$,$j=0,1,2,\cdots,n$ are said to be dissociated if in their product \linebreak $q_0(z)q_1(z)\cdots q_n(z)$, (when expanded formally, i.e., without grouping terms or canceling identical terms with opposite signs), the powers $i_0+i_1+\cdots+i_n$ of $z$ in non-zero terms
$$d_{i_0}^{(0)}d_{i_1}^{(1)}\cdots d_{i_n}^{(n)}z^{i_0+i_1+\cdots+i_n}$$ are all distinct.\\

A sequence $q_0,q_1,\cdots,$ of trigonometric polynomials is said to be dissociated if for each $n$ the polynomials
$q_0,q_1,\cdots,q_n$ are dissociated.
\end{Def}

Suppose now that the polynomials $P_1, P_2, \cdots$ (without the squares) appearing in generalized Riesz product 1.1 are dissociated. Then, whenever $b_j^{(n)}$ is a non-zero coefficient in the expansion of $S_n$, $ b_j^{(l)} = b_j^{(n)}\frac{b_0^{(l)}}{b_0^{(n)}}$ for all $l \geq n$. Thus, if the polynomials $P_j, j =1,2,\cdots$ are dissociated, then we see on letting $ l \rightarrow \infty$ that $b_j = b_{j}^{(n)}c_0^{(n)}$, provided $b_j^{(n)} \neq 0$. We therefore have for any $n$
 $$\sum_{j=0}^{m_n}| b_j|^2 \geq \sum_{j=1}^{m_n}\mid b_j^{(n)}\mid^2 (c_0^{(n)})^2 = (c_0^{(n)})^2 \longrightarrow 1$$ as $ n\rightarrow \infty$. Thus $ f$ has $L^2(S_1,dz)$ norm 1.
 We have proved:

 \begin{Th}\label{th2}
 If the polynomials $P_n, n=1,2,\cdots$ are dissociated and
 b is positive then the partial products $S_n, n=1,2,\cdots$ converge in $ H^2$ to a non-zero function
 $f$ and the generalized product $\prod_{j=1}^\infty |P_j|^2$ is the measure $|f|^2dz$.
 Further, $\ds \int_{S^1}\log (| f(z)|) dz$ is finite.
 \end{Th}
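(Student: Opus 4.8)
The groundwork for this theorem is already supplied by Theorem \ref{th1}: since $b > 0$, the partial products $S_n$ converge to a non-zero $f$ in $L^1$ (equivalently in $H^1$), with $f \in H^2$, $\|f\|_{H^2} \leq 1$, and $\int_{S^1}\log|f|\,dz$ finite. Thus the last assertion is immediate, and the two genuinely new claims are that $S_n \to f$ in the $H^2$ norm and that the generalized Riesz product $\prod_j |P_j|^2$ equals $|f|^2\,dz$. My plan is to first pin down $\|f\|_2 = 1$ via dissociation, then bootstrap the already-known weak convergence up to $H^2$ norm convergence, and finally pass to squared moduli to identify the measure.

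The key quantitative input, and the main obstacle, is the dissociation bookkeeping that forces $\|f\|_2 = 1$. Writing $S_n = \sum_j b_j^{(n)} z^j$ and factoring $S_l = S_n \cdot \prod_{j=n+1}^l P_j$ for $l > n$, dissociation of $P_1,\dots,P_l$ says that in the full formal expansion each power of $z$ occurs in at most one monomial. In particular each power already occurs at most once inside $S_n$, and a power $j$ with $b_j^{(n)} \neq 0$ can be produced in $S_l$ only by multiplying the $z^j$-term of $S_n$ by the constant term $b_0^{(l)}/b_0^{(n)}$ of the tail; no competing pairing lands on that power, so no cancellation or reinforcement occurs. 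This gives $b_j^{(l)} = b_j^{(n)}\,b_0^{(l)}/b_0^{(n)}$ on the support of the coefficients of $S_n$, and letting $l \to \infty$ yields $b_j = b_j^{(n)} c_0^{(n)}$ there, with $c_0^{(n)} = \prod_{j>n} a_0^{(j)} \to 1$. Summing over this support, $\|f\|_2^2 = \sum_j |b_j|^2 \geq (c_0^{(n)})^2 \sum_j |b_j^{(n)}|^2 = (c_0^{(n)})^2 \to 1$, and combined with $\|f\|_2 \leq 1$ from Theorem \ref{th1} this forces $\|f\|_2 = 1$. Being certain that dissociation rules out every competing contribution to a live coefficient is where all the care goes; the remaining steps are soft.

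To upgrade to $H^2$ convergence, I would observe that $\{S_n\}$ is bounded in $L^2$ (each has norm one) and that its Fourier coefficients $b_j^{(n)}$ converge to $b_j$; testing against the dense span of the monomials then shows $S_n \rightharpoonup f$ weakly in $L^2$. Since in addition $\|S_n\|_2 = 1 = \|f\|_2$, expanding $\|S_n - f\|_2^2 = \|S_n\|_2^2 - 2\,\re\langle S_n, f\rangle + \|f\|_2^2$ and using $\langle S_n, f\rangle \to \|f\|_2^2$ gives $\|S_n - f\|_2 \to 0$. As all $S_n$ and $f$ lie in $H^2$, this is precisely convergence in $H^2$.

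Finally, to identify the measure, $L^2$ convergence propagates to the squared moduli: from the pointwise bound $\bigl||S_n|^2 - |f|^2\bigr| \leq |S_n - f|\,(|S_n| + |f|)$ and Cauchy--Schwarz, $\bigl\||S_n|^2 - |f|^2\bigr\|_1 \leq \|S_n - f\|_2\,(\|S_n\|_2 + \|f\|_2) \to 0$, so $|S_n|^2\,dz \to |f|^2\,dz$ in total variation and hence weakly. Since by Definition \ref{def1} the generalized Riesz product $\mu$ is the weak limit of $|S_n|^2\,dz$, we conclude $\mu = |f|^2\,dz$, which together with the preceding steps completes the proof.
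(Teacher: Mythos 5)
Your proposal is correct and takes essentially the same route as the paper: the dissociation bookkeeping giving $b_j^{(l)} = b_j^{(n)}\,b_0^{(l)}/b_0^{(n)}$ on the support of $S_n$, hence $b_j = b_j^{(n)} c_0^{(n)}$ and $\|f\|_2^2 \geq (c_0^{(n)})^2 \to 1$, is exactly the computation the paper carries out just before stating Theorem~\ref{th2}. The remaining steps you spell out (weak $L^2$ convergence plus $\|S_n\|_2 = \|f\|_2 = 1$ yields $H^2$ convergence, and then $\bigl\| |S_n|^2 - |f|^2 \bigr\|_1 \to 0$ identifies $\mu = |f|^2\,dz$) are precisely what the paper leaves implicit, having earlier noted that $\|f\|_2 = 1$ is equivalent to $L^2$ convergence of the $S_n$.
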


  If we replace the condition that the polynomials $P_n, n=1,2,\cdots$ are dissociated by the condition that coefficients of the polynomials $P_n, n=1,2,\cdots$ are all non-negative, then it is easy to verify that for $0 \leq k \leq m_n$,
  $$ b_k \geq c_0^{(n)}b_k^{(n)}+ b_0^{(n)}c_k^{(n)},$$
  whence
  $$\sum_{k=0}^\infty\mid b_k\mid^2 \geq (\sum_{k=0}^{m_n}\mid b_k^{(n)}\mid^2)\mid c_0^{(n)}\mid^2=1\cdot \mid c_0^{(n)}\mid^2 \rightarrow 1,$$ as $n \rightarrow \infty$. Thus, if the coefficients of all the $P_n, n=1,2,\cdots$ are non-negative, and if $b =b_0 > 0$, we necessarily have convergence of $S_n, n=1,2,\cdots $ in $H^2$.\\

  We continue with the assumption that $b$ is positive, but no more assume that the polynomials $P_n, n=1,2,\cdots$ are dissociated or have non-negative coefficients. Fix $n$, and let $1 \leq j \leq m_n$, then
  $$\sum_{j=0}^\infty b_jz^j = \Bigl(\sum_{i=0}^{m_n}b_i^{(n)}z^i\Bigr) \Bigl(\sum_{k=0}^\infty c_k^{(n)}z^k\Bigr).$$
 This gives any $j \geq 0$,
$$ b_j = b_j^{(n)}c_0^{(n)} + \sum_{i=0}^{j-1}b_i^{(n)}c_{j-i}^{(n)}.$$
Hence, for any $j \geq 1$,
$$\mid b_j - b_j^{(n)}c_0^{(n)}\mid^2 \leq \Bigl(\sum_{i=0}^{j-1}\mid b_i^{(n)}\mid^2\Bigr)\Bigl(\sum_{i=0}^{j-1}\mid c^{(n)}_{j-i}\mid^2\Bigl)\leq \sum_{j=1}^\infty \mid c^{(n)}_j\mid^2 $$
  $$\leq 1 - (c_0^{(n)})^2  \longrightarrow 0 ~~{\rm{as}} ~~ n\rightarrow \infty$$
  Assume now that $m_n(1-c_0^{(n)}) \longrightarrow 0$ as $n\rightarrow \infty$
  Then $\sum_{j=0}^{m_n}\mid b_j - b_j^{(n)}c_0^{(n)}\mid^2  \rightarrow 0$ as $n\rightarrow \infty.$
Another use of the assumption that $m_n(1-c_0^{(n)}) \longrightarrow 0$ as $n\rightarrow 0$ allows us to conclude that
$\sum_{j=0}^{m_n}| b_j - b_j^{(n)}|^2 \longrightarrow 0$ as $n\rightarrow \infty$. Since $\sum_{j=1}^{m_n}| b_j^{(n)}|^2 =1$, we conclude that $\sum_{j=1}^\infty | b_j|^2 = 1$, so that $L^2(S^1,dz)$ norm of $f$ is one and $S_n, n=1,2,\cdots$ converges to $f$ in $H^2$. We have proved:
\begin{Th}\label{th3}
If $b$ is positive and $m_n(1 -c_0^{(n)}) \longrightarrow 0$ as $n\rightarrow \infty$ then
$S_n \rightarrow f$ in $H^2$ and $|f|^2dz$ is the generalized Riesz product $\prod_{j=1}^\infty|P_j|^2$.
Moreover $\log \big(|f|\big)$ has finite integral.
\end{Th}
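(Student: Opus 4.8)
The plan is to convert the coefficient estimates already assembled in the discussion preceding the statement into genuine $H^2$ convergence, the one new ingredient being the hypothesis $m_n(1-c_0^{(n)})\to 0$. Throughout I would write $S_n=\sum_{j=0}^{m_n}b_j^{(n)}z^j$ and let $\sum_j b_j z^j$ be the Fourier series of the $H^1$ limit $f$ furnished by Theorem \ref{th1} (recall that $b>0$ already gives $f\neq 0$, $f\in H^2$, $\|f\|_2\le 1$, and $\log|f|\in L^1$). At the level of formal expansions the factorization of $\prod_{j\ge 1}P_j$ as $S_n$ times the tail product $\prod_{j>n}P_j=\sum_k c_k^{(n)}z^k$ yields the convolution identity $b_j=\sum_{i=0}^{j}b_i^{(n)}c_{j-i}^{(n)}$, hence $b_j-b_j^{(n)}c_0^{(n)}=\sum_{i=0}^{j-1}b_i^{(n)}c_{j-i}^{(n)}$. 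A single application of Cauchy--Schwarz, using $\sum_i|b_i^{(n)}|^2=\|S_n\|_2^2=1$ and $\sum_{k\ge 1}|c_k^{(n)}|^2\le 1-(c_0^{(n)})^2$, bounds each term uniformly in $j\ge 1$ by $|b_j-b_j^{(n)}c_0^{(n)}|^2\le 1-(c_0^{(n)})^2$.

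Next I would sum this over $j$. Since the $j=0$ term vanishes (as $b_0=b_0^{(n)}c_0^{(n)}$) and there are exactly $m_n$ remaining indices, $\sum_{j=0}^{m_n}|b_j-b_j^{(n)}c_0^{(n)}|^2\le m_n\bigl(1-(c_0^{(n)})^2\bigr)\le 2m_n(1-c_0^{(n)})$, which tends to $0$ precisely by the hypothesis. This is the step where that hypothesis is indispensable, and it is the only real obstacle in the argument: each summand is merely $O(1-c_0^{(n)})$, yet the number of summands grows like $m_n$, so without the balancing condition the total need not vanish. Replacing $b_j^{(n)}c_0^{(n)}$ by $b_j^{(n)}$ then costs at most $(1-c_0^{(n)})^2\sum_j|b_j^{(n)}|^2=(1-c_0^{(n)})^2\to 0$, and therefore $\sum_{j=0}^{m_n}|b_j-b_j^{(n)}|^2\to 0$ as well.

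From here the norm identity is immediate. Viewing $(b_j^{(n)})_{j\le m_n}$ and $(b_j)_{j\le m_n}$ as vectors in $\ell^2$, the reverse triangle inequality together with $\sum_{j\le m_n}|b_j^{(n)}|^2=1$ forces $\sum_{j\le m_n}|b_j|^2\to 1$; since $m_n\to\infty$ (the case of eventually constant degree being the trivial one in which $f$ is itself a polynomial) and these partial sums increase to $\|f\|_2^2$, I conclude $\|f\|_2=1$. I would then upgrade weak convergence to norm convergence in $H^2$: the coefficientwise convergence $b_j^{(n)}\to b_j$ together with the uniform bound $\|S_n\|_2=1$ gives $S_n\rightharpoonup f$ weakly, and combined with $\|S_n\|_2\to\|f\|_2$ this yields $\|S_n-f\|_2^2=\|S_n\|_2^2-2\re\langle S_n,f\rangle+\|f\|_2^2\to 0$. (Equivalently, one checks directly that $\langle S_n,f\rangle=\sum_{j\le m_n}b_j^{(n)}\overline{b_j}\to\sum_j|b_j|^2=1$.)

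Finally, to identify the measure I would note $\big\||S_n|^2-|f|^2\big\|_1\le\|S_n-f\|_2\,(\|S_n\|_2+\|f\|_2)\le 2\|S_n-f\|_2\to 0$, so $|S_n|^2\,dz\to|f|^2\,dz$ in total variation and a fortiori in the weak$*$ sense; as the generalized Riesz product is by definition the weak$*$ limit of $|S_n|^2\,dz$, it equals $|f|^2\,dz$. The finiteness of $\int_{S^1}\log|f|\,dz$ is inherited verbatim from Theorem \ref{th1}, $f$ being a nonzero $H^2$ function. In sum, the whole proof is linear algebra in $\ell^2$ plus the standard ``weak $+$ convergence of norms $\Rightarrow$ strong'' fact in Hilbert space; the only place requiring genuine care is the summation step, where the condition on $m_n(1-c_0^{(n)})$ does all the work.
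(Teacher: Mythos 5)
Your proof is correct and follows essentially the same route as the paper: the convolution identity $b_j = b_j^{(n)}c_0^{(n)} + \sum_{i=0}^{j-1} b_i^{(n)} c_{j-i}^{(n)}$, the Cauchy--Schwarz bound $|b_j - b_j^{(n)}c_0^{(n)}|^2 \le 1-(c_0^{(n)})^2$, summation over the $m_n$ nontrivial indices using the hypothesis $m_n(1-c_0^{(n)})\to 0$, and the conclusion $\|f\|_2=1$ forcing $H^2$ convergence. Your small refinements --- noting that replacing $b_j^{(n)}c_0^{(n)}$ by $b_j^{(n)}$ needs only $c_0^{(n)}\to 1$ (where the paper invokes the hypothesis a second time), and spelling out the weak-convergence-plus-norms step and the total-variation identification of $|f|^2\,dz$ --- merely fill in details the paper leaves implicit.
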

Our calculus can be interpreted as follows. Put $$B=(b_j)_{j=0}^{+\infty},~~ B^{(n)}=(b_0^{(n)},b_1^{(n)},\cdots,b_{m_n}^{(n)},0,0,\cdots ) {\textrm{~~and~~}}
 C^{(n)}=(c_j^{(n)})_{j=0}^{+\infty},$$
then
  $$\widehat{B}(z)=\sum_{j=0}^\infty b_jz^j = \Bigl(\sum_{i=0}^{m_n}b_i^{(n)}z^i\Bigr)\Bigl(\sum_{k=0}^\infty c_k^{(n)}z^k\Bigr)=
  \widehat{B^{(n)}}(z)\widehat{C^{(n)}}(z)=\widehat{{B^{(n)}*C^{(n)}}}(z).$$
Therefore
$$\Bigl\|B-B^{(n)}\Bigr\|_2=\Bigl\|{B^{(n)}*C^{(n)}}-B^{(n)}\Bigr\|_2=
\Bigl\|B^{(n)}*\Bigl(C^{(n)}-(1,0,0,\cdots)\Bigr)\Bigr\|_2.$$
Hence
$$\Bigl\|B-B^{(n)}\Bigr\|_2 \leq \Bigl\|B^{(n)}\Bigr\|_1
\Bigl\|C^{(n)}-(1,0,0,\cdots)\Bigr\|_2.$$
It follows under our condition $(m_n(1 -c_0^{(n)}) \longrightarrow 0$ as $n\rightarrow \infty$) that
$$\Bigl\|B^{(n)}\Bigr\|_1
\Bigl\|C^{(n)}-(1,0,0,\cdots)\Bigr\|_2 \longrightarrow 0.$$
\begin{Cor}\label{cor1}
If $b > 0$ then there is always a subproduct  $\prod_{k=1}^\infty P_{n_k}$ for which the condition of the above theorem is satisfied, so that if $b > 0$ holds, then a subproduct $\prod_{k=1}^\infty \mid P_k(z)\mid^2 $ has the same null sets as Lebesgue measure.
\end{Cor}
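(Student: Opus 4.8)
The plan is to produce the subsequence $n_1 < n_2 < \cdots$ by a direct inductive construction and then simply invoke Theorem~\ref{th3} on the resulting subproduct. First I would record that any subproduct $\prod_{i=1}^\infty |P_{n_i}|^2$ is again a generalized Riesz product in the sense of Definition~\ref{def1}: finite subproducts of the subsequence are finite subproducts of the original family, so each has $L^2(S^1,dz)$-norm $1$, and the required weak limits exist. Moreover its constant-term limit is $\widetilde{b} = \prod_{i=1}^\infty a_0^{(n_i)} \ge \prod_{j=1}^\infty a_0^{(j)} = b > 0$, since dropping factors lying in $(0,1]$ can only increase the product. Thus the subproduct always falls under the positive-$b$ regime, and it remains only to arrange the quantitative hypothesis $\widetilde{m}_k\bigl(1-\widetilde{c}_0^{(k)}\bigr)\to 0$ of Theorem~\ref{th3}, where $\widetilde{m}_k = \deg\bigl(\prod_{i=1}^k P_{n_i}\bigr) = \sum_{i=1}^k \deg P_{n_i}$ (the top-degree coefficient of a product of analytic polynomials is the product of the top-degree coefficients, so degrees add) and $\widetilde{c}_0^{(k)} = \prod_{i>k} a_0^{(n_i)}$.

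For the construction I would use that $b>0$ forces $a_0^{(j)}\to 1$, so the deficits $1 - a_0^{(j)}$ become arbitrarily small as $j\to\infty$. Writing $D_k = \sum_{i=1}^k \deg P_{n_i}$ and $\delta_i = 1 - a_0^{(n_i)}$, I would choose the indices inductively: having fixed $n_1 < \cdots < n_{i-1}$, so that $D_{i-1}$ is already determined, I pick $n_i > n_{i-1}$ large enough that $\delta_i < 2^{-i}/(D_{i-1}+1)$. The point of ordering the choice this way is that $D_{i-1}$ does not involve the still-unknown degree $\deg P_{n_i}$, so the requirement on $\delta_i$ is a genuine smallness condition that can always be met. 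This guarantees $\sum_i D_{i-1}\delta_i \le \sum_i 2^{-i} < \infty$, and in particular $\sum_i\delta_i<\infty$ so that $\widetilde{b}>0$ is confirmed.

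The key estimate is then a summation-by-parts type bound. Using $1 - \prod_{i>k}(1-\delta_i) \le \sum_{i>k}\delta_i$ and $D_{i-1}\ge D_k$ for every $i>k$, I obtain
$$\widetilde{m}_k\bigl(1-\widetilde{c}_0^{(k)}\bigr) = D_k\Bigl(1 - \prod_{i>k} a_0^{(n_i)}\Bigr) \le D_k\sum_{i>k}\delta_i \le \sum_{i>k} D_{i-1}\delta_i,$$
and the right-hand side is the tail of a convergent series, hence tends to $0$. Therefore the hypothesis of Theorem~\ref{th3} holds for the subproduct, and that theorem yields $\widetilde{S}_k\to \widetilde{f}$ in $H^2$, with the subproduct equal to $|\widetilde{f}|^2\,dz$ and $\int_{S^1}\log|\widetilde{f}|\,dz$ finite. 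Finiteness of this logarithmic integral forces $|\widetilde{f}|>0$ almost everywhere (otherwise the integral would diverge to $-\infty$), so $|\widetilde{f}|^2\,dz$ and $dz$ are mutually absolutely continuous and share the same null sets, as claimed.

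The main obstacle I anticipate is exactly the tension between the growing degrees $D_k$ and the shrinking deficit $1-\widetilde{c}_0^{(k)}$: a careless choice of subsequence could let the degrees outrun the gain in the constant term, so that the product $\widetilde{m}_k(1-\widetilde{c}_0^{(k)})$ fails to vanish. This is resolved by the bookkeeping above — fixing $D_{i-1}$ before selecting $n_i$ and forcing $\delta_i \ll 1/(D_{i-1}+1)$ — together with the observation that $D_k\sum_{i>k}\delta_i$ is dominated by $\sum_{i>k}D_{i-1}\delta_i$, which converts the product of two competing quantities into a single convergent series.
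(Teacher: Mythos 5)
Your proof is correct and follows essentially the same route as the paper: an inductive, greedy choice of indices ensuring that the degree of the partial subproduct times the tail constant-term deficit tends to zero, followed by an application of Theorem~\ref{th3}, whose conclusion that $\int_{S^1}\log|f|\,dz$ is finite forces $|f|>0$ a.e.\ and hence mutual absolute continuity with $dz$. The only cosmetic difference is bookkeeping --- you make the individual deficits $1-a_0^{(n_i)}$ summable against the accumulated degrees $D_{i-1}$, whereas the paper directly imposes $m_{l-1}\bigl(1-c_0^{(k_l)}\bigr)\le 2^{-(l-1)}$ on the tail constant terms of the full product and uses $\gamma_n\ge c_0^{(k_n)}$ --- but the mechanism is identical.
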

\begin{proof}
Put $k_1 =1$ and $ P_{k_1} = P_1$. Let $m_1$ be the degree of $P_{k_1}$. Since $b > 0$,
$c_0^{(n)} \rightarrow 1$ as $n\rightarrow \infty$. Choose $k_2 > k_1$ such that $m_1 (1 - c_0^{(k_2)})  \leq \frac{1}{2}$. Consider $P_{k_1}\cdot P_{k_2+1}$. Suppose its degree is $m_2$. Choose $k_3 > k_2$ such that
$m_2(1-c_0^{(k_3)})\leq \frac{1}{4} $. Assume that we have chosen $k_1< k_2 < \cdots< k_{l-1}$ such that for any $i, 1\leq i \leq l-2 $ if $m_i$ is the degree of $P_{k_1}P_{k_2+1}\cdots P_{k_i+1}$, then $$m_i(1 - c_0^{(k_{(i+1)})}) \leq \frac{1}{2^i}.$$ Choose
$k_{l}> k_{1-1}$ such that $$m_{l-1}(1 - c_0^{(k_l)}) \leq \frac{1}{2^{l-1}}.$$
Thus we have inductively chosen a sequence  $k_1 < k_2 < k_3 < \cdots  < k_i < \cdots$.
Write  $J_1 = P_{k_1}, J_2 = P_{k_2+1}, \cdots, J_n = P_{k_n+1}, \cdots$ and  $R = \prod_{i=0}^\infty \mid J_i(z) \mid^2 $. If $\gamma_n$ denotes the constant term of $ \prod_{i=n+1}^\infty J_i$, then it is easy to see that $\gamma_n > c_0^{(k_n)}$
so that $p_n(1 -\gamma_{n+1}) \leq \frac{1}{2^n}$, where $p_n$ is the degree of $ \prod_{i=1}^n J_i$. By the theorem above the Riesz product $R = \prod_{i=1}^\infty \mid J_i(z)\mid ^2$ is equivalent to the Lebesgue measure.
This completes the proof of the corollary.
\end{proof}
Assume that $b$ is positive. Consider $L(z) = \prod_{j=m+1}^n| P_j(z)|^2$. If $d_k(m,n) = d_j$ is the coefficient of $z^k$
in $\prod_{j = m+1}^n P_j$, then for $k >0$, the coefficient of $z^k$ in $L(z)$ is in absolute value
$$\Bigl| \sum_{j \geq k} d_j\overline{d_{j-k}}\Bigr| \leq \Bigl(\sum_{j \geq k}\mid d_j\mid^2\Bigr)^{\frac{1}{2}}\leq (1 - d^2_0)^{\frac{1}{2}}. $$
Under the assumption that $b$ is positive we can make this coefficient (which depends on $m$ and $n$) as small as we please by choosing $m$ large. We have proved:\\
\begin{Th}\label{th4}
If $b$ is positive, the generalized Riesz products $\mu_{n} \egdef \prod_{n+1}^\infty\mid P_i\mid^2$,
$n=1,2,\cdots$ converge weakly to the Lebesgue measure on $S^1$ as $n \rightarrow \infty$.
\end{Th}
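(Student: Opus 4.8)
The plan is to prove weak convergence by showing that every Fourier--Stieltjes coefficient of $\mu_n$ converges to the corresponding coefficient of Lebesgue measure $dz$, using the autocorrelation estimate already recorded just above the statement. Since the trigonometric polynomials are dense in $C(S^1)$ and each $\mu_n$ is a probability measure (hence of uniformly bounded total mass), weak convergence $\mu_n \to dz$ is equivalent to $\widehat{\mu_n}(k) \to \widehat{dz}(k)$ for every integer $k$, where I write $\widehat{\nu}(k) = \int_{S^1} z^{-k}\, d\nu(z)$. Because $\widehat{dz}(k) = 0$ for $k \neq 0$ while $\widehat{dz}(0) = 1$, and because $\mu_n$ is a real measure (so $\widehat{\mu_n}(-k) = \overline{\widehat{\mu_n}(k)}$), it suffices to check that $\widehat{\mu_n}(0) = 1$ and that $\widehat{\mu_n}(k) \to 0$ as $n \to \infty$ for each fixed $k > 0$.

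First I would dispose of $k = 0$. Each finite tail product density $L_{n,N}(z) = \prod_{j=n+1}^N |P_j(z)|^2$ integrates to $1$ by condition (i) of Definition~\ref{def1}, so $\mu_n$, being the weak limit of the probability measures $L_{n,N}\,dz$ as $N \to \infty$, is itself a probability measure and $\widehat{\mu_n}(0) = 1$ for every $n$.

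Next, fix $k > 0$. Writing $Q_{n,N} = \prod_{j=n+1}^N P_j = \sum_{j\ge 0} d_j z^j$ with constant term $d_0 = \prod_{j=n+1}^N a_0^{(j)} > 0$, the coefficient of $z^k$ in $L_{n,N} = |Q_{n,N}|^2$ is the autocorrelation $\sum_{j \ge k} d_j \overline{d_{j-k}}$, so that $\widehat{\mu_n}(k) = \lim_{N\to\infty} \sum_{j\ge k} d_j \overline{d_{j-k}}$, weak convergence of $L_{n,N}\,dz$ giving convergence of each fixed Fourier coefficient. By Cauchy--Schwarz together with $\|Q_{n,N}\|_2 = 1$, exactly as in the computation preceding the statement, $|\sum_{j\ge k} d_j \overline{d_{j-k}}| \le (\sum_{j\ge 1}|d_j|^2)^{1/2} = (1 - d_0^2)^{1/2}$; and since dropping factors $a_0^{(j)} \le 1$ only increases the constant term, $d_0 = \prod_{j=n+1}^N a_0^{(j)} \ge \prod_{j=n+1}^\infty a_0^{(j)} = c_0^{(n)}$. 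The crucial point is that the resulting bound is uniform in both $N$ and $k$, namely $|\widehat{\mu_n}(k)| \le (1 - (c_0^{(n)})^2)^{1/2}$ for all $k > 0$.

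Finally, I would invoke the hypothesis $b > 0$, which forces $c_0^{(n)} = \prod_{j=n+1}^\infty a_0^{(j)} \to 1$ as $n \to \infty$, as already observed in the text. Hence $(1 - (c_0^{(n)})^2)^{1/2} \to 0$, giving $\widehat{\mu_n}(k) \to 0$ for every fixed $k \neq 0$, which together with $\widehat{\mu_n}(0) = 1$ yields $\mu_n \to dz$ weakly. I do not anticipate a genuine obstacle here; the one point that needs care is the order of limits, namely establishing the estimate uniformly in $N$ before letting $N \to \infty$ to pass from the finite tail products to $\mu_n$, and keeping it uniform in $k$ so that the single quantity $(1-(c_0^{(n)})^2)^{1/2}$ controls all nonzero frequencies simultaneously.
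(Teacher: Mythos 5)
Your proposal is correct and follows essentially the same route as the paper: the displayed estimate immediately preceding the theorem is exactly your autocorrelation bound $\bigl|\sum_{j\ge k} d_j\overline{d_{j-k}}\bigr| \le (1-d_0^2)^{1/2}$ on the nonzero Fourier coefficients of the tail products, which the paper then makes small by taking $m$ large using $b>0$. Your write-up merely spells out the details the paper leaves implicit (the zeroth coefficient, the bound $d_0 \ge c_0^{(n)}$ uniform in $N$ and $k$, and the passage from finite tail products to $\mu_n$), all of which are sound.
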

We do not know if the conclusion of Theorem \ref{th4} always holds when $b$ is zero, but such generalized Riesz products form an important class of measures and will be discussed in section 5.

\begin{rem}\label{rem1}\textnormal{The weak dichotomy theorem (Theorem \ref{th1}) is rather weak in the sense that no information can be garnered about $\mu$, such as absolute continuity or singularity, when $b$ is zero. Consider the classical Riesz product
 $$\mu = \prod_{j=1}^\infty\mid\cos (\theta_j) + \sin (\theta_j) z^{n_j}\mid^2,~~~ \frac{n_{j+1}}{n_j} \geq 3,~~ 0 < \theta_j < \frac{\pi}{2},~~ j =1,2,\cdots$$
It is known to be absolutely continuous if $ \sum_{j=1}^\infty \cos^2(\theta_j)\sin^2(\theta_j)$ is finite and singular otherwise. Clearly the condition for absolute continuity is satisfied with $\cos (\theta_j) =  \frac{1}{j}, j=1,2,\cdots$ and also with $\cos (\theta_j) = \sqrt{1 -\left(\frac{1}{j}\right)^2},~j =1,2,\cdots$. In the first case the product of the constant terms is zero, while in the second case it is positive. This defect is rectified if we replace the polynomials $P_j$ with their
outer parts, as discussed in the next section.}
\end{rem}

\section{Outer Polynomials and Mahler Measure.}
 Let
   $$\mu = \prod_{j=1}^\infty \mid P_j(z)\mid^2 \ ~~\eqno (1)$$ be a generalized Riesz product. Let $\mu_a$ denote the part of $\mu$ absolutely continuous with respect to $dz$. We write $ \frac{d\mu}{dz}$, to mean $ \frac{d\mu_a}{dz}$.
In this section we use the classical prediction theoretic ideas to evaluate $\displaystyle \exp{\Bigl({\int_{S^1}\log\Bigl( \frac{d\mu}{dz}\Bigr)dz}\Bigr)}$ a quantity which we call the Mahler measure of $\mu$ (denoted by $M(\mu)$) with respect to the Lebesgue measure.\\

\noindent{}We will prove:
\begin{Th}\label{th5}
\begin{eqnarray*}
\int_{S^1}\log\frac{d\mu_a}{dz}dz &=&\lim_{n\rightarrow \infty}\int_{S^1}\log \prod_{j=1}^n \mid P_j(z)\mid^2dz.
\end{eqnarray*}
\end{Th}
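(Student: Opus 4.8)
The plan is to compare, for each $n$, the number
\[
\ell_n := \int_{S^1}\log\Bigl(\prod_{j=1}^n|P_j(z)|^2\Bigr)\,dz=\int_{S^1}\log|S_n(z)|^2\,dz
\]
with the entropy $\int_{S^1}\log w\,dz$ of the absolutely continuous part, where $w:=\frac{d\mu_a}{dz}$. First I would check that the right-hand side of the theorem is the decreasing limit of the $\ell_n$. Since $S_n$ is analytic with $S_n(0)=b_0^{(n)}>0$, the sub-mean value property of the subharmonic function $\log|S_n|$ gives $\ell_n\ge 2\log b_0^{(n)}>-\infty$, while Jensen's inequality gives $\ell_n\le\log\int_{S^1}|S_n|^2\,dz=0$; and $\ell_{n+1}-\ell_n=\int_{S^1}\log|P_{n+1}|^2\,dz\le\log\int_{S^1}|P_{n+1}|^2\,dz=0$. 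Hence $(\ell_n)$ decreases to a limit $\ell_\infty\in[-\infty,0]$, which is exactly the right-hand side of the statement.

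The crucial step is a factorization of $\mu$ at each finite level. Write $\tau_n:=\prod_{j=n+1}^\infty|P_j|^2$ for the tail product, a probability measure by Definition \ref{def1}. Because $|S_n|^2$ is a fixed continuous function and $\bigl|\prod_{n<j\le N}P_j\bigr|^2\,dz\to\tau_n$ weakly as $N\to\infty$, multiplying by $|S_n|^2$ and using $|S_N|^2\,dz\to\mu$ yields the identity $d\mu=|S_n|^2\,d\tau_n$. Taking absolutely continuous parts (multiplication by the continuous function $|S_n|^2$ cannot move mass off a Lebesgue-null set) gives $w=|S_n|^2\,w_n$ almost everywhere, where $w_n:=\frac{d(\tau_n)_a}{dz}$. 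Since $|S_n|^2>0$ off a finite set, $\ell_n$ is finite, and the positive parts are integrable ($\log^+w_n\le w_n$ with $\int_{S^1}w_n\,dz\le1$), I obtain the exact identity
\[
\int_{S^1}\log w\,dz=\ell_n+\int_{S^1}\log w_n\,dz,\qquad n=1,2,\dots,
\]
valid in $[-\infty,0]$.

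From this identity one inequality is immediate. By Jensen's inequality applied to the probability measure $dz$,
\[
\int_{S^1}\log w_n\,dz\le\log\int_{S^1}w_n\,dz\le\log\tau_n(S^1)=0,
\]
so $\int_{S^1}\log w\,dz\le\ell_n$ for every $n$, and therefore $\int_{S^1}\log w\,dz\le\ell_\infty$. For the reverse inequality I would use the Kolmogorov--Szeg\H{o} formula: for a finite positive measure $\sigma$ on $S^1$ with absolutely continuous density $v$,
\[
\exp\Bigl(\int_{S^1}\log v\,dz\Bigr)=\inf\Bigl\{\int_{S^1}|g|^2\,d\sigma:\ g\ \text{an analytic polynomial},\ g(0)=1\Bigr\}.
\]
Applied to $|S_n|^2\,dz$ this infimum equals $e^{\ell_n}$, and applied to $\mu$ it equals $\exp(\int_{S^1}\log w\,dz)$. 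For a fixed competitor $g$ the weight $|g|^2$ is continuous, so $\int_{S^1}|g|^2\,d\mu=\lim_n\int_{S^1}|g|^2|S_n|^2\,dz\ge\lim_n e^{\ell_n}=e^{\ell_\infty}$; taking the infimum over $g$ gives $\exp(\int_{S^1}\log w\,dz)\ge e^{\ell_\infty}$, i.e.\ $\int_{S^1}\log w\,dz\ge\ell_\infty$. The two inequalities together prove the theorem.

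The main obstacle is the upper bound $\int_{S^1}\log w\,dz\le\ell_\infty$. It amounts to a \emph{lower} semicontinuity of the Szeg\H{o} entropy along the approximating sequence $|S_n|^2\,dz\to\mu$, which is false for general weakly convergent measures (the entropy can only jump upward in the limit, and this is precisely what yields the easy reverse inequality). What rescues it here is the multiplicative structure of the generalized Riesz product, encoded in the factorization $d\mu=|S_n|^2\,d\tau_n$ with $\tau_n$ a probability measure; the mass bound $\tau_n(S^1)=1$ is exactly what forces $\int_{S^1}\log w_n\,dz\le0$ and hence the required inequality. I would therefore regard pinning down this factorization, together with the integrability bookkeeping needed to split the logarithm, as the technical heart of the argument.
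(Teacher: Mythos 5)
Your proof is correct, and half of it coincides with the paper's own argument while the other half takes a genuinely different, more elementary route. The direction $\int_{S^1}\log\frac{d\mu_a}{dz}\,dz\ge\ell_\infty$ is proved in the paper exactly as you do it: the Szeg\H{o}--Kolmogorov--Krein infimum for $\mu$ is tested against a fixed continuous weight (the paper's $|1-q_0|^2$, your $|g|^2$) along the weakly convergent measures $|S_n|^2\,dz\to\mu$, and the finite-level infimum is identified with $e^{\ell_n}$. Where you diverge is on the opposite inequality, which you rightly single out as the heart of the matter. The paper obtains $\exp\bigl(\int_{S^1}\log\frac{d\mu_a}{dz}\,dz\bigr)\le e^{\ell_n}$ through Beurling's inner--outer factorization: it identifies $e^{\ell_n}$ with $\prod_{k=1}^n|\alpha_0^{(k)}|^2$, the squared constant term of the outer part of $S_n$, interprets this as a one-step prediction error, and proves a lemma asserting that whenever $|P|^2\,d\lambda$ is a probability measure its prediction error is at most $|\alpha|^2$; the proof of that lemma needs uniform polynomial approximation of $\alpha/O(z)$, with a dilation $O(rz)$ to handle zeros of $O$ on the circle. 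You instead start from the same tail factorization $d\mu=|S_n|^2\,d\tau_n$ (which the paper also uses, writing $d\nu=\bigl(\prod_{k=1}^n|P_k|^2\bigr)d\lambda=d\mu$, and which rests on clause (ii) of the definition of a generalized Riesz product), take absolutely continuous parts to get $w=|S_n|^2 w_n$ a.e., and conclude by Jensen's inequality applied to $w_n$, using only $\tau_n(S^1)=1$. This avoids factorization theory and the approximation/dilation argument entirely, and it yields the exact identity $\int_{S^1}\log w\,dz=\ell_n+\int_{S^1}\log w_n\,dz$ as a bonus; your integrability bookkeeping (finiteness of $\ell_n$ via the sub-mean value property at $S_n(0)=b_0^{(n)}>0$, integrable positive parts of $\log w_n$) is sound. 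What the paper's heavier machinery buys is the additional formula $e^{\ell_\infty}=\prod_{k=1}^\infty|\alpha_0^{(k)}|^2$ expressing the limit through the constant terms of the outer parts, which the paper exploits afterwards (for instance in the corollary characterizing finiteness of $\int\log\frac{d\mu}{dz}$ via positivity of $\beta$ when the $P_j$ are outer); your argument proves the theorem as stated but does not produce that identification.
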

 Note that the theorem is false if we drop the log on both sides of the equation, for then the righthand side is always one, while the left hand side is zero for $\mu$ singular to Lebesgue measure. For the proof we begin by recalling Beurling's inner and outer factors for the case of polynomials and the expression for one step`prediction error', namely the quantity:
$$\inf_{q \in\mathcal Q}\int_{S^1}\mid 1 - q(z)\mid^2 \mid P(z)\mid^2dz,$$ where $P(z)$ is an analytic trigonometric polynomial with $L^2(S^1,dz)$ norm 1 and non-zero constant term.  $\mathcal Q$ is the class of all analytic trigonometric polynomials with zero constant term. To this end we have to bring into consideration the zeros of polynomials $P_j, j=1,2,\cdots$. Consider the $k$th polynomial of the generalized Riesz product product $ \prod_{k=1}^\infty|P_k|^2$. Suppressing the index $k$, it is of the type:
     $$P(z) = a_0 + a_1 z + \cdots + a_{m}z^{m}.$$
assuming that it is of degree $m$.
     Let $$A = \{a: P(a) = 0, \mid a\mid < 1\},$$
     $$B = \{b: P(b) = 0, \mid b\mid = 1\},$$
     $$C = \{c: P(c) = 0, \mid c\mid > 1\}.$$
  Then
  $$P(z) = a_{m} \prod_{a\in A}(z - a)\prod _{b\in B}(z-b) \prod_{c\in C}(z-c)$$
  $$=\prod_{a\in A}\frac{(z -a)}{(1-\overline a z)} a_{m}\prod_{a\in A}(1 - \overline a z) \prod_{b\in B}(z-b)\prod_{c\in C}(z-c).$$
  Write $$I(z) =\overline{\gamma} \prod_{a\in A}\frac{(z -a)}{(1-\overline a z)},$$
  $$O(z) = \gamma a_{m}\prod_{a\in A}(1 - \overline a z) \prod_{b\in B}(z-b)\prod_{c\in C}(z-c).$$
  where $\gamma$ is a constant of absolute value 1 such that the  constant term of $O(z)$ is positive, while $\overline {\gamma}$ is the complex conjugate of $\gamma$.
  We have,

  $$P(z) =  I(z) O(z).$$
  Note that for  $ z\in S^1$, $\mid I(z)\mid =1$, $ \mid P(z)\mid = \mid O(z)\mid $. The function $O(z)$ is non-vanishing inside the unit disc. The functions $I$ and $O$ are Beurling's inner and outer parts of the polynomial $P$. Note that, since constant term of $P$ is non-zero, the  degree of $O$  is same as that of $P$ and that $O(0)$ = constant term of $O$ $\geq P(0) = a_0$. Recall that  outer functions in $H^2$ are precisely those functions $f$ in $H^2$ for which the functions $z^n f, n \geq 0$ span $H^2$ in the closed linear sense. Hence, if $f$ is an outer function in $H^2$, then the closed linear span of $\{z^nf,n\geq 1\}$  is $ zH^2$. The orthogonal projection of $O(z)$ on $zH^2$ is $O(z) - O(0)$ where  $O(0)$ is the constant term of $O(z)$ which we denote by $\alpha$. Note that
   $$\bigl| \alpha \bigr|= \Bigl| a_{m}\prod_{b\in B}b\prod_{c\in C}c\Bigr|=|a_m|\prod_{c\in C}|c|.$$

We have

$$\bigr| \alpha\bigl|^2 = \bigintss_{S^1}\Bigl|\alpha\mid^2dz = \bigintss_{S^1}\Bigl| \bigl(O(z) -(O(z) -\alpha\bigr)\Bigr|^2dz$$
$$= \bigintss_{S^1}\Bigl| 1 - \frac{\bigl(O(z) - \alpha\bigr)}{O(z)}\Bigr|^2 \Bigl| O(z)\Bigr|^2dz$$
$$= \inf_{q\in{\mathcal Q}} \bigintss_{S^1}\Bigl| 1 - q(z)\Bigr|^2 \Bigl| O(z)\Bigr|^2dz,$$
$$= \inf_{q\in{\mathcal Q}} \bigintss_{S^1}\Bigl| 1 - q(z)\Bigr|^2 \Bigl| P(z)\Bigr|^2dz$$

where the infimum is taken over the class $\mathcal Q$ of all analytic trigonometric polynomials $q$ with zero constant term.
Thus $\frac{O(z) - \alpha}{O(z)}$ is the orthogonal projection of the constant function $1$ on the closed linear span of $\{z^n, n \geq 1\}$ in $L^2(S^1,| P(z)|^2dz)$.\\

\begin{lem}\label{lem1}
 If $\lambda$ is a probability measure on $S^1$ such that $d\nu=\mid P(z)\mid^2d\lambda$ is again a probability measure then
$$\mid \alpha \mid^2 \geq \inf_{q \in {\mathcal{Q}}}\int_{S^1}\mid 1-q(z)\mid^2d\nu.$$
\end{lem}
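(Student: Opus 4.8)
The plan is to recognize both sides as one--step prediction errors and to evaluate them through the Szeg\H{o}--Kolmogorov geometric--mean formula, after which the inequality reduces to a single application of Jensen's inequality. Write $E(\sigma)=\inf_{q\in\mathcal{Q}}\int_{S^1}|1-q(z)|^2\,d\sigma$ for a finite positive measure $\sigma$ on $S^1$; the quantity in the lemma is $E(\nu)$, and the displayed computation preceding the lemma already identifies $E(|P|^2\,dz)=|\alpha|^2$.

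First I would recall the Szeg\H{o}--Kolmogorov formula: if $d\sigma=w\,dz+d\sigma_s$ is the Lebesgue decomposition of $\sigma$, then
$$E(\sigma)=\exp\Bigl(\int_{S^1}\log w(z)\,dz\Bigr),$$
with the convention that the right--hand side is $0$ when $\log w\notin L^1(dz)$. Applied to $\sigma=|P|^2\,dz$ this gives $|\alpha|^2=\exp\bigl(\int_{S^1}\log|P|^2\,dz\bigr)$; the same identity follows directly from the fact that $O$ is outer, since $\log|\alpha|=\log|O(0)|=\int_{S^1}\log|O|\,dz=\int_{S^1}\log|P|\,dz$, the integral being finite because $P$ is a nonzero polynomial, so $\log|P|\in L^1(dz)$ even if $P$ vanishes on $S^1$.

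Next I would decompose $d\lambda=w_\lambda\,dz+d\lambda_s$. Since $|P|^2\,d\lambda_s$ remains singular with respect to $dz$, the absolutely continuous part of $\nu=|P|^2\,d\lambda$ has density $|P|^2w_\lambda$, and the formula yields
$$E(\nu)=\exp\Bigl(\int_{S^1}\log|P|^2\,dz\Bigr)\exp\Bigl(\int_{S^1}\log w_\lambda\,dz\Bigr)=|\alpha|^2\exp\Bigl(\int_{S^1}\log w_\lambda\,dz\Bigr).$$
Because $dz$ is a probability measure, Jensen's inequality gives $\exp\bigl(\int_{S^1}\log w_\lambda\,dz\bigr)\le\int_{S^1}w_\lambda\,dz\le\lambda(S^1)=1$, whence $E(\nu)\le|\alpha|^2$, which is the assertion.

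The step requiring the most care is the correct invocation of the Szeg\H{o}--Kolmogorov formula together with the measure--theoretic identification of the density of $\nu_a$, and the graceful handling of the degenerate case $\int_{S^1}\log w_\lambda\,dz=-\infty$ (i.e.\ $\lambda$ singular), where $E(\nu)=0\le|\alpha|^2$ trivially. One might hope to avoid Szeg\H{o} altogether by testing the $\nu$--extremal problem with the single function $\alpha/O$, for which $\int_{S^1}|\alpha/O|^2\,d\nu=|\alpha|^2\lambda(S^1)=|\alpha|^2$; but verifying that $1-\alpha/O$ lies in the $L^2(\nu)$--closure of $\mathcal{Q}$ amounts to approximating $1/O$ by polynomials in $L^2(|O|^2\,d\lambda)$, and this can fail when $P$ vanishes on $S^1$ and $\lambda$ charges those zeros, so the geometric--mean route is the robust one.
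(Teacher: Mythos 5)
Your proof is correct, but it follows a genuinely different route from the paper's. The paper proves the lemma constructively, without any appeal to Szeg\H{o}-type theorems: when $O$ has no zeros on $S^1$ it takes $q_k\in\mathcal{Q}$ to be the partial sums of the Taylor series of $(O(z)-\alpha)/O(z)$ (which converge uniformly on $S^1$, hence in $L^2(\nu)$), and computes $\int_{S^1}\bigl|1-\tfrac{O-\alpha}{O}\bigr|^2|O|^2\,d\lambda=|\alpha|^2$ directly; when $O$ vanishes on the circle it regularizes the denominator to $O(rz)$, $r<1$, notes that each factor $\tfrac{z-e^{i\theta}}{rz-e^{i\theta}}$ is bounded by $2$ and tends to $1$, and passes to the limit $r\to1$ by bounded convergence using $\bigl|1-\tfrac{O(z)-\alpha}{O(rz)}\bigr|^2|O(z)|^2\to|\alpha|^2$ boundedly. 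The Kolmogorov--Krein extension of Szeg\H{o}'s theorem is reserved in the paper for the \emph{reverse} inequality in Theorem \ref{th5}. You instead derive the lemma as a corollary of that same theorem applied to $\nu$ itself: you correctly identify the absolutely continuous density of $\nu$ as $|P|^2w_\lambda$ (since $|P|^2\,d\lambda_s$ stays singular), split the log integral using $\log|P|\in L^1(dz)$, identify $\exp\bigl(\int\log|P|^2\,dz\bigr)=|\alpha|^2$ via outerness of $O$ (equivalently Jensen's formula for the polynomial $P$), and close with Jensen's inequality $\exp\bigl(\int\log w_\lambda\,dz\bigr)\le\int w_\lambda\,dz\le 1$, with the right convention when the integral is $-\infty$. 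What each approach buys: yours is shorter and handles boundary zeros of $P$ with no case distinction, but it rests on a heavy black box --- Kolmogorov--Krein already encodes the entire one-step prediction theory, so the lemma becomes a corollary rather than an independent computation; the paper's argument is elementary and exhibits the near-minimizers explicitly, in the spirit of the prediction-theoretic calculation preceding the lemma. One small correction to your closing remark: the single-function test with $\alpha/O$ does not actually fail when $\lambda$ charges zeros of $P$ on $S^1$, because $\nu=|P|^2\,d\lambda$ assigns those points zero mass; the paper's $O(rz)$ regularization is precisely the repair that makes that route work, yielding $\lim_{r\to 1}\int_{S^1}\bigl|1-\tfrac{O(z)-\alpha}{O(rz)}\bigr|^2\,d\nu\le|\alpha|^2$ in all cases.
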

\begin{proof}
If $O(z)$ has no zeros on the unit circle then $\ds \frac{O(z) - \alpha}{O(z)}$ is analytic on the closed unit disk. The partial sums of the power series of this function converge to it uniformly on the unit circle. Let $q_k, k = 1,2,\cdots$ be the sequence of these partial sums.
Then
$$\mid\alpha\mid^2 = \bigintss_{S^1}\Bigl| 1 - \frac{O(z) -\alpha}{O(z)}\Bigr|^2\Bigl| O(z)\Bigr|^2d\lambda$$
$$ = \bigintss_{S^1}\Bigl|1 - \frac{O(z) -\alpha}{O(z)}\Bigr|^2d\nu$$
$$=\lim_{k\rightarrow \infty}\bigintss_{S^1}\Bigl| 1 - q_k\Bigr|^2d\nu \geq \inf_{q\in {\mathcal Q}}\bigintss_{S^1}\Bigl| 1 - q(z)\Bigr|^2d\nu$$
This conclusion remains valid even if $O(z)$ has zeros on the circle but the proof is slightly different. For fixed
$r$, $0 \leq r < 1$, the function $\frac{O(z) -\alpha}{O(rz)}$ is analytic on the closed unit disk, so the partial sums of its power series  converge to it uniformly on $S^1$. Now for any fixed real $\theta$,  $\ds \frac{z-e^{i\theta}}{rz - e^{i\theta}}$ remains bounded by 2 for $z\in S^1$
and $0\leq r <1$, and converges to 1 as $r\rightarrow 1$, for $z \neq e^{i\theta}$. Therefore $z\neq \theta$,
$\ds \frac{O(z)}{O(rz)} \rightarrow 1$ boundedly as $r\rightarrow 1$, whence
$$\Bigl| 1 - \frac{O(z) -\alpha}{O(rz)}\Bigr|^2\Bigl|O(z)\Bigr|^2 \rightarrow \left|\alpha\right|^2 $$ boundedly as $r\rightarrow 1$. It is easy to see from this that
$$\inf_{q\in  {\mathcal Q}}\bigintss_{S^1}\Bigl| 1 - q\Bigr|^2d\nu \leq \lim_{r\rightarrow 1}\bigintss_{S^1}\Bigl| 1 - \frac{O(z) - \alpha}{O(rz)}\Bigr|^2 d\nu = \mid \alpha\mid^2.$$
This proves the lemma.
\end{proof}

Consider now the polynomials $P_k,$ $k =1,2,\cdots$  and the associated finite products
$\prod_{k=1}^{n}P_k,$ $n =1,2,3,\cdots$. Let $A_k, B_k, C_k, I_k, O_k, \alpha_0^{(k)}$ have the obvious meaning: they are for $P_k$ what $A, B, C, I, O, \alpha$ are for  $ P$. Note that the inner and outer parts of $\prod_{k=1}^nP_k$ are $\prod_{k=1}^nI_k$ and $\prod_{k=1}^nO_k$
respectively and the constant term of the outer part is $\prod_{k=1}^n\alpha_0^{(k)}$. Note that $ \prod_{k=1}^\infty \alpha_0^{(k)} \egdef \beta \geq \prod_{k=1}^\infty a_0^{(k)} = b$, so if $b$ is positive, then so is $\beta$.
On the other hand, the positivity of $\beta$ does not in general imply positivity of $b$ as shown by the case of classical Riesz product (see remark \ref{rem2} below).\\

To prove Theorem \ref{th5} We apply the lemma  above to $$P = \prod_{k=1}^n \emph{}P_k(z)$$ and
$$\lambda = \prod_{k=n+1}^\infty\mid P_k(z)\mid^2,$$
Note that $$d\nu =\Bigl (\prod_{k=1}^n\mid P_k(z)\mid^2\Bigr)d\lambda = d\mu.$$
We see that for any $n$
$$\inf_{q\in {\mathcal Q}} \bigintss_{S^1}\Bigl| 1 - q\Bigr|^2 d\mu \leq \prod_{k=1}^n\mid\alpha_0^{(k)}\mid^2,$$
whence
  $$\inf_{q\in {\mathcal Q}} \bigintss_{S^1}\Bigl| 1 - q\Bigr|^2 d\mu \leq \prod_{k=1}^\infty\mid\alpha_0^{(k)}\mid^2.$$
\noindent{}To prove the above inequality in the reverse direction we note that by
Szeg\"o's theorem as generalized by Kolmogorov and Krein (K. H. Hoffman \cite{Hoffman}) that
$$\exp \Biggr\{\bigintss_{S^1}\log \Bigr(\frac{d\mu_a}{dz}\Bigl)dz\Biggl\} = \inf_{q\in {\mathcal Q}} \bigintss_{S^1}\mid 1 - q(z)\mid^2 d\mu.$$ Denote this infimum by $l$. Then, given $\epsilon > 0$, there is a polynomial $q_0\in {\mathcal Q}$ such that
$$l \leq \bigintss_{S^1}\mid 1 - q_0\mid^2d\mu < l + \epsilon,$$
\noindent{}whence for large enough $n$

$$ \bigintss_{S^1}\Bigl| 1 - q_0\Bigr|^2\prod_{k=1}^n\left| P_k\right|^2dz < l +\epsilon.$$
\noindent{}Since $$\Bigl|\prod_{k=1}^n \alpha_0^{(k)}\mid^2 =\inf_{q\in {\mathcal Q}}\bigintss_{S^1}\mid 1- q\mid^2 \prod_{k=1}^n\mid P_k(z)\mid^2dz,$$
\noindent{}we see that $\mid \alpha_0^{(1)}\cdot\alpha_0^{(2)}\cdots \alpha_0^{(n)}\mid^2 \leq l +\epsilon$. Since $\epsilon $ is arbitrary positive real number, and $\mid \alpha_0^{(k)} \mid <  1$ for all $k$, we have
$$\prod_{k=1}^\infty\mid\alpha_0^{(k)}\mid^2 \leq l.$$

\noindent{}We also note that
$$\prod_{j=1}^n\mid\alpha_0^{(j)}\mid = \exp\Biggl\{\bigintss_{S^1}\log \bigl(\mid \prod_{j=1}^n P_j(z)\mid\bigr) dz\Biggr\}$$

\noindent{}Thus we have proved (see \cite{Nadkarni2}):
 \begin{eqnarray*}
\prod_{k=1}^\infty\mid\alpha_0^{(k)}\mid^2 &=& \exp \Biggl\{\bigintss_{S^1}\log \Bigr(\frac{d\mu_a}{dz}\Bigl)dz\Biggr\}\\
&=&\lim_{n\rightarrow \infty}\exp\Biggl\{\bigintss_{S^1}\log \bigl(\prod_{j=1}^n \mid P_j(z)\mid^2 \bigr)dz\Biggr\}.
\end{eqnarray*}
which is indeed theorem \ref{th5} with some additional information.\\

\begin{Cor}
If each $P_i, i=1,2,\cdots$ is outer, then $\log \Bigl(\frac{d\mu}{dz}\Bigl)$ has finite integral if and only if $\beta$ is positive.
\end{Cor}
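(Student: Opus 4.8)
The plan is to deduce the corollary directly from the enhanced form of Theorem \ref{th5} established just above, so that almost no new work is needed. Recall that for an \emph{arbitrary} generalized Riesz product we proved
$$\prod_{k=1}^\infty \bigl|\alpha_0^{(k)}\bigr|^2 = \exp\Bigl\{\int_{S^1}\log\bigl(\tfrac{d\mu_a}{dz}\bigr)\,dz\Bigr\},$$
where $\alpha_0^{(k)}$ is the positive constant term of the outer factor $O_k$ of $P_k$, and by definition $\beta=\prod_k\alpha_0^{(k)}$, so the left-hand side equals $\beta^2$. First I would invoke the hypothesis: if every $P_i$ is outer then $I_i\equiv 1$ and $O_i=P_i$, whence $\alpha_0^{(k)}=a_0^{(k)}$ and $\beta=b$. (The equivalence below in fact holds for any generalized Riesz product; the outer hypothesis serves to identify $\beta$ with the product $b$ of the constant terms, which is exactly what repairs the defect flagged in Remark \ref{rem1}.)

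Next I would check that the right-hand integral is well defined and lies in $[-\infty,0]$, so that ``finite integral'' can only mean ``not $-\infty$''. Indeed $\tfrac{d\mu_a}{dz}\ge 0$ with $\int_{S^1}\tfrac{d\mu_a}{dz}\,dz\le\mu(S^1)=1$; the elementary bound $\log^+x\le x$ then gives $\int_{S^1}\log^+\bigl(\tfrac{d\mu_a}{dz}\bigr)\,dz\le 1<\infty$, so the positive part is integrable, while Jensen's inequality (concavity of $\log$) yields $\int_{S^1}\log\bigl(\tfrac{d\mu_a}{dz}\bigr)\,dz\le\log\int_{S^1}\tfrac{d\mu_a}{dz}\,dz\le 0$.

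The equivalence is then immediate. Since $\exp$ maps $[-\infty,0]$ bijectively onto $[0,1]$ and sends $-\infty$ to $0$, the integral $\int_{S^1}\log\bigl(\tfrac{d\mu_a}{dz}\bigr)\,dz$ is finite precisely when its exponential $\beta^2$ is strictly positive, that is, precisely when $\beta>0$. Recalling the convention that $\tfrac{d\mu}{dz}$ denotes $\tfrac{d\mu_a}{dz}$, this is the assertion. I anticipate no real obstacle: all the substance lives in Lemma \ref{lem1} and Theorem \ref{th5}, and the only points deserving care are the well-definedness and sign of the logarithmic integral and the bookkeeping that collapses $\beta$ to $b$ under the outer hypothesis.
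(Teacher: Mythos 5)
Your proposal is correct and matches the paper's (implicit) proof: the corollary is an immediate consequence of the identity $\prod_{k=1}^\infty\bigl|\alpha_0^{(k)}\bigr|^2=\exp\bigl\{\int_{S^1}\log\bigl(\tfrac{d\mu_a}{dz}\bigr)\,dz\bigr\}$ established just before it, exactly as you argue. Your added checks — that the outer hypothesis gives $O_i=P_i$ hence $\beta=b$, and that the logarithmic integral lies in $[-\infty,0]$ so finiteness is equivalent to positivity of its exponential — are the routine details the paper leaves unstated, and they are carried out correctly.
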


\begin{rem}\label{rem2}\begin{enumerate}[i)]
\item \textnormal{For the trigonometric polynomial $P(z) = \cos \theta_j + \sin \theta_j z^{n_j}$ appearing in the classical Riesz product of  remark \ref{rem1}, its outer part has the constant term $ \max \{\cos \theta_j, \sin \theta_j\}$ and the condition
$\sum_{j=1}^\infty\cos^2\theta_j\sin^2\theta_j <\infty$ is equivalent to the condition
$ \prod_{j=1}^\infty \max \{\cos \theta_j, \sin \theta_j\}$ is positive. The additional information we have now is that in case $\mu$ is absolutely continuous with respect to $dz$, $\log\frac{d\mu}{dz}$ has finite integral.}

 \item \textnormal{ Using a deep result of Bourgain \cite{bourgain}, the first author has shown recently \cite{elabdal} that if the cutting parameter $m_n, n=1,2,\cdots$ of a rank one transformation $T$ satisfies
$\frac{m_n}{n^\beta} < K$ for all $n$ for some constant $K$ and some $\beta \in (0,1]$, then the Mahler measure of the spectrum of $T$ is zero.}
\item \textnormal{It is to be noted that if each $P_n$ is outer and the product
$\prod_{k=1}^\infty P_k(0)$ is non-zero, then formal expansion $f$ of $\prod_{k=1}^\infty P_k(z)$ is an outer function. Indeed the  Mahler measure of $\mid f\mid^2$ is $\mid f(0)\mid^2$, so $f$ can not admit a non-trivial inner factor. Also the measure $1\cdot dz$ can be expressed as a generalized Riesz product only by choosing each $P_n =1$, for if any of the $P_n$ is not the constant equal to 1, then its normalized outer part will have constant term less than one, which will force the Mahler measure of $1\cdot dz$ to be less than 1, which is false. It is not known if the measure $cdz + d\delta_{1}, c, d > 0, c+d =1$ can be expressed as a generalized Riesz product, where $\delta_{1}$ denotes the Dirac measure at
1.}
\item \textnormal{ Let $\mu$ be a probability measure on $S^1$, and let $q$ be a natural number. We contract the measure to the arc $A = \{z: z = \exp\{i\theta\}, 0\leq \theta <\frac{2\pi}{q}\}$, namely we consider the measure $\nu_1$ supported on this arc given by $\nu_1(B) = \mu ({z^q: z \in B}), B \subset A$. We write similarly $\nu_i(C)  = \nu_1(\exp\{-\frac{2\pi i}q\}C), C \subset \exp\{\frac{2\pi i}{q}\}A.$ Let
    $\Pi_q(\mu)= \frac{1}{q}\sum_{i=1}^q\nu_i.$
  It can be verified that if $\mu =\mid p(z)\mid^2dz$, then $$\Pi_q(\mu) = \mid P(z^q)\mid^2dz,$$ from which we conclude that
  if $\mu = \Pi_{k=1}^\infty\mid P_k(z)\mid^2$ then
  $$\Pi_q(\mu) = \Pi_{k=1}^\infty\mid P_k(z^q)\mid^2.$$
We see immediately that the Mahler measure of a generalized Riesz product is
invariant under the application of $\Pi_q$ for any $q$.}

\end{enumerate}
\end{rem}

\section{A Formula for Radon Nikodym Derivative.}

Consider two generalized Riesz products $\mu$ and $\nu $ based on polynomials $P_j, j=1,2,\cdots$ and $Q_j, j =1,2,\cdots$ where $\nu$ is continuous except for a possible mass at $1$. Under suitable assumptions we prove the formula:\\
$$  \frac{d\mu}{d\nu} =\lim_{n\rightarrow \infty}\frac{\prod_{j=1}^n \mid P_j\mid^2}{\prod_{j=1}^n\mid Q_j\mid^2},$$
in the sense of $L^1(S^1, \nu)$ convergence.\\
Let $\sigma$ and $\tau$ be two measures on the circle. Then, by Lebesgue decomposition of  $\sigma$ with respect to $\tau$, we have
\[
\sigma=\frac{d\sigma}{d\tau} d\tau+\sigma_s,
\]
where $\sigma_s$ is singular to $\tau$ and $\frac{d\sigma}{d\tau}$ is the Radon-Nikodym derivative. In the case of two Riesz products $ \mu = \prod_{j=1}^\infty\mid P_j\mid^2$ and $ \nu =\prod_{j=1}^\infty\mid Q_j\mid^2$, we are able to see  that  their affinities, namely the ratios $ \frac{\prod_{j=1}^n\mid P_j\mid}{\prod_{j=1}^n\mid Q_j\mid}, k=1,2,\cdots$,  converge in $L^1$ to $\sqrt\frac{d\mu}{d\nu}$, assuming that $\nu$ has no point masses except possibly at $1$. This result extends a theorem of G. Brown and W. Moran \cite{Brown-Moran}. Let $\delta_1$ denote the unit mass at one. We have ( see \cite{elabdal})

\begin{Th}\label{th6}
Let $ \mu=\prod_{j=0}^{\infty}\mid P_j\mid^2$,
  $ \nu=\prod_{j=0}^{\infty}\mid Q_j\mid^2$ be two generalized Riesz products. Let
  $$\mu_n=\prod_{j=n+1}^{\infty}\mid P_j\mid^2,~~
   \nu_n=\prod_{j=n+1}^{\infty}\mid Q_j\mid^2
  $$
Assume that
\begin{enumerate}
  \item  $\nu=\nu'+b\delta_1$, $\nu'$ is continuous measure, $0\leq b <1$.
  \item $ \prod_{j=0}^{n}\mid P_j\mid^2 d\nu_n \longrightarrow \mu$ weakly as $n\longrightarrow \infty$
  \item $ \prod_{j=0}^{n}\mid Q_j\mid^2 d\mu_n \longrightarrow \nu$ weakly as $n\longrightarrow \infty$
\end{enumerate}
Then the finite products $ R_n= \prod_{k=1}^{n}{\left|\frac{P_k(z)} {Q_k(z)}\right|}, n=1,2,\cdots$ converge in $L^1(S^1,\nu)$ to $\sqrt{\frac{d\mu}{d\nu}}$.
\end{Th}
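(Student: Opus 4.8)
The plan is to reduce everything to two structural identities and then to a uniform-integrability plus almost-everywhere-convergence argument. Throughout write $p_n=\prod_{j=1}^{n}|P_j|^2$ and $q_n=\prod_{j=1}^{n}|Q_j|^2$, so that $R_n=\sqrt{p_n/q_n}$. First I would record that $d\mu=p_n\,d\mu_n$ and $d\nu=q_n\,d\nu_n$: indeed $\prod_{j=n+1}^{m}|Q_j|^2\,dz\to\nu_n$ weakly as $m\to\infty$, and since $q_n$ is a fixed trigonometric polynomial, multiplication by it preserves weak convergence, while the left-hand side is $q_m\,dz\to\nu$. With these identities the hypotheses become transparent: by assumption (2),
$$R_n^2\,d\nu=\frac{p_n}{q_n}\,q_n\,d\nu_n=p_n\,d\nu_n\longrightarrow\mu\quad\text{weakly},$$
and symmetrically by (3) one has $R_n^{-2}\,d\mu=q_n\,d\mu_n\to\nu$ weakly.

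From the first of these, testing against the constant $1$ and using that $\mu$ is a probability measure, I get $\int_{S^1}R_n^2\,d\nu\to\mu(S^1)=1$. Hence $(R_n)$ is bounded in $L^2(S^1,\nu)$, so it is uniformly integrable in $L^1(S^1,\nu)$ and relatively weakly compact in both $L^1(\nu)$ and $L^2(\nu)$. The strategy is then to combine this uniform integrability with $\nu$-almost everywhere convergence via Vitali's theorem, so that the entire difficulty lies in identifying the pointwise limit as $g:=\sqrt{d\mu/d\nu}$.

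To pin down the limit I would argue in two halves. For the upper bound, let $h$ be a weak $L^2(\nu)$ limit of a subsequence $R_{n_k}$. Since for fixed continuous $\phi\ge0$ the functional $R\mapsto\int_{S^1}\phi\,R^2\,d\nu=\|\sqrt{\phi}\,R\|_{L^2(\nu)}^2$ is convex and strongly continuous, hence weakly lower semicontinuous, we obtain $\int\phi\,h^2\,d\nu\le\liminf_k\int\phi\,R_{n_k}^2\,d\nu=\int\phi\,d\mu$; as this holds for all continuous $\phi\ge0$ it gives $h^2\,d\nu\le\mu$, that is $h\le g$ $\nu$-a.e. For the matching lower bound I would introduce the genuine Radon--Nikodym martingale $\widetilde{W}_n=\frac{d\mu|_{\mathcal F_n}}{d\nu|_{\mathcal F_n}}$ along an increasing filtration $\mathcal F_n$ of arcs generating the Borel $\sigma$-algebra; by the increasing-martingale (Andersen--Jessen) theorem $\widetilde{W}_n\to d\mu_a/d\nu=g^2$ $\nu$-a.e. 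Now $\int_A R_n^2\,d\nu=\int_A p_n\,d\nu_n$ while $\int_A\widetilde{W}_n\,d\nu=\mu(A)=\int_A p_n\,d\mu_n$ for $A\in\mathcal F_n$, so the two differ exactly by $\int_A p_n\,d(\nu_n-\mu_n)$. Hypotheses (2) and (3), together with assumption (1) that the only possible atom of $\nu$ is at $1$ (which permits choosing the $\mathcal F_n$ from $\nu$-continuity arcs), force this defect to vanish, so $R_n^2$ shares the a.e. limit $g^2$ of $\widetilde{W}_n$. Thus $R_n\to g$ $\nu$-a.e., and Vitali upgrades this to convergence in $L^1(S^1,\nu)$.

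The hard part is exactly this last identification. One cannot simply work in $L^2(\nu)$: when $\mu$ has a part singular to $\nu$ one has $\|R_n\|_{L^2(\nu)}^2\to1$ while $\|g\|_{L^2(\nu)}^2=\mu_a(S^1)$ may be strictly smaller, so $(R_n)$ fails to be Cauchy in $L^2(\nu)$ and the excess mass $1-\mu_a(S^1)$ escapes to where $R_n^2$ blows up on ever smaller $\nu$-sets. Controlling this escaping singular mass---equivalently, showing that the affinities $\int_{S^1}R_n\,d\nu$ converge to the true affinity $\int_{S^1}g\,d\nu$ rather than dropping---is where the cross hypotheses (2) and (3) and the restriction on the atoms of $\nu$ must enter in an essential way.
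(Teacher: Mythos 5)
Your preparatory reductions are sound and essentially coincide with the paper's own first steps: the identities $d\mu=p_n\,d\mu_n$, $d\nu=q_n\,d\nu_n$, the reformulations $R_n^2\,d\nu=p_n\,d\nu_n\to\mu$ and $R_n^{-2}\,d\mu=q_n\,d\mu_n\to\nu$, the bound $\int_{S^1}R_n^2\,d\nu\to 1$ giving weak $L^2(\nu)$ compactness and uniform integrability, and your lower-semicontinuity argument showing every weak limit $h$ satisfies $h^2\,d\nu\le\mu$, hence $h\le\sqrt{d\mu/d\nu}$ $\nu$-a.e. --- this last is in fact a cleaner derivation of one half of the paper's Proposition 4.2. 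But your plan breaks at exactly the step you flag as the hard part. You propose to transfer the $\nu$-a.e. convergence of the Radon--Nikodym martingale $\widetilde W_m$ to $R_n^2$ on the grounds that the defect $\int_A p_n\,d(\nu_n-\mu_n)$ vanishes by hypotheses (2) and (3). Those hypotheses give only weak convergence of measures, i.e. $\int_A p_n\,d(\nu_n-\mu_n)\to 0$ for each fixed continuity set $A$, and convergence of set-averages over a fixed generating family never forces pointwise convergence: $\bigl(1+\sin(nx)\bigr)\,dx\to dx$ weakly, every average over a fixed arc converges, yet $1+\sin(nx)$ converges a.e. along no subsequence. Since $R_n^2$ is not a martingale for any filtration tied to the index $n$, no maximal inequality is available, and weak $L^2$ compactness produces weak limits only --- it cannot yield even subsequential a.e. convergence (that is ordinarily a \emph{consequence} of the $L^1$ convergence you are trying to prove, so the route is circular). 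The Vitali step therefore has no valid input; worse, your plan requires full-sequence $\nu$-a.e. convergence, a statement stronger than the theorem itself and one the paper never establishes.

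What the paper does instead, and what your argument is missing, is twofold. First, Proposition 4.2 proves \emph{both} inequalities for weak limits: the harder direction $\sqrt{d\mu/d\nu}\le g$ is extracted from hypothesis (3) by a chain of Cauchy--Schwarz estimates tested against continuous $h$, then localized to Borel subsets of a carrier $E$ of $\nu$ with $\mu_s(E)=0$ and differentiated along refining partitions (martingale convergence is used there only for genuine derivatives of measures, never for $R_n$). This identifies the full weak $L^2(\nu)$ limit of $R_n$ as $f=\sqrt{d\mu/d\nu}$ and in particular yields the affinity convergence $\int R_n\,d\nu\to\int f\,d\nu$ that you correctly isolate as the crux. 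Second, to upgrade to strong $L^1(\nu)$ convergence the paper exploits the exact factorization $f=R_nf_n$ with $f_n=\sqrt{d\mu_n/d\nu_n}$: hypothesis (3) makes $(f_n)$ bounded in $L^2(\nu)$ with every weak limit $\phi$ satisfying $0\le\phi\le 1$, and then
\begin{equation*}
\Bigl(\int_{S^1}|R_{n}-f|\,d\nu\Bigr)^2=\Bigl(\int_{S^1}R_{n}|1-f_{n}|\,d\nu\Bigr)^2\le\Bigl(\int_{S^1}R_{n}\,d\nu-2\int_{S^1}f\,d\nu+\int_{S^1}ff_{n}\,d\nu\Bigr)\int_{S^1}R_{n}\,d\nu,
\end{equation*}
whose limit superior along the subsequence is at most $\int_{S^1}(\phi-1)f\,d\nu\le 0$. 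No pointwise convergence of $R_n$ is ever invoked. To repair your proposal you would need to replace the martingale-transfer step by some such mechanism; as written there is a genuine gap precisely where the escaping singular mass must be controlled.
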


\noindent{} To prove this we need the following proposition.
\begin{Prop}\label{prop1}
The sequence $\ds  \prod_{j=0}^{n}\left|\frac{P_j(z)}{Q_j(z) }\right|, n=1,2,\cdots$ converges weakly in $L^2(S^1,\nu)$ to $\ds \sqrt{\frac{d\mu}{d\nu}}$.
\end{Prop}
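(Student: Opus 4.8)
The plan is to show that the sequence $R_n=\prod_{j=0}^n\bigl|\tfrac{P_j}{Q_j}\bigr|$ is bounded in $L^2(S^1,\nu)$, to extract a weakly convergent subsequence, and then to prove that \emph{every} weak limit coincides with $\sqrt{d\mu/d\nu}$; uniqueness of the limit then upgrades subsequential convergence to convergence of the whole sequence. The computations rest on the two factorization identities $d\mu=\prod_{j=0}^n|P_j|^2\,d\mu_n$ and $d\nu=\prod_{j=0}^n|Q_j|^2\,d\nu_n$, valid for every $n$. These follow from the definition of the generalized Riesz products: writing $\prod_{j=0}^k|Q_j|^2=\prod_{j=0}^n|Q_j|^2\cdot\prod_{j=n+1}^k|Q_j|^2$ and using that multiplication by the fixed continuous function $\prod_{j=0}^n|Q_j|^2$ preserves weak convergence of the tail products to $\nu_n$ (and likewise for $\mu$). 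Cancelling the denominator through the $\nu$-identity gives $R_n^2\,d\nu=\prod_{j=0}^n|P_j|^2\,d\nu_n$, so hypothesis (2) yields $\|R_n\|_{L^2(\nu)}^2=\int_{S^1}\prod_{j=0}^n|P_j|^2\,d\nu_n\to\mu(S^1)=1$. Hence $\{R_n\}$ is bounded in $L^2(S^1,\nu)$ and, along a subsequence, $R_{n_k}\rightharpoonup h$ for some $h\ge 0$.

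Next I would pin $h$ down from above. The identity $R_n^2\,d\nu=\prod_{j=0}^n|P_j|^2\,d\nu_n$ together with hypothesis (2) says exactly that $R_n^2\,d\nu\to\mu$ weakly. For fixed bounded continuous $g\ge 0$ the functional $u\mapsto\int_{S^1}g\,u^2\,d\nu$ is convex and strongly continuous, hence weakly lower semicontinuous on $L^2(S^1,\nu)$, so $\int_{S^1}g\,h^2\,d\nu\le\liminf_k\int_{S^1}g\,R_{n_k}^2\,d\nu=\int_{S^1}g\,d\mu$. Letting $g$ range over nonnegative continuous functions gives $h^2\,d\nu\le\mu$ as measures, whence $h\le\sqrt{d\mu/d\nu}$ $\nu$-almost everywhere (the singular part of $\mu$ contributing nothing on the support of $\nu$). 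This is the easy half of the identification.

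It remains to match this with a lower bound. Testing against $g\equiv 1$ gives $\int_{S^1}R_{n_k}\,d\nu\to\int_{S^1}h\,d\nu$, so, since $h\le\sqrt{d\mu/d\nu}$ with $\int_{S^1}\sqrt{d\mu/d\nu}\,d\nu<\infty$, it suffices to prove that the partial affinities converge up to the full affinity, i.e. $\int_{S^1}R_n\,d\nu\to\int_{S^1}\sqrt{d\mu/d\nu}\,d\nu$: equality of the integrals of $h$ and $\sqrt{d\mu/d\nu}$ combined with $h\le\sqrt{d\mu/d\nu}$ forces $h=\sqrt{d\mu/d\nu}$ $\nu$-a.e. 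Using the $\nu$-factorization, $\int_{S^1}R_n\,d\nu=\int_{S^1}\prod_{j=0}^n|P_jQ_j|\,d\nu_n$, which is precisely the Hellinger affinity of the measure $\sigma_n:=\prod_{j=0}^n|P_j|^2\,d\nu_n=R_n^2\,\nu$ with $\nu$, and by hypothesis (2) $\sigma_n\to\mu$ weakly; moreover $\int_{S^1}\sqrt{d\mu/d\nu}\,d\nu$ is the affinity of $\mu$ with $\nu$ (the singular part drops out).

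The hard part is exactly this lower bound $\liminf_n\int_{S^1}R_n\,d\nu\ge\int_{S^1}\sqrt{d\mu/d\nu}\,d\nu$. The affinity $\sigma\mapsto\int\sqrt{d\sigma\,d\nu}$ is only \emph{upper} semicontinuous under weak convergence, being the infimum of the weakly continuous functionals $\sigma\mapsto\tfrac12\int\varphi\,d\sigma+\tfrac12\int\varphi^{-1}\,d\nu$ over continuous $\varphi$ bounded away from $0$ and $\infty$; thus $\sigma_n\to\mu$ gives only $\limsup_n\int_{S^1}R_n\,d\nu\le\int_{S^1}\sqrt{d\mu/d\nu}\,d\nu$, and the reverse inequality genuinely fails for generic weakly convergent sequences, so it must be extracted from the multiplicative structure of the $R_n$. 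Here I expect to use hypothesis (3) in the dual form $\int_{S^1}R_n^{-1}\,d\mu=\int_{S^1}\prod_{j=0}^n|P_jQ_j|\,d\mu_n$, the normalizations $\|P_j\|_2=\|Q_j\|_2=1$, and the assumption that $\nu$ is continuous apart from a possible atom at $1$, together with a telescoping tail estimate in the spirit of Brown--Moran \cite{Brown-Moran} (see also \cite{elabdal}): for each fixed $m$ one controls the affinity coming from the first $m$ factors and shows that the remaining tail contributes a factor tending to $1$, so that no affinity mass is lost in the limit. Carrying out this tail estimate is the crux of the argument.
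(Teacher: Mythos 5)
Your boundedness step and the upper bound $h\le\sqrt{d\mu/d\nu}$ are correct and essentially match the paper (the paper gets $g\le f$ by Cauchy--Schwarz, $\int h g\,d\nu=\lim\int hR_{n_j}\,d\nu\le(\int h\,d\mu)^{1/2}(\int h\,d\nu)^{1/2}$, where you use weak lower semicontinuity of $u\mapsto\int g u^2\,d\nu$; both are fine). But there is a genuine gap at exactly the point you yourself flag as the crux: the lower bound is never proved. ``I expect to use hypothesis (3) \dots together with a telescoping tail estimate in the spirit of Brown--Moran'' is a plan, not an argument, and the plan is doubtful as stated. Telescoping affinity estimates of Brown--Moran/Guenais type (compare Proposition 6.10 of the paper) need a summability hypothesis such as $\sum_k\bigl(1-\|P_k\|_1^2\bigr)^{1/2}<\infty$, which is not among the assumptions of Theorem 4.1. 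Worse, your reduction asks for the affinity convergence $\int_{S^1}R_n\,d\nu\to\int_{S^1}\sqrt{d\mu/d\nu}\,d\nu$ \emph{before} the proposition is available, whereas in the paper this convergence is the content of Theorem 4.1, whose proof uses the present proposition; so your route risks circularity or requires strictly stronger hypotheses.

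The paper's actual mechanism for the hard direction is different and worth internalizing: writing $f=\sqrt{d\mu/d\nu}$ and inserting $1=R_{n_j}\cdot R_{n_j}^{-1}$, two applications of Cauchy--Schwarz give, for continuous $h\ge0$, $\bigl(\int_{S^1}fh\,d\nu\bigr)^2\le\bigl(\int_{S^1}hR_{n_j}\,d\nu\bigr)\bigl(\int_{S^1}h\,d\mu\bigr)^{1/2}\bigl(\int_{S^1}h\,R_{n_j}^{-2}\,d\mu\bigr)^{1/2}$, and the identity $\int_{S^1}h\,R_{n_j}^{-2}\,d\mu=\int_{S^1}h\bigl|\prod_{k\le n_j}Q_k\bigr|^2\,d\mu_{n_j}$ lets hypothesis (3) turn the last factor into $\bigl(\int_{S^1}h\,d\nu\bigr)^{1/2}$ in the limit, yielding $\bigl(\int fh\,d\nu\bigr)^2\le\bigl(\int hg\,d\nu\bigr)\bigl(\int h\,d\mu\bigr)^{1/2}\bigl(\int h\,d\nu\bigr)^{1/2}$. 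Crucially this is then \emph{localized}: by density one passes to indicators of Borel sets $B\subset E$, where $E$ carries $\nu$ and $\mu_s(E)=0$ so that $\mu|_B=f^2\,d\nu$, and a martingale/differentiation argument along refining partitions gives the pointwise inequality $f\le g$ $\nu$-a.e. Note that the global case $h\equiv1$ of the displayed inequality only gives $\bigl(\int f\,d\nu\bigr)^2\le\int g\,d\nu$, which (with $g\le f$) says nothing beyond $\int f\,d\nu\le1$; so the localization is not optional, and the workable substitute for your missing integral convergence is a pointwise bound, not a total-mass one. To repair your write-up you would replace the hoped-for tail estimate by this Cauchy--Schwarz chain plus the differentiation step.
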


\begin{proof} Put
$ f=\sqrt{\frac{d\mu}{d\nu}}$
and let $n$ be a positive integer. Now

$$\bigintss_{S^1} R_n^2 d\nu = \bigintss_{S^1} \prod_{j=1}^{n}\mid {}P_j|^2 d\nu_n \rightarrow \bigintss_{S^1}d\mu = 1$$
by assumption (2). Hence $\int_{S^1} R_n^2 d\nu , n =1,2,\cdots$ remain bounded. Thus, the weak closure of $ R_n(z), n =1,2,\cdots$ in $ L^2(S^1, \nu)$ is not empty.

 We show that this weak closure has only one point, namely,
$\sqrt{\frac{d\mu}{d\nu}}$. Indeed, let $g$ be a weak subsequential limit, say, of $R_{n_j}(z), j =1,2,\cdots$. Then, for any continuous positive function $h$, we have, by judicious applications of Cauchy-Schwarz inequality,

$$\Biggl(\bigintss_{S^1} f(z) h(z) d\nu(z)\Biggr)^2 = \Biggl(\bigintss_{S^1} h(z) R_{n_j}(z) \frac{1}{R_{n_j}}\sqrt{\frac{d\mu}{d\nu}}  d\nu(z)\Biggr)^2 $$
 $$\leq  \Biggl(\bigintss_{S^1} h(z) R_{n_j}(z) d\nu(z)\Biggr) \Biggl(\bigintss_{S_1} h(z) R_{n_j}(z) \frac{1}{R_{n_j}^2(z)}\frac{d\mu}{d\nu} d\nu(z)\Biggr)$$
$$\leq \Biggl(\bigintss_{S^1} h(z) R_{n_j}(z) d\nu(z)\Biggr) \Biggl(\bigintss_{S^1} h(z) \frac{1}{R_{n_j}(z)} d\mu\Biggr)$$
 $$\leq \bigintss_{S^1} h(z) R_{n_j}(z) d\nu(z) \Biggl(\bigintss_{S^1} h(z) d\mu \Biggr)^{\frac{1}{2}}\Biggl(\bigintss_{S^1} h(z) \frac{d\mu}{R_{n_j}^2(z)}\Biggr)^{\frac{1}{2}}$$
$$\leq  \Biggl(\bigintss_{S^1} h(z) R_{n_j}(z) d\nu(z)\Biggr) \Biggl(\bigintss_{S^1} h(z) d\mu\Biggl)^{\frac{1}{2}}
\Biggl(\bigintss_{S^1} h(z)\mid\prod_{k=1}^{n_j} Q_k\mid^2 d\mu_{n_j}\Biggr)^{\frac{1}{2}} $$

Letting $j \rightarrow +\infty$, from our assumption (3), we get

$$\Biggl(\bigintss_{S^1} f h d\nu\Biggr)^2 \leq
\Biggl(\bigintss_{S^1} h g d\nu\Biggr) \Biggl(\bigintss_{S^1} h d\mu\Biggr)^{\frac{1}{2}}
\Biggl(\bigintss_{S^1} h d\nu\Biggr)^{\frac{1}{2}}  \eqno (2).$$

 But, since the space of continuous functions is dense in $L^2(\mu+\nu)$, we deduce from (2) that, for any Borel set $B$,

$$\Biggl(\bigintss_{B} f  d\nu\Biggr)^2 \leq
\Biggl(\bigintss_{B}  g d\nu\Biggr) \Biggl(\bigintss_{B} d\mu\Biggr)^{\frac{1}{2}}
\Biggl(\bigintss_{B} d\nu\Biggr)^{\frac{1}{2}}.
$$
By taking a Borel set $E$ such that $\mu_s(E)=0$ and $\nu(E)=1$, we thus get, for any $B \subset E$,

$$\Biggl(\bigintss_{B} f  d\nu\Biggr)^2 \leq
\Biggl(\bigintss_{B}  g d\nu\Biggr) \Biggl(\bigintss_{B} f^2(z) d\nu \Biggr)^{\frac{1}{2}}
\Biggl(\bigintss_{B} d\nu\Biggr)^{\frac{1}{2}}.$$

It follows from Martingale convergence theorem that:
$$ f(z) \leq g(z) {\textrm {~for~almost~all~}}z {\textrm{~with~respect~to~}} \nu.$$

Indeed, let ${{\mathcal  P}}_n = \{A_{n,1}, A_{n,2}\cdots, A_{n, k_n}\},$  $n =1,2,\cdots$, be a refining sequence of finite partitions of $E$ into Borel sets such that such that they tend to the partition of singletons. If $\{x\} = \cap_{n=1}^\infty A_{n,j_n}$,

$$\Biggl(\frac{1}{\mu (A_{n, j_n})}\bigintss_{B} f  d\nu\Biggr)^2 \leq$$
$$
\Biggl(\frac{1}{\mu (A_{n, j_n})}\bigintss_{A_{n,j_n}}  g d\nu\Biggr) \Biggl(\frac{1}{\mu (A_{n, j_n})}\bigintss_{A_{n,j_n}} f^2(z) d\nu \Biggr)^{\frac{1}{2}}
\Biggl(\frac{1}{\mu (A_{n, j_n})}\bigintss_{A_{n,j_n}} d\nu\Biggr)^{\frac{1}{2}}.$$
Letting $n\rightarrow \infty$ we have, by Martingale convergence theorem as applied to the theory of derivatives,
for a.e $x \in E$ w.r.t. $\nu$,
$$(f(x))^2 \leq g(x) f(x), ~~{\rm{whence}} ~~f(x)\leq g(x)$$

\noindent{}For the converse note that for any continuous positive function $h$ we have
$$\bigintss_{S^1} g h d\nu = \lim_{j \longrightarrow +\infty}\bigintss_{S^1} h(z) R_{n_j}   d\nu $$
$$ \leq   \lim_{j \longrightarrow \infty} \Biggl(\bigintss h R_{n_j}^2 d\nu\Biggr)^{\frac{1}{2}} \Biggl(\bigintss_{S^1} h d\nu\Biggr)^{\frac{1}{2}}$$
$$ \leq  \Biggl(\bigintss_{S^1} h d\mu\Biggr)^{\frac{1}{2}}  \Biggl(\bigintss h  d\nu\Biggr)^{\frac{1}{2}}.$$

\noindent{}As before we deduce that $g(z) \leq f(z)$ for almost all $z$ with respect to $\nu$.
Consequently, we have proved that $g=f$ for almost all $z$ with respect to $\nu$ and this complete the proof of the proposition.
\end{proof}
\begin{proof}[\bf {Proof of Theorem \ref{th6}}] We will show that
    $\beta_n \stackrel{\textrm {def}}{=} \bigintss_{S^1} \mid R_n-f\mid d\nu \rightarrow 0$ as $n \rightarrow \infty$,
where $f=\sqrt{\frac{d\mu}{d\nu}}$. Now,

$$\frac{d\mu}{d\nu}=R_n^2(z)\frac{d\mu_n}{d\nu_n}~~ {\rm{and}}~~  \sqrt{\frac{d\mu}{d\nu}}=R_n(z)\sqrt{\frac{d\mu_n}{d\nu_n}}$$

\noindent{}Put $$f^2_n =\frac{d\mu_n}{d\nu_n},$$ Then,

 $$\bigintss_{S^1}f_n^2d\nu  = \bigintss_{S^1}\prod_{k=1}^n\mid Q_k\mid^2d\mu_n \rightarrow \bigintss_{S^1}d\nu =1,$$
by assumption (3). The functions $f_n, n=1,2,\cdots$ are therefore bounded in $L^2(S^1, \nu)$.
 Hence, there exists a subsequence
$f_{n_j} = \sqrt{\frac{d\mu_{n_j}}{d\nu_{n_j}}}, j = 1,2, \cdots$ which converges weakly to some $L^2(S^1, \nu)$-function $\phi$. We show that $0 \leq \phi \leq 1$ a.e ($\nu$). For any continuous positive function $h$, we have

$$\Biggl(\bigintss_{S^1} h f_{n_j} d\nu\Biggr)^2 \leq \Biggl(\bigintss_{S^1} h d\nu\Biggr) \Biggl(\bigintss_{S^1} h f_{n_j}^2 d\nu\Biggr)$$
$$\leq \Biggl(\bigintss_{S^1} h d\nu\Biggr) \Biggl(\bigintss_{S^1} h \frac{d\mu_{n_j}}{d\nu_{n_j}}d\nu\Biggr).$$

\noindent{} Hence, by letting $j$ go to infinity combined with our assumption (3), we deduce that
$$ \bigintss_{S^1} h(z) \phi(z) d\nu \leq  \bigintss_{S^1} h(z) d\nu.$$
Since this hold for all continuous positive functions $h$, we conclude that $0 \leq \phi \leq 1$ for almost all $z$ with respect to $\nu$. Thus any subsequential limit of the sequence $f_n, n=1,2,\cdots$ assumes values between $0$ and $1$.
Now, for any subsequence $n_j, j =1,2,\cdots$ over which $f_{n_j}, j=1,2,\cdots$ has a weak limit , from our assumption (2) combined with Cauchy-Schwarz inequality, we have
$$
 \Biggl(\bigintss_{S^1}|R_{n_j}-f| d\nu\Biggr)^2=\Biggl(\bigintss_{S^1} |R_{n_j}-R_{n_j} f_{n_j}| d\nu\Biggr)^2$$

$$=\Biggl(\bigintss_{S^1} R_{n_j}|1-f_{n_j}| d\nu\Biggr)^2 $$
$$\leq \Biggl(\bigintss_{S^1} R_{n_j}|1-f_{n_j}|^2 d\nu\Biggr) \Biggl(\bigintss_{S^1} R_{n_j} d\nu \Biggr)$$
$$\leq \Biggl(\bigintss_{S^1} R_{n_j} d\nu-2\bigintss_{S^1} R_{n_j}f_{n_j}d\nu+\bigintss_{S^1} R_{n_j}(f_{n_j})^2 d\nu\Biggr)\Biggl(\bigintss_{S^1}R_{n_j}d\nu\Biggr)$$

$$\leq \Biggl(\bigintss_{S^1} R_{n_j} d\nu-2 \bigintss_{S^{1}} f d\nu+\bigintss_{S^1} R_{n_j}f_{n_j}. f_{n_j}  d\nu\Biggr)\Biggl(\bigintss_{S^1}R_{n_j}d\nu\Biggr)$$

$$\leq \Bigl(\bigintss_{S^1} R_{n_j} d\nu-2 \bigintss_{S^1} f d\nu+\bigintss_{S^{1}} f. f_{n_j}  d\nu\Bigr)\Bigl(\bigintss_{S^1}R_{n_j}d\nu\Bigr)$$

\noindent{}Hence, letting $j$ go to infinity,
$$\Biggl(\lim_{j \rightarrow \infty}\bigintss_{S^1}\mid R_{n_j}-f\mid d\nu\Biggr)^2$$
$$\leq \bigintss_{S^1} f d\nu-2 \bigintss_{S^1} f d\nu+\bigintss_{S^1} f. \phi  d\nu$$
$$\leq \bigintss_{S^1}(\phi(z)-1)f(z) d\nu(z).$$
  $$\leq 0,$$
\noindent{}and this implies that $R_{n_j}, j =1,2,\cdots $  converges to $ f$ in $L^1(S^1, \nu)$  and the proof of the theorem is achieved.
\end{proof}
\begin{rem}\label{rem3}
\textnormal{Notice that
$\ds \int_{S^1} \frac{d\mu}{d\nu} d\nu=1,$
implies the convergence of $ \prod_{j=0}^{N}|R_j|$ to $ \sqrt{\frac{d\mu}{dz}}$ in $L^2(d\nu)$, by virtue of the classical results on ``when weak convergence implies strong convergence".}
\end{rem}
\noindent{}We further have \cite{Nadkarni1}
\begin{Cor}\label{cor3}
Two generalized Riesz products $\mu = \prod_{j=1}^\infty\big| P_j\big|^2$,
$\nu = \prod_{j=1}^\infty\big| Q_j\big|^2$  satisfying the conditions of Theorem \ref{th6} are mutually singular
if and only if $$\bigintss_{S^1} \prod_{j=0}^n\Big|\frac{P_j}{Q_j}\Big| d\nu \rightarrow 0~~ {\rm{as}} ~~n\rightarrow \infty.$$
\end{Cor}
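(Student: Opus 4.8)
The plan is to read the corollary off directly from the $L^1(S^1,\nu)$ convergence already established in Theorem \ref{th6}. Write $f=\sqrt{\frac{d\mu}{d\nu}}$ and $R_n=\prod_{k=1}^n\left|\frac{P_k}{Q_k}\right|$, so that the quantity in the statement is exactly $\int_{S^1}R_n\,d\nu$. Theorem \ref{th6} asserts that $R_n\to f$ in $L^1(S^1,\nu)$. Since
$$\left|\int_{S^1}R_n\,d\nu-\int_{S^1}f\,d\nu\right|\le\int_{S^1}|R_n-f|\,d\nu\longrightarrow 0,$$
the numerical sequence $\int_{S^1}R_n\,d\nu$ converges to $\int_{S^1}f\,d\nu$. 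Hence the condition $\int_{S^1}R_n\,d\nu\to 0$ is equivalent to $\int_{S^1}f\,d\nu=0$, and the whole problem is reduced to characterizing when this last integral vanishes.

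Next I would translate $\int_{S^1}f\,d\nu=0$ into a statement about the Radon--Nikodym derivative. Because $f=\sqrt{\frac{d\mu}{d\nu}}\ge 0$ $\nu$-almost everywhere, its integral against $\nu$ vanishes if and only if $f=0$ $\nu$-a.e., that is, if and only if $\frac{d\mu}{d\nu}=0$ $\nu$-a.e. Invoking the Lebesgue decomposition $\mu=\frac{d\mu}{d\nu}\,d\nu+\mu_s$ with $\mu_s\perp\nu$, the density $\frac{d\mu}{d\nu}$ vanishing $\nu$-a.e. says precisely that the absolutely continuous part of $\mu$ with respect to $\nu$ is null, which is exactly mutual singularity $\mu\perp\nu$. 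Chaining these equivalences yields $\mu\perp\nu\iff\int_{S^1}\prod_{j=0}^n\left|\frac{P_j}{Q_j}\right|\,d\nu\to 0$, as claimed.

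I expect no serious obstacle here: all the analytic content has been absorbed into Proposition \ref{prop1} and Theorem \ref{th6}, and what remains is the elementary measure-theoretic dictionary between the vanishing of a nonnegative $L^1$ limit and mutual singularity. The one point deserving a word of care is that the convergence of the affinities is genuinely in $L^1(\nu)$, not merely weak or pointwise, so that passage to the limit under the integral sign is legitimate; this strengthening from the weak convergence of Proposition \ref{prop1} to the $L^1$ convergence of Theorem \ref{th6} is exactly what makes the clean two-sided characterization possible.
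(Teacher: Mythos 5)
Your proof is correct and is exactly the intended derivation: the paper states this corollary without a separate proof, as an immediate consequence of the $L^1(S^1,\nu)$ convergence $R_n \to \sqrt{\frac{d\mu}{d\nu}}$ from Theorem \ref{th6}, which is precisely the route you take. The only (inessential) discrepancy is the index mismatch $\prod_{j=0}^n$ versus $\prod_{k=1}^n$, which is an inconsistency already present in the paper's own statements.
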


\section{A Conditional Strong Dichotomy and Other Discussion.}

An important class of generalized Riesz products is the one arising in the study of rank one transformations of ergodic theory \cite{Nadkarni1}. Indeed much of the recent work on generalized Riesz products (including the present contribution) is motivated by or focussed on the question whether these
generalized Riesz products are always singular to Lebesgue measure. For in the contrary case, a counter example to this belief, will in all probability yield an affirmative answer to an old problem of Banach as to whether there exists a measure preserving transformation on an atom free measure space with simple Lebesgue spectrum.

The $k^{th}$ polynomial in the generalized Riesz product arising in the study of measure preserving rank one transformation is
 of the type
 $$P_k(z) = \frac{1}{\sqrt{m_k}}(1 + \sum_{j=1}^{m_k-1}z^{jh_{k-1} + a_1^{(k)} + \cdots  + a_j^{(k)}}),$$
 $$h_{k-1}={m_{k-2}h_{k-2}}+\sum_{j=1}^{m_{k-2}}a_j^{(k)}, h_0=1,$$
\noindent{}where $m_k$'s are the cutting parameter and $a_j^{(k)}$'s are the spacers of the rank one transformation under consideration. It is easy to see that the partial products $\prod_{j=1}^k P_j, j =1,2,\cdots$ converge weakly to zero in $L^2(S^1, dz)$. These generalized Riesz products $\prod_{j=1}^\infty| P_j|^2$ have the property that
 the sequence of their tails $\prod_{j=n+1}^\infty| P_j|^2,$ $n=1,2,\cdots$ converges weakly to the Lebesgue measure. In the rest of this section we will assume that the generalized Riesz products have this additional property,
 although  is not assumed that they arise from rank one transformations as above.
\begin{Def}\label{def3}
 A generalized Riesz product $\mu  = \prod_{j=1}^\infty |P_j|^2 $ is said to be of class
 (L) if for each sequence $k_1 < k_2 < \cdots$ of natural numbers, the tail measures
 $\prod_{j=n+1}^\infty\mid{P_{k_j}} \mid^2, n = 1,2,\cdots$ converge weakly to  Lebesgue measure.
\end{Def}
\begin{Prop}\label{prop2}
If the generalized Riesz product $\mu =\prod_{j=1}^\infty\mid P_j\mid^2$ is of class (L) then the partial products $\prod_{j=1}^n\mid P_j\mid, n=1,2,\cdots$
converge in $L^1(S^1, dz)$ to $\sqrt{\frac{d\mu}{dz}}$, and the convergence is almost everywhere (w.r.t $dz$) over a subsequence.
\end{Prop}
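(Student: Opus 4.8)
The plan is to apply Theorem \ref{th6} with $\nu = dz$ (Lebesgue measure), taking $Q_j \equiv 1$ for all $j$, so that $\prod_{j=1}^n |Q_j|^2 = 1$ and the affinities $R_n$ reduce to exactly the partial products $\prod_{j=1}^n |P_j|$. The conclusion of Theorem \ref{th6} then says precisely that $R_n = \prod_{j=1}^n |P_j| \to \sqrt{d\mu/dz}$ in $L^1(S^1, dz)$, which is the $L^1$ convergence asserted. So the real work is to verify that the three hypotheses of Theorem \ref{th6} hold in this setting, using only the class (L) assumption.

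\noindent{}First I would check hypothesis (1): here $\nu = dz$ is Lebesgue measure, which is continuous and has no point mass, so we may take $b = 0$, and (1) holds trivially. Next, hypothesis (3) reads $\prod_{j=1}^n |Q_j|^2 \, d\mu_n \to \nu$ weakly; since $Q_j \equiv 1$ this is just $\mu_n = \prod_{j=n+1}^\infty |P_j|^2 \to dz$ weakly, which is exactly the defining property of class (L) applied to the full sequence $k_j = j$. The one hypothesis requiring genuine thought is (2): $\prod_{j=1}^n |P_j|^2 \, d\nu_n \to \mu$ weakly. Since $Q_j \equiv 1$, the tail $\nu_n = \prod_{j=n+1}^\infty |Q_j|^2$ is the weak limit of $\prod_{j=n+1}^k 1 \cdot dz = dz$, i.e. $\nu_n = dz$ for every $n$. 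Therefore the left side of (2) is $\prod_{j=1}^n |P_j|^2 \, dz$, whose weak limit is by definition the generalized Riesz product $\mu$. Thus (2) holds by Definition \ref{def1} itself, and the three hypotheses are verified.

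\noindent{}With Theorem \ref{th6} in force, the $L^1(S^1, dz)$ convergence $\prod_{j=1}^n |P_j| \to \sqrt{d\mu/dz}$ is immediate. It remains to upgrade this to almost-everywhere convergence over a subsequence: this is the standard fact that any $L^1$-convergent sequence admits a subsequence converging pointwise almost everywhere (with respect to $dz$). Extracting such a subsequence completes the proof.

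\noindent{}The main obstacle I anticipate is purely bookkeeping rather than conceptual: one must be careful that the class (L) hypothesis supplies exactly the weak convergence of tails needed for hypothesis (3), and that the trivial choice $Q_j \equiv 1$ correctly forces $\nu_n = dz$ so that hypothesis (2) collapses to the very definition of $\mu$. There is a mild subtlety in that Theorem \ref{th6} allows $\nu$ to carry a mass at the point $1$, whereas here $\nu = dz$ is purely continuous; one should confirm that the degenerate case $b = 0$ of hypothesis (1) is genuinely covered by the proof of Theorem \ref{th6}, which it is, since that proof nowhere uses positivity of the atom. Once these identifications are made explicit, the proposition follows with essentially no further computation.
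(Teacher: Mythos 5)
Your proposal is correct and is essentially identical to the paper's own proof, which also invokes Theorem \ref{th6} with $Q_j \equiv 1$ (so $\nu = dz$) and then extracts an a.e.\ convergent subsequence from $L^1$ convergence. Your explicit verification of the three hypotheses---(1) trivially with $b=0$, (2) collapsing to Definition \ref{def1}, and (3) being exactly the class (L) property---is a correct elaboration of details the paper leaves implicit.
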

\begin{proof}
In Theorem \ref{th6}  we put $Q_j(z) =1$ for all $j$, so that $\nu$ is the Lebesgue measure on $S^1$. The first conclusion follows from  theorem \ref{th6}. The second conclusion follows since $L^1$ convergence implies convergence a.e over a subsequence.
\end{proof}
\noindent{}The following formula follows immediately from this:
\begin{Cor}\label{cor4}
Let a generalized Riesz product $\mu$ be of class (L). Let ${{\mathcal K} }_1,{{\mathcal K}}_2$
be two disjoint subsets of natural numbers and let ${\mathcal K}_0$ be their union.
Let $\mu_1, \mu_2$ and $\mu_0$ be the generalized Riesz subproducts of $\mu$ over
${\mathcal K}_1, {\mathcal K}_2$, and ${\mathcal K}_0$  respectively. Then we have:
$$\frac{d\mu_0}{dz} = \frac{d\mu_1}{dz}\frac{d\mu_2}{dz},  \eqno (1)$$
where equality is a.e. with respect to the measure $dz$.
\end{Cor}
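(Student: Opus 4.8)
The plan is to reduce the identity to the exact multiplicativity of partial products, combined with the $L^1$ convergence furnished by Proposition \ref{prop2}, and then to upgrade that $L^1$ convergence to almost everywhere convergence along a common subsequence so that the three limits may legitimately be multiplied.

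First I would check that each subproduct $\mu_0,\mu_1,\mu_2$ is again of class (L). The polynomials entering $\mu_i$ are $\{P_k : k\in\mathcal{K}_i\}$ listed in increasing order of index, and any subsequence of $\mathcal{K}_i$ is a subsequence of $\N$; hence the class (L) hypothesis on $\mu$ already forces every tail measure associated with $\mu_i$ to converge weakly to Lebesgue measure, which is exactly the requirement for $\mu_i$ to be of class (L). Consequently Proposition \ref{prop2} applies to each $\mu_i$ separately. Writing $R_N^{(i)}=\prod_{k\in\mathcal{K}_i,\,k\le N}\mid P_k\mid$ for the partial products reindexed by the cutoff $N$ (rather than by the number of factors, which merely slows the sequence without altering its limit), I obtain $R_N^{(i)}\to\sqrt{\frac{d\mu_i}{dz}}$ in $L^1(S^1,dz)$ for $i=0,1,2$.

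Next I would exploit the disjoint decomposition $\mathcal{K}_0=\mathcal{K}_1\sqcup\mathcal{K}_2$. Since for every $N$ the set $\mathcal{K}_0\cap\{1,\dots,N\}$ splits as the disjoint union of $\mathcal{K}_1\cap\{1,\dots,N\}$ and $\mathcal{K}_2\cap\{1,\dots,N\}$, the partial products factor exactly, $R_N^{(0)}=R_N^{(1)}R_N^{(2)}$, for all $N$. The one point demanding care, and the main obstacle, is that a product of $L^1$-convergent sequences need not converge to the product of the limits; to circumvent this I would pass to a common subsequence. Since $R_N^{(0)}\to\sqrt{\frac{d\mu_0}{dz}}$ in $L^1$, some subsequence converges a.e.; along it $R_N^{(1)}$ still converges in $L^1$, so a further subsequence converges a.e.; and a third extraction makes $R_N^{(2)}$ converge a.e. as well. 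This is precisely where the ``a.e. over a subsequence'' half of Proposition \ref{prop2} does the essential work.

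Finally, along this common subsequence $N_i$ all three factors converge almost everywhere, so the identity $R_{N_i}^{(0)}=R_{N_i}^{(1)}R_{N_i}^{(2)}$ passes to the limit and gives $\sqrt{\frac{d\mu_0}{dz}}=\sqrt{\frac{d\mu_1}{dz}}\,\sqrt{\frac{d\mu_2}{dz}}$ almost everywhere with respect to $dz$. Squaring yields the asserted formula $\frac{d\mu_0}{dz}=\frac{d\mu_1}{dz}\frac{d\mu_2}{dz}$ a.e., completing the argument. (If either $\mathcal{K}_1$ or $\mathcal{K}_2$ is finite the corresponding partial products stabilize and the identity is immediate, so only the case of two infinite index sets requires the subsequence extraction above.)
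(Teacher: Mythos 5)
Your proposal is correct and follows essentially the same route as the paper, which derives the corollary directly from Proposition \ref{prop2} (the paper merely states that the formula ``follows immediately''). Your write-up supplies exactly the details the paper leaves implicit — that subproducts inherit the class (L) property from Definition \ref{def3}, that the partial products factor exactly under the disjoint decomposition, and that a common a.e.-convergent subsequence justifies multiplying the three limits — so it is a faithful, careful elaboration of the intended argument.
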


Let  $\mu = \prod_{j=1}^\infty \mid P_j\mid^2$ be a generalized Riesz product of class (L). We assume that the polynomials $P_j, j=1,2,\cdots$ are outer. Write
 $S_n = \prod_{j=1}^nP_j$, and let  $\phi_n = \frac{S_n}{|S_n|}$, a function of absolute value one.
 The functions $\phi_n, n=1,2,\cdots$ admit weak$*$ limits as functions in $L^{\infty}(S^1, dz)$. By Theorem \ref{th6} if $\beta$ is positive then there is a unique nowhere vanishing weak star limit $\frac{f}{| f|}$ which is indeed also a limit in $L^1(S^1, dz)$. On the other hand consider the simplest classical Riesz product
given by
$$\mu = \prod_{j=1}^\infty  \frac{1}{\sqrt 2}\Big| 1 + z^{n_j}\Big|^2,~~~ \frac{n_j}{n_{j-1}} \geq 3,$$ which is singular to the Lebesgue measure on $S^1$.
Since $ 1 + e^{it}={\mid 1 + e^{it}\mid} e^{i\frac{t}{2}}$, we see that
$$\phi_k (e^{it})=e^{i\left(\sum_{j=1}^k n_j\right)\frac{t}{2}} \longrightarrow 0$$
in the weak$*$ topology as $k \rightarrow \infty$, by virtue of the Riemann-Lebesgue lemma. However the following conditional strong dichotomy holds.
\begin{Th}\label{th7}
If the functions $\phi_n, n =1,2,\cdots$ admit a weak star limit $\phi$ in $L^{\infty}(S^1, dz)$  which is non-vanishing a.e. (dz) on the set $\{z: \frac{d\mu}{dz} > 0\}$, then  $\mu$ is either singular to Lebesgue measure, or its absolutely continuous part has positive Mahler measure.
\end{Th}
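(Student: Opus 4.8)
The plan is to reduce everything to a dichotomy on the scalar $b=\beta=\prod_{j=1}^\infty a_0^{(j)}$ (the two quantities coincide because the $P_j$ are assumed outer, so each constant term $a_0^{(j)}$ equals $\alpha_0^{(j)}$), and to invoke the computation of Section 3 (Theorem \ref{th5}), which gives that the Mahler measure of $\mu$ is $\prod_{j=1}^\infty|\alpha_0^{(j)}|^2=\beta^2$. Thus ``the absolutely continuous part has positive Mahler measure'' means exactly $\beta>0$, and $\beta=0$ is the only alternative. Writing $g=\sqrt{d\mu/dz}=\sqrt{d\mu_a/dz}$, the set appearing in the hypothesis is $\{g>0\}$, so it suffices to prove: if $\beta=0$ and $\phi\neq0$ a.e.\ on $\{g>0\}$, then $g=0$ a.e., i.e.\ $\mu$ is singular.

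First I would assemble the three convergences already at hand. Since $\beta=b=0$, Theorem \ref{th1} gives $S_n=\prod_{j=1}^nP_j\to0$ weakly in $L^2(S^1,dz)$, hence also in the topology $\sigma(L^1,L^\infty)$. Because $\mu$ is of class (L), Proposition \ref{prop2} gives $|S_n|=\prod_{j=1}^n|P_j|\to g$ strongly in $L^1(S^1,dz)$. Finally the hypothesis supplies $\phi_n=S_n/|S_n|\to\phi$ weak$*$ in $L^\infty$, with $\|\phi_n\|_\infty=1$.

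The heart of the argument is to identify the weak $L^1$ limit of $S_n=|S_n|\,\phi_n$ by combining strong $L^1$ convergence of the modulus with weak$*$ convergence of the phase. For any $w\in L^\infty(S^1,dz)$ I would split
$$\int_{S^1}S_n\,w\,dz=\int_{S^1}(|S_n|-g)\,\phi_n\,w\,dz+\int_{S^1}(g\,w)\,\phi_n\,dz.$$
The first integral is at most $\||S_n|-g\|_1\,\|w\|_\infty\to0$, and the second tends to $\int_{S^1}(gw)\phi\,dz$ since $gw\in L^1$ and $\phi_n\to\phi$ weak$*$. Hence $S_n\to g\phi$ in $\sigma(L^1,L^\infty)$. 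Comparing with the weak limit $0$ of the previous paragraph and using uniqueness of weak limits, I conclude $g\phi=0$ a.e.

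To finish, the hypothesis that $\phi$ is non-vanishing a.e.\ on $\{g>0\}$ forces that set to be Lebesgue-null: wherever $g>0$ and $\phi\neq0$ one has $g\phi\neq0$, which is incompatible with $g\phi=0$ unless the set has measure zero. Therefore $d\mu_a/dz=g^2=0$ a.e., so $\mu$ is singular to Lebesgue measure, which establishes the stated dichotomy. The step carrying the real content — and the one I would take most care over — is the product-limit computation: it is precisely the \emph{strong} $L^1$ convergence of $|S_n|$ (delivered by class (L) through Proposition \ref{prop2}) that licenses passing to the limit against the merely weak$*$-convergent factors $\phi_n$, after which the weak vanishing of $S_n$ (from $\beta=0$) does the rest.
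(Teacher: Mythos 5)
Your proof is correct, and its central computation --- passing to the limit in $S_n=|S_n|\phi_n$ by playing the strong $L^1(S^1,dz)$ convergence of $|S_n|$ (Proposition 5.2, via class (L)) against the weak$*$ convergence of $\phi_n$ --- is exactly the estimate in the paper, except that the paper tests only against the characters $z^k$ while you test against arbitrary $w\in L^\infty$. Where you genuinely diverge is the endgame. The paper never splits on $\beta$: it notes that $\int_{S^1}z^kS_n\,dz=0$ for $k<0$, concludes that the limit $\phi f$ has vanishing negative Fourier coefficients and hence lies in $H^1$, and then invokes the F.\ and M.\ Riesz theorem together with log-integrability of nonzero $H^1$ functions to obtain both horns of the dichotomy at once: either $\phi f\equiv 0$, so $f=0$ a.e.\ by the nonvanishing hypothesis and $\mu$ is singular, or $\log|\phi f|$, hence $\log f$, is integrable. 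You instead pre-split on $\beta$: for $\beta>0$ the Mahler identity $M(\mu)=\beta^2$ from Theorem 3.1 settles the second horn with no use of $\phi$ at all, while for $\beta=0$ outerness gives $b=\beta=0$, so Theorem 2.2 makes $S_n\to 0$ weakly and mere uniqueness of weak limits yields $g\phi=0$, hence singularity. Each approach buys something. Yours is more elementary at the crux (no $H^1$ theory, only uniqueness of weak limits) and isolates the structural fact that the hypothesis on $\phi$ is needed only when $\beta=0$; but it leans on the Szeg\H{o}-type machinery of Section 3 and, crucially, on the standing outerness assumption to identify $b$ with $\beta$ --- without outerness, $\beta=0$ does not force $b=0$ (see Remark 3.4(i)), and Theorem 2.2 would be unavailable, so your case split would break. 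The paper's argument uses only analyticity of the $S_n$ and the class (L) property, so it survives without outerness, and it yields the extra information recorded in Remark 5.5: $\phi f\in H^1$, so any weak$*$ limit $\phi$ either never vanishes or vanishes a.e.\ on $\{f>0\}$ --- a conclusion your route does not produce.
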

\begin{proof}
Let $f = \sqrt\frac{d\mu}{dz}$. Fix an integer $k$, then
$$\Biggl|\bigintss_{S^1}z^kS_n(z) - z^k\phi_n(z)f(z)dz\Biggr| = \Biggl|\bigintss_{S^1}z^k\phi_n(z)(\left| S_n(z)\right| - f(z))dz\Biggr|$$
$$\leq \bigintss_{S^1}\left| \left|S_n(z)\right| - f(z)\right| dz \rightarrow 0 ~~{\rm{as}}~~ n \rightarrow \infty$$
by Theorem \ref{th6}.
On the other hand, by assumption, since $f\in L^1(S^1,dz)$,
$$\bigintss_{S^1}z^k\phi_n(z)f(z)dz \rightarrow \bigintss_{S^1}z^k\phi (z) f(z)dz$$

Now for $k < 0, \int_{S^1}z^k S_n(z)dz =0$, so for $k < 0$,
$$\bigintss_{S^1}z^k\phi (z)f(z)dz =0$$

By F and M Riesz theorem $\phi f$ is either the zero function or a non-zero function in $H^1$. In the first case
$f$ is the zero function a.e $dz$, since $\phi$ is assumed to be non-vanishing a.e. $(dz)$ on the set where $f$ is positive. In the second case
 $\mid\phi f\mid$ has an integrable log, which implies that $f$ has an integrable log. Thus $f$ is either the zero function or has  an integrable log.
\end{proof}
\begin{rem}\label{rem4}\textnormal
The proof of Theorem \ref{th7} in fact shows that any weak limit $\phi$ of $\phi_n$'s  either never vanishes or vanishes on the set where $f$ does not vanish. Suppose $S_n$ has degree $m_n$ and let $z_1, z_2, \cdots, z_{m_n}$ be the zeros of $S_n$, counting multiplicity. Since $S_n$ is outer, $\mid z_j\mid \geq 1, j=1,2,\cdots, m_n$, whence
$\mid \frac{1}{z_j}\mid \leq 1, j =1,2,\cdots, m_n$, so the function $(1 - \frac{z}{z_j}), j =1,2,\cdots, m_n$ has  continuous arguments except when $z_j$ has absolute value 1, in which case $z_j$ is the only point where the
argument is not defined, and a continuous argument can be defined at all other points. Thus the polynomials $ B_n \egdef\prod_{j=1}^{m_n}(1 - \frac{z}{z_j})$ admits a continuous argument, denoted by $A_n$,  except at points $z_j$ with $\mid z_j\mid =1$. If $\lim_{n\rightarrow \infty} A_n(z)$ exists at almost every point where $\frac{d\mu}{dz}$
is positive, then it is clear that $\phi_n, n=1,2,\cdots$ admit a weak limit not vanishing a.e. on the set $\{z: \ds \frac{d\mu}{dz} >0\}$. Theorem \ref{th7}  is a soft version in the context of generalized  Riesz product of similar results in the context of
lacunary series (see Theorem 1.1 and  Theorem 6.4 in \cite[T1, p.202]{Zygmund}).).
\end{rem}
   View the functions $S_n(z), n=1,2,\cdots$ as outer analytic functions on the open unit disk.
From  weak dichotomy theorem 1.1, we immediately see that $S_n, n=1,2,\cdots$ converge uniformly
on every compact subset of the open unit disk to a function which is non-zero and in $H^1$ if $\beta$ is positive and the identically zero function if $\beta$ is zero.
We have, using notation from $H^p$ theory, with $0\leq r <1$
$$\lim_{r\rightarrow 1}\frac{1}{2\pi}\bigintss_0^{2\pi}\left| S_n(re^{i\theta})\right| d\theta = \frac{1}{2\pi}\bigintss_0^{2\pi}\left| S_n(e^{i\theta})\right| d\theta$$

\noindent{}We prefer to write this in our notation as

$$\lim_{r\rightarrow 1}\bigintss_{S^1}\left| S_n(rz)\right| dz = \bigintss_{S^1}\left| S_n(z)\right| dz$$

Letting $n\rightarrow \infty$ we get

$$\lim_{n\rightarrow \infty}\Biggl(\lim_{r\rightarrow 1}\bigintss_{S^1}\mid S_n(rz)\mid dz\Biggr) = \lim_{n\rightarrow \infty}\Biggl(\bigintss_{S^1}\mid S_n(z)\mid dz\Biggr)=\bigintss \sqrt{\frac{d\mu}{dz}}~dz.$$
However, in general one can not  interchange the order of taking limits and write this as
$$=\lim_{r\rightarrow 1}\lim_{n\rightarrow \infty}\bigintss_{S^1}\mid S_n(rz)\mid dz,$$
for that would immediately establish the singularity of $\mu$ with respect to the Lebesgue measure when $\beta =0$, which is false in general, see remark 6.7.

\section{Non-Singular Rank One Maps and Flat Polynomials.}

In this section we will discuss generalized Riesz product in connection with spectral questions about non-singular and measure preserving rank one transformations. We will
 give necessary and sufficient conditions under which a generalized Riesz product is the maximal spectral type (up to possibly a discrete component) of a unitary operator associated with a rank one non-singular transformation and certain functions of absolute value one.
  We will expose in more detail and generality, possibly adding new perspective, the known connection of these questions (see \cite{Guenais}, \cite{down}) with problems about flat polynomials. Proposition 5.2 is particularly useful in this discussion.\\

\paragraph{\textbf{Non-Singular Rank One Maps.}}

Let $T$ be a non-singular rank one transformation obtained by cutting and stacking \cite{Friedman}. This is done as follows. Let $\Omega_0 = \Omega_{0,0}$ denote the unit interval. At stage one of the construction we divide $\Omega_0$ into $m_1$ pairwise disjoint intervals, $\Omega_{0,1}, \Omega_{1, 1} \cdots, \Omega_{m_1-1, 1}$, of lengths $p_{0,1}, p_{1,1},\cdots, p_{m_1-1, 1}$, respectively, each $p_{i,j}$ being positive. Obviously $\sum_{j=0}^{m_1-1}p_{j,1}=1$. For each $j, 0\leq j \leq m_1-2$,  we stack $a_{j,1} \geq 0$ pairwise disjoint intervals of length $p_{j,1}$ on  $\Omega_{j,1}$. Each interval is mapped linearly onto the one above it, except that $a_{j,1}$-th spacer is mapped linearly onto $\Omega_{j+1, 1}$, $0 \leq j \leq m_1-2$.
We thus get a stack of certain height $h_1$, together with a map $T$ which is defined on all intervals of the stack except the interval at the top of the stack.  Note that if $p_{j,1} \neq p_{j+1,1}$ for  some $j$, $T_1$ will not be measure preserving.This completes the first stage of the construction.\\

 At the $k$-th stage we divide the stack obtained at the the $(k-1)$-th stage
in the ratios
$$p_{0,k}, p_{1,k}, \cdots, p_{m_k-1, k}, \sum_{i=0}^{m_k-1}p_{i,k} =1, $$
where each $p_{i,k}$ is positive. The spacers are added in the usual manner by which we mean that the spacers stacked above the $j$-th column are all of the same length which is the length of the top piece of the $j$-th column. The extension of $T$ to the spacers is done linearly as usual. Note that the top of the spacers above the $j$-th column is mapped linearly onto the bottom of the $(j+1)$-th
column, so that if $p_{j,k} \neq p_{{j+1},k}$, $T$ will not be measure preserving. Note that
at the $k$-th stage the measure is defined only on the algebra ${\Gamma }_k$ generated by the levels of the $k$-th stack, except the top piece. The resulting $T$, after all the stages of the construction are completed, is defined on the space $X$ consisting increasing union of stack intervals (sans $\cap_{k=1}^\infty \Omega_{m_k-1,k}$). Let $\nu$ denote the  Lebesgue measure defined on the $\sigma$-algebra  $\Gamma $ generated by $\cup_{n=1}^\infty{\Gamma}_n$. Note that $\prod_{j=1}^k p_{m_j-1,j}$ is the measure of the top piece of the column of height $h_k$ at the end of $k^{th}$ stage of construction. We require that this goes to 0 as as $k\rightarrow 0$. This ensures that $T$ is eventually defined for almost every point of $\Omega_0$.   Note that $T$ is non-singular (see remark below), ergodic with respect to $\nu$, and $\frac{d\nu\circ T}{d\nu}$ is constant on all but the top layer of every stack. If no spacers are added at every stage of the construction, the resulting transformation will be called non-singular odometer. \\

\begin{rem}\label{rem2}

 The transformation $T$ is non-singular in the sense that $m(T^{-1}(A))$ =$ 0$ whenever $m(A) =0$.
For each $j$, let $p_{l_j, j} = \max\{\{p_{i,j}: 0 \leq i\leq m_j-1\}$.

Consider the case when $\prod_{j=1}^\infty p_{l_j,j} > 0$. Then $\sum_{j=0}^\infty (1 - p_{l_j,j}) < \infty$. Now $\lambda_k \egdef \prod_{j=1}^k p_{l_j,j}$ is the length of the largest of subinterval of $[0,1)$ which appears as a level after the $k^{th}$ stage of construction. For each $k$, let $I_k$ denote this level. Then $I_{k+1} \subset I_k$,
the length of $I_{k+1} = \lambda_{k+1} = \lambda_k\times p_{l_{k+1}, k+1}$ and $W \egdef \cap_{k=1}^\infty I_k$ has positive length = $\prod_{j=1}^\infty p_{l_j, j}$. The Lebesgue measure of $ (I_k - I_{k+1})$ is $\lambda_k - \lambda_{k+1}$. Write $L_k = (\cup_{j= -a_k}^{b_k}T^j(I_k- I_{k+1}))\cap [0,1)$, where $T^{a_k}\Omega_{0,k} = I_k$, and $b_k = h_k-1 - a_k$.
Then the lebesgue measure of $L_k$ is $1 - p_{l_{k+1}, k+1}$, so by Borel Canterlli Lemma
the Lebesgue measure of $L \egdef \limsup_{k\rightarrow \infty} L_k = \cap_{k=1}^\infty\cup_{j=k}^\infty L_j$ is zero. Now if $x \in [0,1) - L$, then $x$ is in at most finitely many $L_j$s.
This means that either $x \in W$ or for some fixed $y\in W$ and for some fixed integer $n(x)$, $x = T^{n(x)}y$. Thus we see that when $\prod_{j=1}^\infty p_{l_j,j}$is non-zero,
$T$ induces a dissipative transformation on $[0,1)$ which implies that $T$ itself is dissipative in this case. There are two subcases: (i) if $l_j =0$ for all $j$ bigger than a fixed  integer $N >0$, then $T$ is non-invertible and dissipative; $W$ is the required wandering set which admits only finitely many negative iterates, but admits all positive iterates; (ii) in case $l_j \neq 0$
for infinitely many $j$, then $T $ is invertible and dissipative, $W$ admits pairwise disjoint iterates over all integers. Note that measure is defined on the
$\sigma$-algebra generated by levels of the stacks, and we really have a discrete measure space, and ergodicity holds.\\

In case $\prod_{j=1}^\infty p_{l_j,j} =0$, then $T$ is defined on an atomfree measure space and the ergodicity of $T$ follows from the usual Lebesgue density argument.\\

 \end{rem}
\paragraph{\textbf{Unitary Operators $U_T$ and $V_\phi$.}}

Let $\phi$ be a function on $X$ of absolute value 1 which is constant  on all but the top layer of every stack. On $L^2(X, \Gamma,\nu)$ define
$$(U_T f)(x)  = \Big(\frac{d\nu\circ T}{d\nu}(x)\Big)^{1/2} f(Tx), f \in L^2(X, \Gamma , \nu)$$
$$(V_\phi f)(x)  = (Vf)(x) = \phi(x)\cdot (U_Tf)(x), f \in L^2(X, \Gamma, \nu).$$
$U_T$, and $V$ are unitary operators, except  when $\prod_{k=1}^\infty p_{0,k} > 0$, in which case
$U_T, V_\phi$ are isometries isomorphic to the shift on $l^2$. The following argument,
which is for the case when $V$ is unitary, can be modified suitably to cover the case
of when is an an isometry.

$$(U^n_Tf)(x) = \Big(\frac{d\nu\circ T^n}{d\nu}(x)\Big)^{1/2}f(T^nx),$$

$$(V^nf)(x)=\prod_{j=0}^{n-1}\phi(T^j(x))\Big(\frac{d\nu\circ T^n}{d\nu}(x)\Big)^{1/2}f(T^nx).$$

It is known \cite{Nadkarni1} that the $V$ has simple spectrum whose maximal spectral type (except possibly for some discrete part) is given by the generalized Riesz product
$$\prod_{j=1}^\infty p_{0,j}\mid P_j(z)\mid^2,$$
where
$$P_j(z) = 1 + c_{1,j}\Big(\frac{p_{1,j}}{p_{0,j}}\Big)^{1/2}z^{-R_{1,j}} +\cdots + c_{m_j-1,j}\Big(\frac{p_{m_j-1,j}}{p_{0,j}}\Big)^{1/2}z^{-R_{m_j-1,j}}$$

The constants $c_{i,j}, 1\leq i \leq m_j-1, j = 1,2,\cdots$, are of absolute value 1. They are determined by $\phi$. The exponent $R_{i,j}, 1 \leq i \leq m_j-1$,$ j =1,2,\cdots$, is the
$i$-th return time of a point in $\Omega_{0,j}$ into $\Omega_{0,j-1}$. It is equal to
$$R_{i,j} = ih_{j-1} + a_{0,j} + a_{1,j} + \cdots +a_{i-1,j}, 1 \leq i \leq m_{j}-1$$
We give the steps involved in proving this as it will allow us to make some needed observations. Write $Tf = f\circ T$. We have

$$(V^{-n}f)(\cdot) = T^{-n}\circ\Big(\Big(\prod_{j=0}^{n-1}\phi(T^j(\cdot))\Big)^{-1}(\frac{d(\nu\circ T^n)}{d\nu}(\cdot))^{-1/2}f(\cdot)\Big)$$
$$= \Big(\prod_{j=0}^{n-1}\phi (T^{j-n}(\cdot))\Big)^{-1}\Big(\frac{d\nu\circ T^n}{d\nu}(T^{-n}(\cdot))\Big)^{-1/2}f(T^{-n}(\cdot)),$$

whence
$$(T^{-n}f)(\cdot) =  \prod_{j=0}^{n-1}\phi (T^{j-n}(\cdot))\Big(\frac{d(\nu\circ T^n)}{d\nu}\Big)^{1/2}(T^{-n}(\cdot))(V^{-n}f)(\cdot)$$

Let $\Omega_{0,k-1}$ denote the base of the stack of height $h_{k-1}$ after $(k-1)$-th stage of construction. Let $\Omega_{0,k}, \Omega_{1,k}, \cdots, \Omega_{m_k-1, k}$ be the partition of $\Omega_{0,k-1}$ during the $k$-th stage of construction, and let $a_{i,k}$ denote the number of spacers put on the column with base $\Omega_{j, k}$, $0 \leq j < m_k-1$. We have
$$\Omega_{0,k-1} = \cup_{j=0}^{m_k-1}T^{jh_{k-1}+ \sum_{i=0}^{j-1}a_{i,k}}(\Omega_{0,k})$$
$$= \cup_{j=0}^{m_k-1}T^{R_{j,k}}(\Omega_{0,k})$$
where

$$R_{j,k} = jh_{k-1} + \sum_{i=0}^{j-1}a_{i,k}$$

$$= \rm{the}~{j-}{\rm{th}} ~~ {\rm{return ~~time ~~of ~~a ~~point ~~in}}~~\Omega_{0,k} ~~{\rm{into}}~~\Omega_{0,k-1}.$$
$$1_{\Omega_{0,k-1}} = \sum_{j=0}^{m_k-1}1_{\Omega_{0,k}}\circ T^{-R_{j,k}},$$
$$=\sum_{j=0}^{{m_k-1}}c_{j,k}\Big(\frac{d\nu\circ T^{R_{j,k}}}{d\nu}(T^{-R_{j,k}})\Big)^{1/2}(\cdot)(V^{-R_{j,k}}1_{\Omega_{0,k}})(\cdot)$$
where $ c_{j,k} =\displaystyle \prod_{j=0}^{R_{j,k}-1}\phi (T^{j-R_{j,k}}(\cdot))$, a constant of absolute value one. Note that the constants $c_{j,k}$ can be preassigned and $\phi$ can be so defined that the above relation holds for all $(j,k)$.
We now observe that for $x\notin T^{R_{j,k}}\Omega_{0,k}$,
$$V^{-R_{j,k}}1_{\Omega_{0,k}}(x) = 0,$$  and that for $x\in T^{R_{j,k}}\Omega_{0,k}$,
$$\frac{d\nu\circ T^{R_{j,k}}}{d\nu}(T^{-R_{j,k}}(x)) = \frac{p_{j,k}}{p_{0,k}}.$$
We thus have
$$1_{\Omega_{0,k-1}}  =\sum_{j=0}^{{m_k-1}}c_{j,k}\Bigg(\frac{p_{j,k}}{p_{0,k}}\Bigg)^{1/2}(V^{-R_{j,k}}1_{\Omega_{0,k}})(\cdot)$$
Let us normalize $1_{\Omega_{0,k}}$ and write
$$f_k = \Big(\frac{1}{ m(\Omega_{0,k})}\Big)^{1/2}1_{\Omega_{0,k}} = \Bigg(\frac{1}{(\prod_{j=1}^kp_{0,j})}\Bigg)^{1/2}1_{\Omega_{0,k}}$$
$$f_{k-1} = (p_{0,k})^{1/2}\Bigg(1 + c_{1,k}\Big(\frac{p_{1,k}}{p_{0,k}}\Big)^{1/2}V^{-R_{1,k}} + \cdots + c_{m_k-1,k}\Big(\frac{p_{m_k-1,k}}{p_{0,k}}\Big)^{1/2}V^{-R_{m_k-1,k}}\Bigg)f_k$$
Now $m(\Omega_{0,0}) = 1$ so $f_0 = 1_{\Omega_{0,0}}$. We have by iteration
$$f_0 = (\prod_{j=1}^kP_j(V))f_k,$$
where
$$P_j(z) = (p_{0,j})^{1/2}\Bigg(1 + c_{1,j}\Big(\frac{p_{1,j}}{p_{0,j}}\Big)^{1/2}z^{-R_{1,j}} + \cdots + c_{m_j-1,j}\Big(\frac{p_{m_j-1,j}}{p_{0,j}}\Big)^{1/2}z^{-R_{m_j-1,j}}\Bigg)$$

Let $V^n = \int_{S^1}z^{-n}dE, n \in \mathbb Z$, be the spectral resolution of the unitary group $V^n, n \in \mathbb Z$, and let

$$ (V^nf_k, f_k) = \int_{S^1}z^{-n}(E(dz)f_k,f_k) =\int_{S^1}z^{-n}d\sigma_k$$

where $\sigma_k (\cdot) = (E(\cdot)f_k, f_k)$\\

We therefore have for all integers $l$

$$(V^lf_0,f_0) = \int_{S^1}z^{-l}d\sigma_0 =\int_{S^1}z^{-l} \prod_{j=0}^k\mid P_j(z)\mid^2d\sigma_k,$$

whence
$$d\sigma_{0} = \prod_{j=1}^k\mid P_j(z)\mid^2d\sigma_k$$

Now we will show, as in the measure preserving case \cite{Nadkarni1}, that $\sigma_0$ is the generalized Riesz product:

$$\sigma_0 = \prod_{j=1}^\infty\mid P_j(z)\mid^2 .$$

Let $N_k$ denote the the set of integers consisting of zero together with the entry times
of a point in $\Omega_{0,k}$ into $\Omega_{0,0}$ which are less than the height $h_k$ of the $k^{th}$ stack.

We have
 $$f_0 = \Big(\prod_{j=1}^kP_j(V)\Big)f_k = Q_k(V)f_k,$$
where
$$Q_k(z) = \prod_{j=1}^k P_j(z) \egdef \sum_{j=0}^{h_k-1}q_j(k)z^j = \sum_{j \in N_k}q_j(k)z^j.$$
an expansion of the product of dissociated polynomials $P_1, P_2, \cdots, P_k$
Note that for
\begin{enumerate}[(i)]
\item $\mid q_j\mid \leq 1$ and
 \item $\mid q_r\mid \leq  \prod_{j=1}^{k-1} p_{m_{j-1},j} \rightarrow 0$ as $k\rightarrow \infty$, for $ h_k - h_{k-1} < r < h_k$,
\end{enumerate}
$$\int_{S^1}z^{n}\mid Q_k\mid^2dz = \sum_{r-s+n=0}q_r(k)\overline {q_s(k)}$$
where $r,s \in N_k$.

Now fix $n  \in \mathbb Z$ and let $k$ be so large that the first return time for any $x\in \Omega_{0,k}$ back to $\Omega_{0,k}$ is bigger than $\mid n\mid$, i.e, $k$ is so large that
$h_k \geq \mid n\mid$. We actually choose $k$ so large that $\mid n\mid < \frac{h_{k-1}}{2}$. If $r,s \in N_k$ then $r-s+n $ can never exceed or equal the second return of of an $x \in \Omega_{0,k}$ back to $\Omega_{0,k}$ (under $T$ or $T^{-1})$. Moreover there can be at most $n^2$ pairs $(r,s)$ with $r,s \in N_k$ with $T^{r+n-s}\Omega_{0,k}\cap \Omega_{0,k} \neq \emptyset$. For suppose $n >0$ and $T^{r+n-s}\Omega_{0,k}\cap \Omega_{0,k} \neq \emptyset$ and $r-n-r \neq 0$, $r,s \in N_k$. Then $r+n-s=u$ where $u$ is the first return time of a point $x \in \Omega_{0,k}$ back to $\Omega_{0,k} \geq h_k$.
$s =r+n-u$. Since $n,r,s$ are less than $h_k$, $h_k \leq u$ and $s \geq 0$, we have $0 \leq  s <n$ and $n-s +r = u > h_k,$ so $r \geq h_k -(n-s) \geq  h_k -n$. Thus there can be at most $n^2$ pairs $(r,s)$ with $r,s \in N_k$ such that $T^{n+r-s}\Omega_{0,k}\cap\Omega_{0,k} \neq \emptyset$. Thus if $ T^{n+r-s}\Omega_{0,k}\cap \Omega_{0,k} \neq \emptyset, r,s \in N_k$ then $n+r-s =0$ except for at most $n^2$ pairs $(r,s)$, $r,s \in N_k$. This in turn implies that $(V^{n+r-s}1_{\Omega_{0,k}},1_{\Omega_{0,k}}) = 0$ except when $n+r-s =0$  and at
most $n^2$ other pair (r,s), $r,s\in N_k$.

$$(V^nf_0,f_0) = (V^nQ_k(V)f_k,Q_k(V)f_k) = (V^n\mid Q_k\mid^2(V)f_k,f_k)$$
$$=\sum_{n+r-s=0, r,s \in N_k}q_r{\overline {q_s}}(V^{n+r-s}f_k,f_k) + \sum_1$$
$$\sum_{r-s+n=0}q_r(k)\overline {q_s(k)} + \sum_1$$

where $\sum_1$ is a sum of at most $n^2$ terms of the type
$$q_r{\overline{q_s}}(V^{n+r-s}f_k,f_k) , n+r-s \neq 0$$

Now $h_k - h_{k-1} < h_k-n \leq r \leq h_k-1$, so that $\mid q_r(k)\mid \leq \prod_{j=1}^{k-1}p_{m_{j}-1, j} \rightarrow 0$ as $k\rightarrow \infty$. Clearly then the sum $\sum_1$ goes to zero as $k\rightarrow \infty$ and the claim is proved.\\

\begin{rem}\label{rem2} Let $p_{l_j,j}$ be as in remark 6.1. Note  that if $\prod_{j=1}^\infty p_{l_j,j} > 0$ then $T$ is dissipative, so $V_\phi$ has Lebesgue spectrum. For the subcase  when, in addition,
$l_j= 0$ for all but finitely many $j$, then Mahler measure of $\sigma_0$ is positive.\\

If $\prod_{j=1}^\infty p_{l_j,j} =0$ and $V_\phi = U_T$, then it can be verified that $\sum_{k=1}^\infty\mid \hat{\sigma_0}(k)\mid^2 = \infty$. In addition if $\sum_{j=-\infty}^\infty p_{m_j-1,j}p_{0,j} = \infty$ then one  can adapt the method of I. Klemes and K. Reinhold \cite {KlemesR} to show that $\sigma_0$ is singular to Lebesgue measure.\\
\end{rem}

\paragraph{\textbf{Generalized Riesz Products of Dynamical Origin.}}
Consider now the polynomials appearing in the above generalized Riesz product.
$$P_j(z) = (p_{0,j})^{1/2}\Bigg(1 + c_{1,j}\Big(\frac{p_{1,j}}{p_{0,j}}\Big)^{1/2}z^{-R_{1,j}} + \cdots + c_{m_j-1,j}\Big(\frac{p_{m_j-1,j}}{p_{0,j}}\Big)^{1/2}z^{-R_{m_j-1,j}}\Bigg)$$

 The exponent $R_{i,j}, 1 \leq i \leq m_j-1, j =1,2,\cdots$, is the
$i$-th return time of a point in $\Omega_{0,j}$ into $\Omega_{0,j-1}$. Also
$$R_{i,j} = ih_{j-1} + a_{0,j} + a_{1,j} + \cdots +a_{i-1,j}, 1 \leq i \leq m_{j}-1$$
where $h_{j-1}$ is the height of the tower after $(j-1)$-th stage of the construction is complete, and $a_{k,j}$ is the number of spacers on the $k$-th column, $ 0 \leq k\leq m_j-2$.
We observe that\\
\begin{enumerate}
\item  $h_1 = R_{m_{1}-1,1} +1$,\\
\item  $R_{1,j} \geq h_{j-1} > R_{m_{j-1},j-1}$,\\
\item   $R_{i+1, j} - R_{i,j} \geq h_{j-1}$. \\
\end{enumerate}
  These properties (1), (2), (3) of the powers $R_{i,j}$, $1 \leq i \leq m_j -1, j=1,2,\cdots$ indeed characterize generalized Riesz products which arise from nonsingular rank one transformations (together with a $\phi$) in the above fashion. More precisely consider a generalized Riesz product
$$\prod_{j=1}^\infty\mid Q_j(z)\mid^2. $$
where $$Q_j(z) = \sum_{i=0}^{n_j} b_{i,j}z^{r_{i,j}},~~ b_{i,j} \neq  0, ~~\sum_{i=0}^{n_j}\mid b_{i,j}\mid^2 =1, \prod_{j=1}^\infty \mid b_{n_j,j}\mid =0.$$
Define inductively: $$h_0 = 1, h_1 = r_{n_1,1} +h_0, \cdots , h_j = r_{n_j,j} +h_{j-1}, j \geq 2$$
Note that $ h_j > r_{n_j,j}$.\\

\begin{Prop}\label{prop1}
 Assume that for each $j=1,2,\cdots$,
$$r_{1,j} \geq h_{j-1}, ~~~r_{i+1,j} - r_{i,j} \geq h_{j-1}$$
Then $r_{i,j}, h_j$, satisfy (1), (2) and (3). The generalized product $\prod_{j=1}^\infty \mid Q_j\mid^2$ describes the maximal spectral type (up to possibly a discrete part) of a suitable $V_\phi$.
\end{Prop}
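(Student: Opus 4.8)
The plan is to run the construction of the ``Generalized Riesz Products of Dynamical Origin'' subsection in reverse: starting from the data $b_{i,j}, r_{i,j}$ I will read off a cutting parameter, column widths, spacer numbers, and a function $\phi$ so that the polynomials $P_j$ produced by the rank one machine coincide with the given $Q_j$ (up to an innocuous reflection), and then quote the spectral computation already carried out above.

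First, that $r_{i,j}, h_j$ satisfy (1), (2), (3) is immediate. Property (1) is the case $j=1$ of the recursion $h_j = r_{n_j,j}+h_{j-1}$ together with $h_0=1$. Property (3) is literally the hypothesis $r_{i+1,j}-r_{i,j}\ge h_{j-1}$. For (2), the inequality $r_{1,j}\ge h_{j-1}$ is again a hypothesis, while $h_{j-1}>r_{n_{j-1},j-1}$ follows from $h_{j-1}=r_{n_{j-1},j-1}+h_{j-2}$ and $h_{j-2}\ge 1$.

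Next comes the construction. Writing each $b_{i,j}$ in polar form and absorbing a unimodular constant into $Q_j$ (which does not affect $|Q_j|^2$), I may assume $b_{0,j}>0$ and $r_{0,j}=0$. I then set the cutting parameter $m_j=n_j+1$, take the column widths $p_{i,j}=|b_{i,j}|^2$ (strictly positive since $b_{i,j}\ne 0$, with $\sum_{i=0}^{n_j}p_{i,j}=\sum_i |b_{i,j}|^2=1$), and assign the phases $c_{i,j}=b_{i,j}/|b_{i,j}|$. The number of spacers is dictated by the requirement $R_{i,j}=r_{i,j}$: from $R_{i,j}=i\,h_{j-1}+\sum_{k=0}^{i-1}a_{k,j}$ one is forced to take $a_{0,j}=r_{1,j}-h_{j-1}$ and $a_{i,j}=(r_{i+1,j}-r_{i,j})-h_{j-1}$. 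This is exactly the step where the hypotheses are needed: they guarantee $a_{i,j}\ge 0$, so these spacer counts are legitimate non-negative integers. Finally $\phi$ is defined, as noted in the derivation above, so that the preassigned constants $c_{i,j}$ are realized.

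It then remains to check consistency and to invoke the earlier computation. One verifies that the height recursion $h_j=m_j h_{j-1}+\sum_{i=0}^{n_j-1}a_{i,j}$ of the constructed tower reproduces $h_j=r_{n_j,j}+h_{j-1}$, so the $h_j$ used to define the spacers are indeed the true tower heights; and that $p_{m_j-1,j}=|b_{n_j,j}|^2$, whence the hypothesis $\prod_j |b_{n_j,j}|=0$ says precisely that the top-piece measure $\prod_j p_{m_j-1,j}$ tends to $0$. By the non-singularity analysis given above this makes $T$ a bona fide non-singular rank one transformation defined a.e. (on an atom-free space in the relevant case) and $V_\phi$ unitary, or an isometry when $\prod_j p_{0,j}>0$, handled by the same argument. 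The $j$-th spectral polynomial of this system is $(p_{0,j})^{1/2}\bigl(1+\sum_i c_{i,j}(p_{i,j}/p_{0,j})^{1/2}z^{-R_{i,j}}\bigr)$, which by our choices equals $\sum_i b_{i,j}z^{-r_{i,j}}=Q_j(\bar z)$ on $S^1$; hence $|P_j|^2=|Q_j(\bar z)|^2$, and after the harmless reflection $z\mapsto\bar z$ (equivalently, replacing $V_\phi$ by its adjoint, or expanding in forward iterates) the maximal spectral type $\sigma_0=\prod_j |P_j|^2$ becomes $\prod_j |Q_j|^2$. That $\sigma_0$ really equals this infinite product is the content of the computation already performed above, whose one nontrivial estimate $|q_r(k)|\le \prod_{j=1}^{k-1}p_{m_j-1,j}\to 0$ is driven by the same hypothesis $\prod_j |b_{n_j,j}|=0$. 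The main obstacle is thus not a single hard inequality but the bookkeeping of this reverse construction, most pointedly the confirmation that the forced spacer counts are non-negative, which is exactly where the gap conditions $r_{1,j}\ge h_{j-1}$ and $r_{i+1,j}-r_{i,j}\ge h_{j-1}$ enter decisively.
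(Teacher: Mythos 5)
Your proof is correct and is essentially the paper's own argument: the paper likewise defines the rank one map by cutting parameters $p_{i,j}=\mid b_{i,j}\mid^2$, spacers $a_{i-1,j}=r_{i,j}-r_{i-1,j}-h_{j-1}$, and $\phi$ realizing the phases $b_{i,j}/\mid b_{i,j}\mid$, then appeals to the spectral computation carried out earlier in the section. You merely spell out the routine verifications the paper leaves implicit (non-negativity of the spacer counts, the height recursion, the top-piece condition from $\prod_j\mid b_{n_j,j}\mid=0$, and the reflection $z\mapsto\bar z$ reconciling the powers $z^{-R_{i,j}}$ with $z^{r_{i,j}}$), all of which are sound.
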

\begin{proof}
 That the $r_{i,j}, h_j$ satisfy (1), (2), (3) is obvious. The needed non-singular $T$ is given by cutting parameters $ p_{i,j} =\mid b_{i,j}\mid^2, i= 0, 1, \cdots, n_j, j = 1,2,\cdots$, and spacers $a_{i-1,j} = r_{i,j} - r_{i-1,j}- h_{j-1}$, $1 \leq i \leq n_j, j =1,2,\cdots$. The needed $\phi$ (which need not be unique) is given by constants $\frac{b_{i,j}}{\mid b_{i,j}\mid}, 0\leq i \leq n_j, j =1,2,\cdots$. This proves the proposition.\\
\end{proof}

\begin{Def}\label{def1}
 A generalized Riesz product $\mu = \prod_{j=1}^\infty\mid Q_j(z)\mid^2$,
where $Q_j(z) = \sum_{i=0}^{n_j} b_{i,j}z^{r_{i,j}}, b_{i,j} \neq 0, \sum_{i=0}^{n_j}\mid b_{i,j}\mid^2 =1$, $\prod_{j=1}^\infty\mid b_{n_j, j}\mid =0$, is said to be of dynamical origin if
 with $$h_0 = 1, h_1 = r_{n_1,1} +h_0, \cdots , h_j = r_{n_j,j} +h_{j-1}, j \geq 2$$
it is true that for  $j=1,2,\cdots$,
$$r_{1,j} \geq h_{j-1}, ~~~r_{i+1,j} - r_{i,j} \geq h_{j-1}$$
If, in addition, the coefficients $b_{i,j}$ are all positive, then we say that $\mu$ is of purely dynamical origin.\\
\end{Def}

 \paragraph{\textbf{Flat Polynomials and Generalized Riesz Products.}}

 \begin{lem}\label{lem1}

Given a sequence $P_n = \sum_{j=0}^{m_n} a_{j, n}z^j, , n=1,2,\cdots$ of analytic trigonometric polynomials in $L^2(S^1,dz)$ with non-zero constant terms and $L^2(S^1,dz)$ norm 1, $\prod_{n=1}^\infty \mid a_{m_n, n}\mid =0$, there exist a sequence of positive integers $N_1, N_2, \cdots$ such that
$$\prod_{j=1}^\infty\mid P_j(z^{N_j})\mid^2$$
is a generalized Riesz product of dynamical origin.
\end{lem}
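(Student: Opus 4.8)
The plan is to manufacture the dilations $N_1,N_2,\dots$ recursively, building the heights $h_j$ along the way, so that dilating $P_j$ by $N_j$ pushes all of its nonzero frequencies past the total degree accumulated by $P_1(z^{N_1})\cdots P_{j-1}(z^{N_{j-1}})$. Concretely I would set $h_0=1$ and, having chosen $N_1,\dots,N_{j-1}$ and hence $h_{j-1}$, list the indices $l$ with $a_{l,j}\neq 0$ as $0=s_0<s_1<\cdots<s_{n_j}=m_j$ (here $s_0=0$ because the constant term is nonzero and $s_{n_j}=m_j$ because $a_{m_j,j}\neq 0$), pick any integer $N_j\ge h_{j-1}$, and put $Q_j(z)=P_j(z^{N_j})=\sum_{i=0}^{n_j}b_{i,j}z^{r_{i,j}}$ with $b_{i,j}=a_{s_i,j}$ and $r_{i,j}=s_iN_j$, finally defining $h_j=r_{n_j,j}+h_{j-1}=m_jN_j+h_{j-1}$.

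With this bookkeeping in place the verification is routine. The coefficient requirements in the definition of dynamical origin hold immediately: $b_{i,j}=a_{s_i,j}\neq 0$; the dilation $z\mapsto z^{N_j}$ preserves the $L^2(S^1,dz)$ norm, so $\sum_i|b_{i,j}|^2=\sum_l|a_{l,j}|^2=1$; and $b_{n_j,j}=a_{m_j,j}$, whence $\prod_j|b_{n_j,j}|=\prod_j|a_{m_j,j}|=0$ by hypothesis. The defining gap inequalities are forced by the choice $N_j\ge h_{j-1}$: since $s_1\ge 1$ and $s_{i+1}-s_i\ge 1$ one gets $r_{1,j}=s_1N_j\ge N_j\ge h_{j-1}$ and $r_{i+1,j}-r_{i,j}=(s_{i+1}-s_i)N_j\ge N_j\ge h_{j-1}$. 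Thus the exponents and heights satisfy exactly the conditions in the definition of dynamical origin.

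It remains to confirm that $\prod_{j=1}^\infty|Q_j|^2$ genuinely meets the definition of a generalized Riesz product given in Section~2. The gap conditions make $Q_1,Q_2,\dots$ dissociated: the degree of $Q_1\cdots Q_{k-1}$ is $\sum_{l=1}^{k-1}(h_l-h_{l-1})=h_{k-1}-1$, so all its frequencies lie in $[0,h_{k-1}-1]$, while the frequencies $r_{i,k}$ of $Q_k$ are spaced at least $h_{k-1}$ apart with $r_{1,k}\ge h_{k-1}$; hence in any sum $\sum_l r_{i_l,l}$ the top block $r_{i_k,k}$ is the unique translate whose window of length $h_{k-1}$ contains the total, and descent shows every exponent in the formal product is distinct. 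Dissociation yields $\int_{S^1}|Q_{i_1}\cdots Q_{i_k}|^2\,dz=\prod_l\int_{S^1}|Q_{i_l}|^2\,dz=1$ for every finite subsequence, which is condition (i); the existence of the weak limit, condition (ii), then follows from the proposition of this section, by which the gap conditions already exhibit $\prod_j|Q_j|^2$ as the maximal spectral type of a suitable $V_\phi$. The one point needing genuine care, and the main obstacle, is precisely this passage from the purely arithmetic gap conditions to the analytic assertion that the weak limit exists and is a probability measure; everything else reduces to the choice of $N_j$ and an index-matching check that the dilation transports the hypotheses on $P_j$ onto the coefficient hypotheses on $Q_j$.
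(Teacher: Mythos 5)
Your bookkeeping is correct as far as it goes: the coefficient conditions and the gap inequalities of the definition of dynamical origin do follow from $N_j\ge h_{j-1}$, and your Parseval argument (analytic dissociation of the $Q_j$'s forces all exponents in any finite product $Q_{i_1}\cdots Q_{i_k}$ to be distinct, whence $\int_{S^1}|Q_{i_1}\cdots Q_{i_k}|^2\,dz=1$) correctly delivers condition (i) of Definition 2.1. The genuine gap is exactly at the point you flag, condition (ii), and your proposed way around it does not work. With your minimal spacing the \emph{squared moduli} $|Q_j|^2$ are not dissociated, and it is their dissociation, not that of the analytic polynomials, that makes the infinite product well defined. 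Concretely, take $P_1=P_2=(1+z)/\sqrt2$, so $N_1=1$, $h_1=2$, and your rule permits $N_2=2$, $Q_2=(1+z^2)/\sqrt2$: the Fourier coefficient of $z$ in $|Q_1|^2$ is $\tfrac12$ but in $|Q_1Q_2|^2$ it is $\tfrac34$, and continuing with $N_j=2^{j-1}$ the partial products $|S_k|^2$ are Fej\'er kernels, whose Fourier coefficients never stabilize (they drift toward those of $\delta_1$). So under your spacing, weak convergence is a genuinely analytic assertion. Your justification for it is circular: Proposition 6.3 opens with ``consider a generalized Riesz product $\prod_{j=1}^\infty|Q_j(z)|^2$,'' i.e.\ it \emph{presupposes} conditions (i)--(ii) of Definition 2.1 and only then identifies the product as a maximal spectral type. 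The non-circular content of Section 6 is the spectral computation (the identification $\sigma_0=\prod_j|P_j|^2$ via $(V^nf_0,f_0)$ and the estimate $\sum_1\to0$), but that argument uses the vanishing of the product of top coefficients along the sequence in question; Definition 2.1(ii) requires a weak limit along \emph{every} infinite subsequence, and along a subsequence the hypothesis $\prod_n|a_{m_n,n}|=0$ can fail (e.g.\ $|a_{m_n,n}|$ summably close to $1$ for even $n$ and equal to $\tfrac1{\sqrt2}$ for odd $n$), so even this route does not cover what the definition demands.

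The paper's proof avoids all of this by making $N_j$ roughly twice as large: $N_1=1$ and $N_j\ge 2H_{j-1}$, where $H_{j-1}=N_1r_{n_1,1}+\cdots+N_{j-1}r_{n_{j-1},j-1}+h_1$ strictly exceeds the degree of $Q_1\cdots Q_{j-1}$. Every nonzero frequency of $|Q_j|^2$ is a nonzero multiple of $N_j$, hence larger than twice the largest frequency of $\prod_{l<j}|Q_l|^2$, so the polynomials $|Q_j|^2$ themselves are dissociated; then for each fixed $n$ the $n$-th Fourier coefficient of the partial products over \emph{any} subsequence is eventually constant, and conditions (i) and (ii) hold simultaneously, with no appeal to the dynamical machinery. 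Your verification of the dynamical-origin gap inequalities goes through verbatim under the stronger choice, since $2H_{j-1}\ge h_{j-1}$. In short, the missing idea is to double the spacing so as to dissociate the squared moduli rather than merely the analytic polynomials; with $N_j\ge h_{j-1}$ alone, your argument does not establish that $\prod_{j=1}^\infty|P_j(z^{N_j})|^2$ is a generalized Riesz product.
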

\begin{proof}
 For each $j\geq 1$, let
$$P_j = \sum_{i=0}^{n_j}b_{i,j}z^{r_{i,j}}, b_{i,j} \neq 0,  ~~b_{0, j} > 0, ~~\sum_{i=1}^{n_j} \mid b_{i,j}\mid^2 =1.$$
Let $N_1 =1$ and $h_1 = H_1 = r_{n_1,1}+1$. Choose $N_2 \geq 2H_1  > 2r_{n_1,1}$. Then
$${N_2\cdot r_{1,2}} > h_1, N_2(r_{i+1,2} - r_{i,2}) > h_1.$$

 Since $N_2 > 2r_{n_1,1}$ the   polynomials $\mid P_1(z^{N_1})\mid^2$ and $\mid P_2(z^{N_2})\mid^2$ are dissociated.
Consider now  $P_1(z^{N_1})P_2(z^{N_2})$. Write $H_2 = N_1r_{n_1,1} + N_2r_{n_2,2} +  h_1> N_2r_{n_2, 2} + h_1 \egdef   h_2 $.
Choose $N_3 \geq 2H_2$.
Then
$$N_3\cdot r_{1, 3} \geq h_2, N_3(r_{i+1, 3} -r_{i,3}) > h_2.$$
Since $N_3 \geq  2 H_2 > 2(N_1r(n_1,1) + N_2r(n_2,2)$ the polynomial $\mid P_3(z^{N_3})\mid^2$ is dissociated from $\mid P_1(z^{N_1})P_2(z^{N_2})\mid^2$.
Proceeding thus we get $N_j, j =1,2,\cdots$ and  polynomials  $Q_j(z) = P_j(z^{N_j}), j  =1,2,\cdots$ such that

(i) $\mid\mid Q_j \mid\mid_2 = 1$ (since $\mid\mid P_j\mid\mid_2 = 1$ and the map
$z \rightarrow z^{N_j}$ is measure preserving.)
(ii) the polynomials $\mid Q_j\mid^2, j =1,2,\cdots$ are dissociated,
(iii) for each $j\geq 1$,
$$h_{j-1} < N_{j}r_{1, j}, ~~h_{j-1} < N_{j}(r_{i+1,j} - r_{i, j}) $$

Since the polynomials $Q_j, j =1,2,\cdots$ have $L^2(S^1,dz)$ norm 1 and their absolute squares are dissociated, the generalized Riesz product $\prod_{j=1}^\infty\mid P(z^N_j)\mid^2$
is well defined. Moreover, (iii) shows that the conditions for it to arise  from a
non-singular rank one $T$ and a $\phi$ in the above fashion are satisfied. The lemma follows.
\end{proof}

An immediate application of this Lemma is the following:\\
\begin{Th}\label{th7}
 {\it Let $P_j, j =1,2,\cdots$ be a sequence of analytic trigonometric polynomials satisfying the conditions of lemma 6.5 and  such that $\mid P_j(z)\mid \rightarrow 1 $ a.e. $(dz)$ as $j \rightarrow \infty$. Then there exists a subsequence $P_{j_k}, k=1,2,\cdots$ and natural numbers $N_1 < N_2 <  \cdots$ such that the product
$\mu =\prod_{k=1}^\infty \mid P_{j_k}(z^{N_k})\mid^2$  is a generalized Riesz product of dynamical origin with $\frac{d\mu}{dz} > 0$ a.e. $(dz)$. }
\end{Th}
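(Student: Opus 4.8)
The plan is to convert the flatness hypothesis into an $L^1$ statement, feed a carefully chosen subsequence into Lemma \ref{lem1}, and then produce positivity of the density by a pointwise product estimate combined with a lower-semicontinuity argument for the weak limit. First I would turn $|P_j|\to 1$ a.e.\ into an $L^1$ estimate: since $|P_j|^2\to 1$ pointwise a.e.\ and $\int_{S^1}|P_j|^2\,dz=1=\int_{S^1}1\,dz$, Scheff\'e's lemma gives $|P_j|^2\to 1$ in $L^1(S^1,dz)$; set $\varepsilon_j=\int_{S^1}\big||P_j|^2-1\big|\,dz\to 0$. I would then record two consequences. Because $z\mapsto z^{N}$ is measure preserving, $\int_{S^1}\big||P_j(z^N)|^2-1\big|\,dz=\varepsilon_j$ for every $N$, so the substitution never enlarges the defect. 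And reading the top Fourier coefficient of $|P_j|^2=P_j\overline{P_j}$ gives $|a_{m_j,j}|\,|a_{0,j}|=\big|\widehat{|P_j|^2}(m_j)\big|\le\varepsilon_j$, a bound tying the leading coefficient $a_{m_j,j}$ to the constant coefficient $a_{0,j}$ that I will use to keep the dynamical-origin normalization under control.

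Next I would select the subsequence and invoke Lemma \ref{lem1}. Choose $j_1<j_2<\cdots$ with $\sum_k\varepsilon_{j_k}<\infty$ (possible since $\varepsilon_j\to0$), arranged so that the leading coefficients still satisfy $\prod_k|a_{m_{j_k},j_k}|=0$, which keeps the subsequence inside the hypotheses of Lemma \ref{lem1}; the coefficient bound above is the lever for reconciling summability of the $\varepsilon_{j_k}$ with this vanishing-product requirement. Applying Lemma \ref{lem1} to $(P_{j_k})_k$ yields integers $N_1<N_2<\cdots$ for which $\mu=\prod_k|P_{j_k}(z^{N_k})|^2$ is a generalized Riesz product of dynamical origin. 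Writing $a_k(z)=|P_{j_k}(z^{N_k})|^2-1\ge-1$, Tonelli and the first step give $\int_{S^1}\sum_k|a_k|\,dz=\sum_k\varepsilon_{j_k}<\infty$, so $\sum_k|a_k(z)|<\infty$ for a.e.\ $z$. For such $z$ every factor is positive, $a_k(z)\to0$, and $\sum_k\log(1+a_k(z))$ converges, so the partial products $\Pi_n(z)=\prod_{k=1}^n|P_{j_k}(z^{N_k})|^2$ converge a.e.\ to a finite, strictly positive limit $g(z)$.

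It then remains to transfer positivity to $\mu$. Each $\Pi_n\,dz$ is a probability measure with weak$^*$ limit $\mu$, so for every nonnegative continuous $h$ Fatou's lemma and weak convergence give $\int_{S^1}hg\,dz\le\liminf_n\int_{S^1}h\Pi_n\,dz=\int_{S^1}h\,d\mu$. Hence $g\,dz\le\mu$ as measures, and since $g\,dz$ is absolutely continuous this forces $g\le\frac{d\mu_a}{dz}$ a.e. As $g>0$ a.e., I conclude $\frac{d\mu}{dz}=\frac{d\mu_a}{dz}\ge g>0$ a.e., which is the assertion. I would stress that this route uses only pointwise positivity of the product and never asserts positive Mahler measure: $g>0$ a.e.\ is fully compatible with $\int_{S^1}\log g\,dz=-\infty$, because $\sum_k\int|a_k|$ controls the product pointwise while saying nothing about how negative $\int\log(1+a_k)$ may be near zeros of $P_{j_k}$; trying to route the positivity through $\prod_k\alpha_0^{(j_k)}>0$ would be both unnecessary and, in general, unavailable.

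The main obstacle I anticipate is exactly the selection step: I must produce a single subsequence that is at once of dynamical origin, meaning $\prod_k|a_{m_{j_k},j_k}|=0$, and has summable defects $\sum_k\varepsilon_{j_k}<\infty$. For abstract coefficient sequences these two demands can pull against each other (summable $\varepsilon_{j_k}$ wants $a_{m_{j_k},j_k}$ bounded away from zero, while dynamical origin wants the product of these to vanish), so it is precisely here that the flatness has to be genuinely exploited through the identity $|a_{m_j,j}|\,|a_{0,j}|\le\varepsilon_j$ rather than merely cited. Everything downstream, the a.e.\ convergence $\Pi_n\to g>0$ via Tonelli and the semicontinuity passage $g\le\frac{d\mu_a}{dz}$, is then routine; the decisive content is in making the dynamical-origin normalization coexist with enough flatness to keep the limiting product positive a.e.
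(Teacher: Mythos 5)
Your architecture is closer to the paper's than you may suspect: the paper also (i) extracts a subsequence along which flatness is quantitatively summable --- via Egorov sets $E_k$ on which $\bigl|1-|P_{j_l}(z)|\bigr|<2^{-k}$ for all $l\ge k$, with $\sum_k\bigl(1-dz(E_k)\bigr)<\infty$, in place of your Scheff\'e/$L^1$ defects $\varepsilon_j$ --- then (ii) invokes Lemma 6.5 to get the $N_k$, and (iii) proves a.e.\ convergence of the partial products to a nonzero limit by pulling the exceptional sets back under $z\mapsto z^{N_k}$ and applying Borel--Cantelli; your Tonelli argument is a clean substitute for (iii). One small slip there: $\sum_k|a_k(z)|<\infty$ does not make ``every factor positive'' --- a factor may vanish ($a_k(z)=-1$ for finitely many $k$); the paper disposes of this by noting that the set of $z$ at which some finite product vanishes is countable, and you need the same remark. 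Where you genuinely improve on the paper is the transfer step: your Fatou/weak-$*$ inequality $g\,dz\le\mu$, hence $g\le\frac{d\mu_a}{dz}$, is self-contained, whereas the paper routes positivity through Proposition 5.2, which presupposes that the constructed product is of class (L) --- a property it never verifies for this construction.

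The genuine gap is exactly where you put it, and your proposed lever does not close it. You must produce one subsequence with both $\sum_k\varepsilon_{j_k}<\infty$ and $\prod_k|a_{m_{j_k},j_k}|=0$; the latter is part of Definition 6.3 and a hypothesis of Lemma 6.5, so without it ``of dynamical origin'' is simply unavailable. The inequality $|a_{m_j,j}|\,|a_{0,j}|\le\varepsilon_j$ cannot arbitrate, because the constant term can absorb it: take $P_j(z)=\alpha_j+\beta_j z^{m_j}$ with $\beta_j=\sqrt{1-\alpha_j^2}$ and $\alpha_j^2=1/(j\log j)$. Then $\varepsilon_j=\frac{4}{\pi}\alpha_j\beta_j\to 0$ (flatness is even uniform), constant terms are nonzero, and $\prod_j\beta_j=0$ since $\sum_j\alpha_j^2=\infty$, so all hypotheses of the theorem hold; but $1-\beta_{j_k}\le\alpha_{j_k}^2\lesssim\varepsilon_{j_k}^2$, so $\sum_k\varepsilon_{j_k}<\infty$ forces $\prod_k\beta_{j_k}>0$, and no subsequence meets both of your demands. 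Worse, this example exposes the same lacuna in the paper itself: the Egorov step, requiring $\bigl|1-|P_{j_l}|\bigr|<2^{-k}$ off sets of summable measure, forces $\alpha_{j_k}\lesssim 2^{-k}$ in this example, again giving $\prod_k\beta_{j_k}>0$, so the paper's application of Lemma 6.5 to its extracted subsequence is unjustified at the identical point; and in fact any subsequence with $\prod_k\beta_{j_k}=0$ has $\sum_k(2\alpha_{j_k}\beta_{j_k})^2=\infty$, whence the dissociated product $\prod_k\bigl(1+2\alpha_{j_k}\beta_{j_k}\cos(m_{j_k}N_k\theta)\bigr)$ is singular to $dz$ by the classical dichotomy (Theorem 1.2; Brown--Moran), so no admissible choice can have $\frac{d\mu}{dz}>0$ a.e.\ for this input. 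Your argument (and the paper's) is rescued under an extra hypothesis such as $\liminf_j|a_{m_j,j}|<1$: then pick $j_k$ among the indices with $|a_{m_j,j}|\le 1-\delta$ and $\varepsilon_{j_k}\le 2^{-k}$, and everything downstream in your proposal goes through. As stated, however, the selection step you flagged is not merely ``the decisive content'' --- it is a hole that the stated hypotheses cannot fill.
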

\begin{proof}
 Since $\mid P_j(z)\mid \rightarrow 1$ as $j \rightarrow \infty$ a.e. $(dz)$, by Egorov's theorem we can extract a subsequence $P_{j_k}, k = 1,2,\cdots$ such that
the sets $$E_k \egdef \Big\{z: \mid (1- \mid P_{j_l}(z)\mid )\mid < \frac{1}{2^k} ~~\forall ~~l \geq k \Bigg\} $$
increase to $S^1$ (except for a $dz$ null set), and $\sum_{k=1}^\infty (1 -dz(E_k)) < \infty.$
 Write $Q_k = P_{j_k}$. Then for $z \in E_k$, $\mid (1 - \mid Q_k(z)\mid)\mid < \frac{1}{2^k}$.  By the lemma above we can choose $N_1, N_2, \cdots$ such that
$$\prod_{k=1}^\infty \mid Q_k(z^{N_k}) \mid^2$$ is a generalized Riesz product of dynamical origin. We show that $\displaystyle \lim_{L\rightarrow \infty}\prod_{k=1}^L|Q_k(z^{N_k})|$
is nonzero a.e. $(dz)$, which will imply, by proposition 5.2., that $\frac{d\mu}{dz} > 0$ a.e $(dz)$.\\

 Now the maps $S_k: z \rightarrow z^k, k=1,2,\cdots$ preserve the measure $(dz)$, and since $\sum_{k=1}^\infty dz(S^1 -E_k) < \infty$ we have
 $\sum_{k=1}^\infty dz(S^{-N_k}(S^1 - E_k)) < \infty$. Let $F_k = S^{-N_k}(S^1 - E_k)$ and
 $F = \limsup_{k\rightarrow \infty} F_k = \cap_{k=1}^\infty\cup_{l=k}^\infty F_l$.
 Then $dz(F) =0$, and if $z\notin F$, $z \notin S^{-N_k}(S^1 - E_k)$ hold for all but
 finitely many $k$, which in turn implies that $S^{N_k}z \in E_k$ for all but finitely many $k$.
 Thus, if $z \notin F$, then $\mid (1 - \mid Q_k(z^{N_k})\mid)\mid < \frac{1}{2^k}$ for all but finitely many $k$. Also the set of points $z$ for which some finite product $\prod_{k=1}^L\mid Q_k(z^{N_k})\mid$ vanishes is countable. Clearly
$\lim_{L\rightarrow \infty}\prod_{k=1}^L\mid Q_k(z^{N_k})\mid$
is nonzero a.e. $(dz)$ and the theorem is proved.\\
\end{proof}

\begin{Cor}\label{cor1}
 (i) If $P_k, k=1,2,\cdots$  are as in the above theorem and if\linebreak $\limsup_{k\rightarrow \infty} M(P_k) =1$, then we can choose $P_{j_k}, k=1,2,\cdots$ and $N_1, N_2, \cdots, $ in such a way that $M(\mu) $ is positive.\\
 (ii) If $P_k, k =1,2,\cdots$ are as in the above theorem  and  if $\liminf_{k\rightarrow \infty}M(P_k) < 1$  then we can choose $P_{j_k}, k =1,2,\cdots$ and $N_1, N_2, \cdots $ in such a way that $M(\mu) =0$, and $\frac{d\mu}{dz} > 0$ a.e. $(dz)$.

\end{Cor}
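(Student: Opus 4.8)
The plan is to deduce both parts directly from Theorem 6.6 and Proposition 5.2, using the multiplicativity of the Mahler measure across dissociated factors. Recall that for a single polynomial $P$ the Mahler measure $M(P)=\exp\bigl(\int_{S^1}\log|P|\,dz\bigr)$ equals the squared modulus of the constant term of its outer part, and that by the computation in Section 3 the Mahler measure of the generalized Riesz product $\mu=\prod_k|P_{j_k}(z^{N_k})|^2$ is $\prod_k M(P_{j_k})$ (substitution $z\mapsto z^{N_k}$ leaves the Mahler measure unchanged, as noted in Remark 3.4(iv)). So both parts reduce to controlling this infinite product of scalars $M(P_{j_k})\in(0,1]$ while simultaneously running the construction of Theorem 6.6 to guarantee $\frac{d\mu}{dz}>0$ a.e.

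For part (i), since $\limsup_k M(P_k)=1$, I would first pass to a subsequence along which $M(P_k)\to 1$ fast, say so that $\sum_k\bigl(1-M(P_k)\bigr)<\infty$, which forces $\prod_k M(P_k)>0$. I would then interleave this with the Egorov selection of Theorem 6.6: since $|P_k|\to 1$ a.e., the hypothesis of Theorem 6.6 already supplies a subsequence $P_{j_k}$ with $\frac{d\mu}{dz}>0$ a.e., and I would refine the index choice so that the selected subsequence satisfies both the summability of $1-M(P_{j_k})$ and the summability of $1-dz(E_k)$ from the proof of Theorem 6.6. With $M(\mu)=\prod_k M(P_{j_k})>0$ and $\frac{d\mu}{dz}>0$ a.e., part (i) follows. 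The point to check is that the two summability requirements can be met along a common subsequence, which is routine since each is achievable by going far enough out.

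For part (ii), the assumption $\liminf_k M(P_k)<1$ gives a subsequence along which $M(P_k)\le\rho<1$ for some fixed $\rho$, whence $\prod_k M(P_{j_k})\le\prod_k\rho=0$, forcing $M(\mu)=0$. Again I would interleave this selection with the Egorov construction of Theorem 6.6 so that the same subsequence $P_{j_k}$ yields $\frac{d\mu}{dz}>0$ a.e. (via Proposition 5.2, since $\lim_L\prod_{k=1}^L|Q_k(z^{N_k})|$ is shown there to be nonzero a.e.), producing a measure that is absolutely continuous with positive density almost everywhere yet has Mahler measure zero.

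The main obstacle is the simultaneous satisfaction of the two selection criteria in each part, namely controlling $\prod_k M(P_{j_k})$ (bounding it away from zero in (i), forcing it to zero in (ii)) while retaining enough mass in the Egorov sets $E_k$ to guarantee $\frac{d\mu}{dz}>0$ a.e. Since both the Mahler-measure condition and the Egorov condition are of the form ``choose the next index large enough,'' they do not conflict, and a single diagonal/interleaving argument extracts a common subsequence; verifying that the Mahler measure of the product equals $\prod_k M(P_{j_k})$ (invariance under $z\mapsto z^{N_k}$ plus multiplicativity over dissociated factors from Section 3) is the one identity I would state carefully before invoking it.
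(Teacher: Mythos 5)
Your proposal is correct and follows exactly the route the paper intends (the corollary is stated without proof as an immediate consequence of Theorem \ref{th7} of section 6 together with the Mahler-measure computation of section 3): one uses $M(\mu)=\prod_k M(P_{j_k})^2$ --- note the square, since the Riesz factors are $|P_{j_k}(z^{N_k})|^2$, though this is immaterial for positivity or vanishing --- via Theorem \ref{th5} and the invariance of $M$ under $z\mapsto z^{N}$ (Remark 3.4(iv)), and then selects the subsequence so that $\sum_k(1-M(P_{j_k}))<\infty$ in part (i), respectively $M(P_{j_k})\le\rho<1$ in part (ii), interleaved with the Egorov selection of Theorem \ref{th7}. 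Your one redundancy is harmless: in part (i) the Egorov step is not even needed, since $M(\mu)>0$ already forces $\frac{d\mu}{dz}>0$ a.e.\ $(dz)$.
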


\begin{rem}\label{rem2}

  Now it is easy to construct polynomials $P_k, k=1,2,\cdots$ satisfying the hypothesis of
part (ii) of the above corollary, so one can obtain generalized Riesz products $\mu$ with
zero Mahler measure and $\frac{d\mu}{dz}$ positive a.e $(dz)$. Thus the interchange of order of limits suggested in remark 5.5. is therefore not valid without some additional conditions.\\

Let (U) denote the class of all unimodular polynomials, i.e., polynomials of the type  $$\Biggl\{\sum_{j=0}^na_jz^j: \mid a_j\mid =1, 0 \leq j\leq n, n \geq 1\Biggr\}.$$ Note that $\| \frac{1}{\sqrt {n+1}} P\|_2 =1$ for any polynomial in the class (U) of degree $n$. A question of Littlewood, answered in the affirmative by J-P. Kahane \cite{Kahane}, \cite{QS}, asks if there is a sequence $P_j, j=1,2,\cdots $ of polynomials in the class $(U)$
 such that $\frac{1}{\sqrt {d_{j}+1}}| P_j|,$ $j =1,2,\cdots$  converges to the constant function 1 uniformly on $S^1$, where $d_j$ is the degree of the polynomial $P_j$, $j =1,2,\cdots$.
 Littlewood problem remains open if we require that the coefficients of $P_j$ be either -1 or 1, for all $j$. Let $P_j, j =1,2,\cdots $ be a sequence Kahane polynomials. Clearly then $\int_{S^1}\log |P_j(z)| dz \rightarrow 0$ as $j \rightarrow \infty$, which in turn implies that the Mahler measure of $P_j$ converges to 1 as $j\rightarrow \infty$. Thus the sequence of Kahane polynomials satisfies the conditions of part (i) of above theorem and we see that
 Kahane polynomials give rise to  generalized Riesz products of Dynamical origin absolutely continuous with respect to the Lebesgue measure and with positive Mahler measure.\\

\end{rem}
 A sequence of polynomials $P_k, k=1,2,\cdots$  in the class $U$ is said to be ultraflat if  $\frac{\mid P_k\mid }{\mid\mid P_k\mid\mid_2}, k =1,2,\cdots$ converge uniformly to the constant function 1.

 As mentioned above it is not known if there is a sequence of ultra flat polynomial with coefficients $+1$ and $-1$. However, M. Guenais\cite{Guenais} has shown that there is a sequence $P_k, k=1,2,\cdots$ of polynomials in $U$ with coefficients in $\{-1,1\}$ such that $\frac{\mid P_k\mid}{\mid\mid P_k\mid\mid_2}\rightarrow 1$ a.e.($dz$) if and only if there is a measure preserving general odometer action $T$  and $\phi$ taking values in $\{-1,1\}$ such that $V_\phi$ has Lebesgue spectrum. Here $\phi$ has to be of the special kind described above, namely, it is constant on all but the top level of the stacks associated to $T$.\\

A finite sequence $(e_0, e_2, \cdots, e_{n-1})$ of $+1$ and $-1$ is said to be a Barker
sequence if for all $k, 0 < k \leq  n-1 $  the aperiodic correlation
$$ \sum_{j=0}^{n-k}e_je_{j+k}$$
does not exceed 1 in absolute value.\\

It is known that there are only finitely many Barker sequences of odd length, and there are
no Barker sequences of odd length greater than 13. For more information on Barker sequences
and their significance in Radar signal processing theory we refer the reader to \cite{down}, \cite{bor}, \cite{bor1}. It is not known if there are infinitely many Barker sequences, and it is conjectured that there are only finitely many Barker sequences. P. Borwein and M. Mossinghoff\cite{bor}  have shown that if $e_0, e_1, \cdots, e_{n-1}$ is a Barker sequence of length $n$ and if
$$P(z) = \frac{\sum_{j=0}^{n-1}e_jz^j}{\sqrt n},$$ then
$M(P) > 1-\frac{1}{n}$. This immediately implies, in the light of the result of M. Guenais, or by the corollary above the following theorem.
\begin{Th}\label{th7}
 If there are infinitely many Barker sequences then there is a generalized
Riesz product $ \prod_{j=1}^\infty \mid P_j\mid^2$  of dynamical origin with measure preserving $T$, with positive Mahler measure, and such that coefficients of each $P_j$ are real and equal in absolute value. The measure preserving $T$ can be chosen to be an odometer action.
\end{Th}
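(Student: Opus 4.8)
The plan is to manufacture the required product directly out of normalized Barker polynomials, reading off the dynamical data from the realization of Section~6 (cutting probabilities $p_{i,j}=|b_{i,j}|^2$, spacers $a_{i-1,j}=r_{i,j}-r_{i-1,j}-h_{j-1}$) and computing its Mahler measure via Theorem~\ref{th5}. First I would record the arithmetic input. Since for each fixed length there are only finitely many Barker sequences, the hypothesis of infinitely many of them forces Barker sequences of arbitrarily large length. Fix therefore a strictly increasing sequence of admissible lengths $m_1<m_2<\cdots$, carrying Barker sequences $(e_0^{(k)},\dots,e_{m_k-1}^{(k)})$, chosen to grow fast enough that $\sum_k \frac1{m_k}<\infty$ (for instance $m_k\ge 2^k$); replacing a sequence by its negation, which is again Barker, I may assume $e_0^{(k)}=1$. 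Put
$$P_k(z)=\frac{1}{\sqrt{m_k}}\sum_{i=0}^{m_k-1}e_i^{(k)}z^i.$$
Each $P_k$ is analytic, has $L^2(S^1,dz)$ norm one, positive constant term $\frac1{\sqrt{m_k}}$, all coefficients real and equal to $\frac1{\sqrt{m_k}}$ in absolute value, and leading coefficient tending to $0$, so that $\prod_k|a_{m_k,k}|=0$; thus the $P_k$ satisfy the hypotheses of the dilation lemma. The Borwein--Mossinghoff estimate quoted above gives $M(P_k)>1-\frac1{m_k}$, so $M(P_k)\to 1$ and $\sum_k(1-M(P_k))<\infty$.

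Next I would pass from $M(P_k)\to 1$ to flatness in order to invoke the corollary above. Because $\|P_k\|_2=1$, Jensen's inequality gives the nonnegative defect $\int_{S^1}\bigl(|P_k|^2-1-\log|P_k|^2\bigr)\,dz=-2\log M(P_k)\to 0$, and since the integrand vanishes only where $|P_k|=1$ this forces $|P_k|^2\to 1$ in $dz$-measure, hence $|P_k|\to 1$ a.e. along a subsequence. The $P_k$ now meet all hypotheses of the flat-polynomial theorem, and with $\limsup_k M(P_k)=1$ its corollary (part~(i)) applies, producing a subsequence and dilation exponents for which the resulting product is a generalized Riesz product of dynamical origin with positive Mahler measure. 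The positivity is exactly the convergence of $\prod_k M(P_{j_k})^2\ge\prod_k\bigl(1-\frac1{m_{j_k}}\bigr)^2>0$, using that the substitution $z\mapsto z^{N}$ leaves Mahler measures unchanged together with Theorem~\ref{th5}. The dilated polynomials carry the coefficients of the Barker polynomials, still real and of common modulus.

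Finally I would upgrade $T$ to a measure-preserving odometer, which is the only step needing care. In the dynamical realization the cutting probabilities are $p_{i,j}=|b_{i,j}|^2$; since every coefficient of $P_{j_k}$ has modulus $\frac1{\sqrt{m_{j_k}}}$, all cutting probabilities at stage $k$ equal $\frac1{m_{j_k}}$, so the cutting is uniform and the criterion of Section~6 makes $T$ measure preserving. To eliminate spacers I would not merely take the dilations large enough for dissociation, but set $h_0=1$, $h_k=m_{j_1}\cdots m_{j_k}$ and dilate $P_{j_k}$ by $N_k=h_{k-1}$. As the Barker polynomial has the consecutive exponents $0,1,\dots,m_{j_k}-1$, the polynomial $P_{j_k}(z^{h_{k-1}})$ has exponents $0,h_{k-1},\dots,(m_{j_k}-1)h_{k-1}$, i.e. precisely the return times $R_{i,k}=i\,h_{k-1}$ of a no-spacer stacking; the mixed-radix numeration $\sum_k i_k h_{k-1}$ then makes the dilated polynomials automatically dissociated, renders the product of dynamical origin, and forces every spacer count $a_{i,k}=R_{i+1,k}-R_{i,k}-h_{k-1}$ to vanish. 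Hence $T$ adds no spacers and cuts into equal pieces, i.e. is a measure-preserving odometer, while the signs $e_i^{(k)}\in\{-1,1\}$ are supplied by a suitable $\phi$.

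The routine parts, namely the Barker bookkeeping and the product formula for $M(\mu)$, are immediate from the quoted results; the single delicate point is reconciling the three simultaneous demands on the dilation exponents, namely dissociation of the $|P_{j_k}(z^{N_k})|^2$, positivity of $M(\mu)$, and absence of spacers. This is why I would commit to the explicit choice $N_k=h_{k-1}$, for which the odometer numeration delivers dissociation for free, rather than to the weaker largeness condition of the dilation lemma, which would generically reintroduce spacers and spoil the odometer conclusion.
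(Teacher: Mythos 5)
Your proposal is correct, and it follows the same skeleton the paper intends (Borwein--Mossinghoff's bound $M(P)>1-\frac1n$ fed into the flat-polynomial corollary, with the odometer delegated to the Guenais-style realization), but it genuinely diverges in the two places where the paper's one-line proof is silent, and both divergences are improvements. First, the paper's corollary requires $|P_{j}|\to 1$ a.e., and the paper treats the passage from $M(P_k)\to 1$ to flatness as immediate; your Jensen-defect computation $\int_{S^1}\bigl(|P_k|^2-1-\log|P_k|^2\bigr)\,dz=-2\log M(P_k)\to 0$, giving convergence in measure and a.e. convergence along a subsequence, supplies exactly the missing step (in fact, for the conclusions actually claimed --- dynamical origin, measure-preserving odometer, $M(\mu)>0$ --- this step is dispensable, since you compute $M(\mu)=\prod_k M(P_{j_k})^2>0$ directly from Theorem \ref{th5} and the invariance of Mahler measure under $z\mapsto z^{N}$). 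Second, and more substantively: the paper's dilation lemma by itself cannot deliver the odometer, because its fast dilations $N_k\geq 2H_{k-1}$ force strictly positive spacer counts; your choice $N_k=h_{k-1}$ with $h_k=m_{j_k}h_{k-1}$, which makes all return times $R_{i,k}=ih_{k-1}$ and all spacers vanish while the uniform cuts $p_{i,k}=\frac1{m_{j_k}}$ make $T$ measure preserving, is precisely the construction the paper outsources to Guenais, here made explicit. One small caution: with $N_k=h_{k-1}$ the \emph{squared moduli} $|P_{j_k}(z^{N_k})|^2$ are in general \emph{not} dissociated in the sense of Definition \ref{def2} (already with $m_{j_1}=2$, $h_1=2$, one has the coincidence $1\cdot h_0+0\cdot h_1=(-1)\cdot h_0+1\cdot h_1$ among the signed mixed-radix exponents); only the analytic products have distinct exponents, by uniqueness of the unsigned mixed-radix representation. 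So the well-definedness of the generalized Riesz product (conditions (i) and (ii) of Definition \ref{def1}) cannot be drawn from the dissociation mechanism of the dilation lemma; it must come, as you in effect arrange, from the dynamical-origin proposition of Section 6, whose return-time inequalities your exponents satisfy with equality (and which subproducts over subsequences inherit). Your phrase ``automatically dissociated'' should therefore be restricted to the analytic polynomials; with that wording repaired, the argument is complete and, unlike the paper's citation-only proof, self-contained.
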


One can ask the question if there is a sequence $P_k, k=1,2,\cdots$ of polynomials with coefficients in $\{-1, 0, 1\}$ such that
 $\frac{\mid P_k\mid}{\mid \mid P_k\mid\mid_2}\rightarrow 1~~\rm{a.e.}~(dz) ~~{\rm{as}} ~~k\rightarrow \infty.$ Theorem 6.5. at once implies that this is possible if and only if  there is a generalized Riesz  product $\mu = \prod_{j=1}^\infty\mid Q_j\mid^2$, $\frac{d\mu}{dz} > 0$ a.e. $(dz)$,  of dynamical origin and such that for each $j$ the non-zero coefficients of $Q_j$ are real and equal in absolute value.\\

Consider the class of (B) of all polynomials of the type
 $$P(z) = \frac{1}{\sqrt{m+1}}(1 + z^{n_1} + z^{n_2} + \cdots +z^{n_m}),$$ where $0 <n_1 < n_2 < \cdots < n_m$.
 Since $L^2(S^1,dz)$ norm of such a $P $  is one, its $L^1(S^1,dz)$-norm is at most one. J. Bourgain\cite{bourgain} has raised the question if the supremum of the $L^1(S^1,dz)$-norms of elements in  $(B)$ can be 1, see \cite{bourgain}. We have the following result due to M. Guenais, proved here more generally than in \cite{Guenais}.

\begin{Prop}\label{prop1}
 Let $\mu = \prod_{k=1}^\infty \mid P(z)\mid^2$ be a generalized
Riesz product. If  $\sum_{k=1}^\infty ( 1 - \mid\mid P_n\mid\mid_1^2)^{1/2}$ is finite then $\frac{d\mu}{dz} > 0$ on a set of positive Lebesgue measure in $S^1$.
\end{Prop}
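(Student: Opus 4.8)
The plan is to show that the partial products $|S_n(z)| = \prod_{k=1}^{n}|P_k(z)|$ converge almost everywhere to a strictly positive function $g$, and then to transfer this pointwise positivity to $\mu$ via Fatou's lemma together with the defining weak convergence $|S_n|^2\,dz \to \mu$, obtaining $\mu \geq g^2\,dz$ and hence $\frac{d\mu}{dz} \geq g^2 > 0$ a.e.

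First I would exploit that $\|P_k\|_2 = 1$ (part of Definition \ref{def1}) to linearize the hypothesis. Since $\int_{S^1}1\,dz = 1$,
$$\bigl\| |P_k| - 1\bigr\|_2^2 = \int_{S^1}|P_k|^2\,dz - 2\int_{S^1}|P_k|\,dz + 1 = 2\bigl(1 - \|P_k\|_1\bigr).$$
Because $\|P_k\|_1 \leq \|P_k\|_2 = 1$ by Cauchy--Schwarz, we have $1 - \|P_k\|_1 \leq 1 - \|P_k\|_1^2$, so the assumption yields
$$\sum_{k=1}^{\infty}\bigl\| |P_k| - 1\bigr\|_2 = \sqrt 2 \sum_{k=1}^{\infty}\bigl(1 - \|P_k\|_1\bigr)^{1/2} \leq \sqrt 2 \sum_{k=1}^{\infty}\bigl(1 - \|P_k\|_1^2\bigr)^{1/2} < \infty.$$

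Next I would upgrade this to pointwise summability. By Minkowski's inequality $\bigl\|\sum_{k}\bigl||P_k| - 1\bigr|\bigr\|_2 \leq \sum_{k}\bigl\||P_k| - 1\bigr\|_2 < \infty$, so the nonnegative series $\sum_k \bigl||P_k(z)| - 1\bigr|$ lies in $L^2(S^1,dz)$ and is therefore finite for a.e.\ $z$. Fix such a $z$ which, in addition, avoids the countable (hence $dz$-null) set on which some $P_k$ vanishes. Then $|P_k(z)| \to 1$, every factor is positive, and $\sum_k\bigl|\log|P_k(z)|\bigr|$ converges (using $|\log(1+x)| \le 2|x|$ for small $x$); hence $g(z) \egdef \prod_{k=1}^{\infty}|P_k(z)| = \lim_n |S_n(z)|$ exists and is strictly positive. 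In particular $|S_n(z)|^2 \to g(z)^2 > 0$ for a.e.\ $z$.

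Finally I would pass from $|S_n|$ to $\mu$. For every continuous $h \geq 0$, Fatou's lemma applied to $h|S_n|^2 \geq 0$ combined with the weak convergence $|S_n|^2\,dz \to \mu$ gives
$$\int_{S^1} h\,g^2\,dz \leq \liminf_{n\to\infty}\int_{S^1} h\,|S_n|^2\,dz = \int_{S^1} h\,d\mu.$$
As this holds for all continuous $h \geq 0$, we obtain $g^2\,dz \leq \mu$ as measures, whence $\frac{d\mu}{dz} \geq g^2 > 0$ a.e., which is stronger than the asserted positivity on a set of positive measure. The one delicate point is precisely this last display: it requires the a.e.\ convergence of $|S_n|^2$ (for Fatou) simultaneously with the weak convergence of the measures $|S_n|^2\,dz$ to $\mu$ (the definition of the generalized Riesz product), so that the liminf of the integrals is identified with $\int h\,d\mu$. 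Everything else is the summability bookkeeping of the second and third paragraphs.
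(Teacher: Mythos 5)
Your proof is correct, and it takes a genuinely different route from the paper's. The paper stays at the level of $L^1$ norms: writing $v_k^2 = 1-\|P_k\|_1^2$, it telescopes the Cauchy--Schwarz estimate
\[
\Bigl|\ \|fg\|_1-\|f\|_1\|g\|_1\ \Bigr| \le \bigl(\|f\|_2^2-\|f\|_1^2\bigr)^{1/2}\bigl(\|g\|_2^2-\|g\|_1^2\bigr)^{1/2}
\]
across the partial products to get $\bigl|\ \|\prod_{j=n_0}^{k}P_j\|_1-\prod_{j=n_0}^{k}\|P_j\|_1\ \bigr|\le\sum_{j=n_0}^{k}v_j$, deduces $\limsup_{k}\|\prod_{j=1}^{k}P_j\|_1>0$ from $\sum_k v_k<\infty$ (equivalently $\prod_k\|P_k\|_1>0$), and then invokes Bourgain's singularity criterion, i.e.\ Corollary \ref{cor3} with $Q_j\equiv 1$, to conclude that $\mu$ is not singular to Lebesgue measure. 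You argue pointwise instead: the identity $\bigl\|\,|P_k|-1\,\bigr\|_2^2=2\bigl(1-\|P_k\|_1\bigr)$ converts the hypothesis into $\sum_k\bigl\|\,|P_k|-1\,\bigr\|_2<\infty$, whence $\sum_k\bigl|\,|P_k(z)|-1\,\bigr|<\infty$ a.e., the partial products converge a.e.\ to a strictly positive $g$, and Fatou combined with the defining weak convergence gives $g^2\,dz\le\mu$. All your steps check out (the $L^2$-Minkowski passage via monotone convergence, the countable union of the zero sets of the $P_k$, and the regularity argument turning the inequality against continuous $h\ge 0$ into an inequality of measures). Your route buys two things: it delivers the strictly stronger conclusion $\frac{d\mu}{dz}\ge g^2>0$ a.e.\ $(dz)$, not merely positivity on a set of positive measure; and it is self-contained, whereas Corollary \ref{cor3} formally carries the hypotheses of Theorem \ref{th6} (in particular that the tail products $\mu_n$ converge weakly to Lebesgue measure), which the present proposition never assumes --- so you quietly bypass a looseness in the paper's own citation. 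What the paper's method buys in return is coherence with its affinity machinery: the telescoped $L^1$ estimate produces quantitative lower bounds on the $L^1$ norms of finite subproducts, exactly the kind of information that feeds into Theorem \ref{th9} and the flatness discussion, which your pointwise argument does not directly provide.
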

\begin{proof}
Write $v_k^2 =  1 - \mid\mid P_k\mid\mid_1^2$. Then
$\sum_{k=1}^\infty v_k < \infty$, equivalently\linebreak $\prod_{k=1}^\infty\mid\mid P_k\mid\mid_1 >0$. For all functions $f,g \in L^2(S^1, dz)$, Cauchy-Schwarz inequality gives

$$\mid(\mid\mid f\cdot g\mid\mid_1   - \mid\mid f\mid\mid_1 \mid\mid g\mid\mid_1)\mid \leq (\mid\mid f\mid\mid_2^2 -\mid\mid f\mid\mid_1^2)^{1/2} (\mid\mid g\mid\mid_2^2 -\mid\mid g\mid\mid_1^2)^{1/2}.$$

Fix an integer $n_0 > 1$ and let $k > n_0$. Then

$$\mid( \mid \mid\prod_{j=n_0}^k P_j\mid\mid_1 - \mid\mid\prod_{j=n_0}^{k-1}P_j\mid\mid_1\mid\mid P_k\mid\mid_1)\mid$$
$$\leq (\mid\mid \prod_{j=n_0}^{k-1}P_j\mid\mid_2^2 - \mid\mid \prod_{j=n_0}^{k-1} P_j\mid\mid_1^2)^{1/2} ( \mid\mid P_k\mid\mid_2^2 - \mid\mid P_k\mid\mid_1^2)^{1/2}$$
$$\leq v_k.$$
 So,
 $$ \mid(\mid \mid\prod_{j=n_0}^k P_j\mid\mid_1 - \mid\mid\prod_{j=n_0}^{k-1}P_j\mid\mid_1\mid\mid P_k\mid\mid_1)\mid \leq v_k.$$

$$\mid(\mid\mid \prod_{j=n_0}^{k-1} P_j\mid\mid_1\mid\mid P_k\mid\mid_1 - (\mid\mid\prod_{j=n_0}^{k-2}P_j\mid\mid_1)(\mid\mid P_{k-1}\mid\mid_1)\mid\mid P_k\mid\mid_1)\mid \leq v_{k-1}\mid\mid P_k\mid\mid_1 \leq v_{k-1}$$
                       $${  \vdots   ~~~~ \vdots    ~~~~  \vdots}$$
$$\mid(\mid\mid \prod_{j=n_0}^{n_0 +1}P_j\mid\mid_1\prod_{j=n_0+2}^k\mid\mid P_j\mid\mid_1- \prod_{j=n_0}^k\mid\mid P_j\mid\mid_1)\mid \leq v_{n_0+1}.$$

On adding the above inequalities:
$$\mid( \mid\mid\prod_{j=n_0}^kP_i\mid\mid_1 - \prod_{j=n_0}^k\mid\mid P_j\mid\mid_1)\mid \leq \sum_{j=n_0}^kv_j$$

Since $\prod_{j=1}^\infty \|P_j\|
_1 > 0$ and $\sum_{j=1}^\infty v_k < \infty$,
we see that\linebreak $\displaystyle \limsup_{k\rightarrow \infty}\mid\mid \prod_{j=1}^k P_j\mid\mid_1  > 0$,
so by Bourgain's criterion for singularity (i.e., corollary 4.4 with $Q_j =1$ for all $j$,)  we see that $\mu$ is not singular to Lebesgue measure
on $S^1$.
\end{proof}

The only known rank one non-singular $T$ for which $U_T$ has Lebesgue spectrum is the
 one where the cutting parameter satisfies $\prod_{j=1}^\infty p_{l_j,j} > 0$. One can ask  if there exists a non-dissipative  rank one transformation whose maximal spectral admits a component equivalent to the Lebesgue measure on $S^1$. We discuss this question in the light of the above considerations. Fix $0 < \lambda < 1$ and let

 $$A_{\lambda}= \Bigg\{\sum_{j=0}^na_jz^j: \forall j, 0 \leq a_j \leq \lambda,
 \sum_{j=1}^n\mid a_j\mid^2 =1, n =1,2,\cdots\Bigg\},$$

 $$\alpha_\lambda = \sup_{P\in A_\lambda}\mid\mid P\mid\mid_1$$
\begin{Prop}\label{prop1}
  If $\alpha_\lambda =1$ for some $\lambda, 0 < \lambda < 1$, then there is a non-dissipative non-singular rank one map $T$ such $U_T$ has absolutely continuous part (w.r.t $(dz)$) in its spectrum.
\end{Prop}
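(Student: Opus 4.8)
The plan is to manufacture, out of a near-extremal sequence for $\alpha_\lambda$, a generalized Riesz product of purely dynamical origin whose absolutely continuous part is non-zero, and then to read off the required transformation directly from the dynamical-origin machinery. Since $\alpha_\lambda = 1$ while $\|P\|_1 \le \|P\|_2 = 1$ for every $P \in A_\lambda$ (Cauchy--Schwarz on the probability space $(S^1, dz)$), there is a sequence $P_k \in A_\lambda$ with $\|P_k\|_1 \to 1$. Passing to a subsequence I may assume $1 - \|P_k\|_1^2 \le 4^{-k}$, so that $\sum_k (1 - \|P_k\|_1^2)^{1/2} < \infty$. Each $P_k$ has coefficients in $[0,\lambda]$ and $L^2$-norm one; factoring out the lowest non-vanishing power of $z$ (which changes neither $|P_k|$ on $S^1$ nor either norm) I may also assume that each $P_k$ has a strictly positive constant term. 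Finally, because every coefficient is at most $\lambda < 1$, the leading coefficient satisfies $\prod_k |a_{m_k,k}| \le \prod_k \lambda = 0$, so the hypothesis $\prod_k |a_{m_k,k}| = 0$ needed below holds automatically.

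First I would invoke the flat-polynomial substitution lemma (the last lemma above): there exist positive integers $N_1, N_2, \ldots$ such that, writing $Q_k(z) = P_k(z^{N_k})$, the product $\mu = \prod_{k=1}^\infty |Q_k|^2$ is a generalized Riesz product of dynamical origin. Since $z \mapsto z^{N_k}$ preserves $dz$, one has $\|Q_k\|_1 = \|P_k\|_1$ and $\|Q_k\|_2 = 1$, hence $\sum_k (1 - \|Q_k\|_1^2)^{1/2} < \infty$. By the preceding Guenais-type criterion (the proposition with hypothesis $\sum_k (1 - \|P_k\|_1^2)^{1/2} < \infty$), $\frac{d\mu}{dz} > 0$ on a set of positive Lebesgue measure; equivalently the absolutely continuous part $\mu_a$ is non-zero. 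Thus $\mu$ is a generalized Riesz product of dynamical origin possessing a genuine absolutely continuous component.

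Next I would identify the operator and verify the two structural requirements. The non-zero coefficients $b_{i,k}$ of each $Q_k$ are strictly positive, so $\mu$ is of \emph{purely} dynamical origin; in the construction attached to a dynamical-origin product the unimodular constants $b_{i,k}/|b_{i,k}|$ all equal $1$, so the accompanying function $\phi$ may be taken identically $1$, whence $V_\phi = U_T$ for the rank-one $T$ produced by the dynamical-origin characterization. Consequently $\mu$ is, up to a possible discrete part, the maximal spectral type of $U_T$, and since $\mu_a \neq 0$ the operator $U_T$ has an absolutely continuous component in its spectrum. The cutting parameters are $p_{i,k} = |b_{i,k}|^2 \le \lambda^2 < 1$, so $\prod_k p_{0,k} \le \prod_k \lambda^2 = 0$ (ensuring $U_T$ is genuinely unitary, not the shift isometry) and $\prod_k p_{l_k,k} \le \prod_k \lambda^2 = 0$, where $p_{l_k,k} = \max_i p_{i,k}$. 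By the dissipativity dichotomy recorded in the opening remark of this section, the vanishing of $\prod_k p_{l_k,k}$ forces $T$ to live on an atom-free space and to be conservative, i.e.\ non-dissipative. This produces the desired $T$.

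The main obstacle is the bookkeeping that binds the abstract extremal hypothesis to the dynamical picture: one must \emph{simultaneously} arrange positive constant terms (to apply the substitution lemma), positivity of \emph{all} coefficients (so that $\phi \equiv 1$ and the relevant operator is $U_T$ rather than a twisted $V_\phi$), and the two product conditions $\prod_k p_{0,k} = 0$ and $\prod_k p_{l_k,k} = 0$ (for unitarity and non-dissipativity). All of these are secured by the single structural feature that the coefficients lie in $[0,\lambda]$ with $\lambda < 1$; the only delicate point is that these constraints do not conflict with the near-flatness keeping $\mu_a \neq 0$, and this works precisely because $z \mapsto z^{N_k}$ preserves both the $L^1$- and $L^2$-norms while the lemma simultaneously arranges dissociation.
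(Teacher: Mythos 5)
Your proof is correct and follows essentially the same route as the paper, whose entire argument is the one-line citation of the substitution lemma (the flat-polynomial Lemma of Section 6) together with the Guenais-type criterion ($\sum_k (1-\|P_k\|_1^2)^{1/2} < \infty$ implies $\frac{d\mu}{dz}>0$ on a set of positive measure). The details you supply --- extracting a subsequence with $1-\|P_k\|_1^2 \le 4^{-k}$, identifying $V_\phi = U_T$ via positivity of the coefficients (purely dynamical origin, $\phi \equiv 1$), and deducing non-dissipativity and unitarity from $\prod_k p_{l_k,k} \le \prod_k \lambda^2 = 0$ --- are precisely the bookkeeping the paper leaves implicit.
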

\begin{proof}

 This follows from lemma 6.4 and proposition 6.10.\\
\end{proof}

\begin{Prop}\label{prop1}
 If there is a sequence $P_n, n=1,2,\cdots$, of polynomials in $A_\lambda$  such that $\displaystyle\lim_{n\rightarrow \infty}\mid P_n(z)\mid = 1$ a.e.$(dz)$, then there is a non-dissipative non-singular rank one $T$ such that the spectrum of $U_T$ has a part equivalent to the Lebesgue measure on $S^1$.
\end{Prop}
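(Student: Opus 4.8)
The plan is to run the sequence $P_n$ through the flat-polynomials construction established above, and then to read off both the non-dissipativity of $T$ and the Lebesgue component of its spectrum from the uniform bound $\lambda$ on the coefficients.

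First I would check that the $P_n$ meet the hypotheses of the substitution lemma above. Each $P_n\in A_\lambda$ has $L^2(S^1,dz)$-norm one, and every coefficient, in particular the leading coefficient $a_{m_n,n}$, is at most $\lambda<1$; hence $\prod_{n}|a_{m_n,n}|\le\prod_n\lambda=0$. The one condition requiring care is that of a non-zero constant term: should the constant term of some $P_n$ vanish, I would factor out the lowest occurring power of $z$ and replace $P_n$ by $\widetilde P_n=z^{-k}P_n$. Since $|\widetilde P_n|=|P_n|$ on $S^1$, this alters neither the hypothesis $|P_n|\to 1$ a.e.\ nor the generalized Riesz product to be formed, while $\widetilde P_n$ now has a positive constant term and coefficients still lying in $[0,\lambda]$. (No $P_n$ is a monomial, since a single coefficient of modulus $1$ would exceed $\lambda$; thus $\widetilde P_n$ is a genuine polynomial.) After this harmless reduction the $P_n$ satisfy the conditions of the lemma, and together with $|P_n|\to1$ a.e.\ they satisfy the hypotheses of the flat-polynomials theorem proved above.

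I would then apply that theorem to obtain a subsequence $P_{j_k}$ and integers $N_1<N_2<\cdots$ for which $\mu=\prod_{k=1}^\infty|P_{j_k}(z^{N_k})|^2$ is a generalized Riesz product of dynamical origin with $\frac{d\mu}{dz}>0$ a.e. Because the substitution $z\mapsto z^{N_k}$ only rescales exponents, the coefficients of $P_{j_k}(z^{N_k})$ coincide with those of $P_{j_k}$ and are therefore non-negative; hence $\mu$ is in fact of \emph{purely} dynamical origin. In the reconstruction of a transformation from a product of purely dynamical origin the phase constants $b_{i,j}/|b_{i,j}|$ are all equal to one, so the associated cocycle may be taken trivial, $\phi\equiv1$, giving $V_\phi=U_T$. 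Thus $\mu$ is the maximal spectral type (up to a possible discrete part) of $U_T$ for the resulting non-singular rank one transformation $T$.

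It remains to see that $T$ is non-dissipative and that $\mu$ carries a Lebesgue component. The cutting parameters of $T$ are $p_{i,j}=|b_{i,j}|^2$, where the $b_{i,j}$ are the non-zero coefficients of $P_{j_k}(z^{N_k})$; since these are at most $\lambda$ in modulus, $p_{l_j,j}=\max_i|b_{i,j}|^2\le\lambda^2<1$ for every $j$, whence $\prod_j p_{l_j,j}\le\lim_{J}\lambda^{2J}=0$. By the dissipativity analysis in the remark above, $\prod_j p_{l_j,j}=0$ places $T$ on an atom-free measure space and renders it conservative, i.e.\ non-dissipative. Finally, $\frac{d\mu}{dz}>0$ a.e.\ means that the absolutely continuous part of $\mu$ is equivalent to $dz$, so the spectrum of $U_T$ contains a part equivalent to Lebesgue measure on $S^1$, as required. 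The main obstacle is exactly the demand that $T$ be non-dissipative: the only previously known Lebesgue-spectrum rank one maps come from the dissipative regime $\prod_j p_{l_j,j}>0$, and the entire force of the hypothesis $P_n\in A_\lambda$ with $\lambda<1$ is to drive $\prod_j p_{l_j,j}$ to zero and thereby land $T$ in the conservative regime, while the pointwise flatness $|P_n|\to1$ a.e.\ is what upgrades the conclusion from mere absolute continuity to full equivalence with $dz$.
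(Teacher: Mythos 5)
Your proof is correct and follows exactly the route of the paper, whose entire proof is the citation ``This follows from theorem 6.6'' (the flat-polynomials theorem built on the substitution lemma). You have merely made explicit the details the paper leaves implicit --- normalizing away a vanishing constant term, observing that the non-negative coefficients of $A_\lambda$ make the product purely dynamical so that $\phi\equiv 1$ and $V_\phi=U_T$, and using $p_{l_j,j}\leq\lambda^2<1$ to force $\prod_j p_{l_j,j}=0$ and hence non-dissipativity --- all of which are the intended reading of the paper's one-line argument.
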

\begin{proof}
 This follows from theorem 6.6.\\
\end{proof}

There is a partial converse to proposition 6.11. Let  $\mu = \prod_{j=1}^\infty \mid P_j\mid^2$ be a a generalized Riesz product of class (L), with each $P_j \in A_\lambda$, and $P_j's$ dissociated.
\begin{Prop}\label{ prop1}
If $\frac{d\mu}{dz} > 0$ a.e. $(dz)$, then $\alpha_\lambda =1$.
\end{Prop}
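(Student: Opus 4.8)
The plan is to prove $\alpha_\lambda \geq 1$; since $\|P\|_1 \leq \|P\|_2 = 1$ for every $P \in A_\lambda$ by Cauchy--Schwarz, this already forces $\alpha_\lambda = 1$. The mechanism is that the tail products of $\mu$ themselves furnish elements of $A_\lambda$ whose $L^1$ norms tend to $1$, even though no single initial product can do this (one typically has $\int_{S^1}\sqrt{d\mu/dz}\,dz < 1$).

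First I would note that, because the $P_j$ are dissociated with nonnegative coefficients bounded by $\lambda$ and with $\|P_j\|_2 = 1$, every finite product $T_{n,m} \egdef \prod_{j=n+1}^{m} P_j$ (for $m > n$) again lies in $A_\lambda$. Indeed, dissociation forces each coefficient of $T_{n,m}$ to be a single product $\prod_{j=n+1}^{m} a_{i_j}^{(j)}$ of one coefficient taken from each factor, hence a nonnegative real not exceeding $\lambda^{m-n} \leq \lambda$, while the defining property that every finite product of the $P_j$ has unit $L^2$ norm gives $\|T_{n,m}\|_2 = 1$. Consequently $\|T_{n,m}\|_1 \leq \alpha_\lambda$ for all $m > n$.

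Next I would let $m \to \infty$ with $n$ fixed. The tail $\mu_n = \prod_{j>n}|P_j|^2$ is itself of class (L) (being a subproduct of a class (L) product), so Proposition \ref{prop2} applies to it and yields $\prod_{j=n+1}^{m}|P_j| \to \sqrt{d\mu_n/dz}$ in $L^1(S^1,dz)$; in particular $\|T_{n,m}\|_1 \to \int_{S^1}\sqrt{d\mu_n/dz}\,dz$ as $m \to \infty$. Hence $\int_{S^1}\sqrt{d\mu_n/dz}\,dz \leq \alpha_\lambda$ for every $n$, and it remains to show the left-hand side tends to $1$ as $n \to \infty$.

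This last point is where I expect the real work to lie, and I would handle it through the multiplicativity of absolutely continuous densities. Applying Corollary \ref{cor4} to the partition $\{1,\dots,n\}\cup\{n+1,n+2,\dots\}$ of $\N$ yields $\frac{d\mu}{dz} = |S_n|^2\,\frac{d\mu_n}{dz}$ a.e., where $S_n = \prod_{j=1}^{n}P_j$; since $\frac{d\mu}{dz} > 0$ a.e. and $S_n$ is a nonzero polynomial, this rearranges to $\sqrt{d\mu_n/dz} = f/|S_n|$ a.e., where $f \egdef \sqrt{d\mu/dz} > 0$ a.e. By Proposition \ref{prop2} there is a subsequence along which $|S_n| \to f$ a.e., and along it $\sqrt{d\mu_n/dz} = f/|S_n| \to 1$ a.e. Because $\int_{S^1}\sqrt{d\mu_n/dz}\,dz \leq 1$ for every $n$, Fatou's lemma applied along this subsequence gives $\liminf \int_{S^1}\sqrt{d\mu_n/dz}\,dz \geq \int_{S^1}1\,dz = 1$, so in fact $\int_{S^1}\sqrt{d\mu_n/dz}\,dz \to 1$ along the subsequence. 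Combined with the bound from the previous step this gives $\alpha_\lambda \geq 1$, completing the proof.
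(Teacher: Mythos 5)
Your proof is correct, but it follows a different route from the paper's, which is considerably shorter. The paper works with \emph{finite consecutive blocks} rather than infinite tails: by Proposition \ref{prop2} the partial products $\prod_{j=1}^{n}|P_j|$ converge a.e.\ $(dz)$ along a subsequence $n_1<n_2<\cdots$ to $f=\sqrt{d\mu/dz}$, which is positive a.e.\ by hypothesis, so the block products $\prod_{j=n_k+1}^{n_{k+1}}|P_j|$ --- being ratios of two a.e.-convergent partial products with a.e.\ positive limit --- converge a.e.\ to $1$; each block lies in $A_\lambda$ by dissociation (exactly your argument for $T_{n,m}$), and a single application of Fatou to the blocks gives $\liminf_k \bigl\|\prod_{j=n_k+1}^{n_{k+1}}P_j\bigr\|_1\geq 1$, whence $\alpha_\lambda=1$. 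You instead split $\mu$ into head times infinite tail and pass to the limit $m\to\infty$ first, which obliges you to verify that each tail $\mu_n$ is again of class (L), to invoke Proposition \ref{prop2} once per tail, and to call on the multiplicativity of densities before Fatou enters; all these steps are sound, but note that Corollary \ref{cor4} is formulated for subproducts over index sets, and your $\mathcal{K}_1=\{1,\dots,n\}$ is finite --- harmless, since the identity $f=|S_n|\sqrt{d\mu_n/dz}$ follows directly from the two $L^1$ limits $\prod_{j=1}^{m}|P_j|\to f$ and $T_{n,m}\to\sqrt{d\mu_n/dz}$ together with the boundedness of $|S_n|$ on $S^1$, but this deserves a word in place of the bare citation. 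What your detour buys is a genuinely stronger intermediate conclusion: you show $\int_{S^1}\sqrt{d\mu_n/dz}\,dz\to 1$ along a subsequence, i.e.\ the tails become flat in the $L^1$ sense, which is precisely the kind of quantity governing Theorem \ref{th9}; the paper's block argument is more economical but produces only a flat sequence of finite subproducts, with no statement about the tail measures themselves.
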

\begin{proof}
We know from proposition 5.2. $$\lim_{k\rightarrow \infty}\prod_{j=1}^k\mid P_j(z)\mid =\frac{d\mu}{dz} ~~a.e. ~~(dz).$$
Hence there is a sequence $n_1 < n_2 <\cdots$ such that $$\prod_{j=n_k+1}^{n_{k+1}}\mid P_j(z)\mid \rightarrow 1 ~~{\rm{a.e.}} ~~(dz) ~~{\rm{as}} ~~k\rightarrow \infty$$
Since $P_j$'s are dissociated,each finite product is in $A_\lambda$, the proposition follows from Fatou's lemma.
 \end{proof}

The following three problems about the class $A_\lambda$ are thus intimately related to spectral questions about invertible non-singular rank one transformations: 
\begin{enumerate}[(1)]
\item is $\displaystyle \sup_{P\in A_\lambda}\mid\mid P\mid\mid_1 =1$ ?,
\item is  $\displaystyle \sup_{P\in A_\lambda} M(P) =1$ ?,
\item is there a sequence $P_n, n=1,2,\cdots $ in $A_\lambda$ such that $\mid P_n(z)\mid \rightarrow 1$ a.e. $(dz)$ as $n\rightarrow \infty$
\end{enumerate}    

Concerning Bourgain's question, it is known that $\sup_{P\in R}\int_{S^1}\mid P(z)\mid dz \geq \frac{\sqrt \pi}{2}$. Indeed, let  $P_n, n =1,2,\cdots$ be the polynomials as in the generalized Riesz product associated to the rank one map. Put
$X_{n,j}(z)=z^{j h_n+a_{1,n}+\cdots + a_{n,j}}$. $X_{n,j}$ is a random variables. Since
$||P_n||_2=1,$ the random variables $P_n$ are uniformly integrable. In \cite{elabdaletds}, \cite{AH1} and \cite{AH2}, the authors, proved that there is a subclass of $P_n, n =1,2,\cdots,$ for which $P_n, n =1,2,\cdots$ converges in distribution to the complex Gaussian measure
${\mathcal{N}}_{\mathbb {C}}(0,1)$ on ${\mathbb {C}}$, that is,
$$dz\{ P_n \in A\} \tend{n}{+\infty}\int_A \frac1{\pi}e^{-|z|^2} dz.$$
Denote by $\mathcal{D}(P_n)$ the distribution of $P_n$.
It follows from the Standard Moment Theorem \cite[pp.100]{Chung} that
\[
||P_n||_1 =\int |P_n| dz =\int |w| d\mathcal{D}(P_n)(w) \tend{n}{\infty} \int |w| \frac1{\pi}e^{-|w|^2} dw=\frac{\sqrt{\pi}}2,
\]
$dw$ is the usual Lebesgue measure on ${\mathbb {C}}$, that is, $dx\cdot dy=rdrd\theta$.\\

Let $E = \{z: \frac{d\mu}{dz}(z) > 0\}$, where $\mu$ is a generalized Riesz product.
We will give an upper estimate of $dz (E)$.

\begin{Th}\label{th9}
Let $\mu = \prod_{j=1}^\infty \mid P_i\mid^2$ be of class (L). Let $E = \{z: \frac{d\mu}{dz}(z) > 0\}$. If $dz (E) =1$ then there is a flat sequence of finite subproducts of $P_j$'s. If $dz (E)$ is less than 1,
   then $dz (E) \leq d$, where $d$ is the liminf of $L^1(S^1,dz)$  norms of all finite subproducts of $P_j$'s.
\end{Th}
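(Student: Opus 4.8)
The plan is to route everything through Proposition~\ref{prop2}: since $\mu$ is of class (L), the partial products $\prod_{j=1}^n|P_j|$ converge in $L^1(S^1,dz)$ to $f\egdef\sqrt{d\mu/dz}$ and, along some subsequence $n_1<n_2<\cdots$, also $dz$-almost everywhere. By definition $E=\{f>0\}$. The central objects are the consecutive \emph{ratio blocks}
$$ B_k\egdef\prod_{j=n_k+1}^{n_{k+1}}|P_j|=\frac{\prod_{j=1}^{n_{k+1}}|P_j|}{\prod_{j=1}^{n_k}|P_j|},\qquad k=1,2,\dots, $$
each of which is the modulus of a finite subproduct, so $\|B_k\|_2=1$ by Definition 2.1(i).

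First I treat $dz(E)=1$, i.e.\ $f>0$ a.e. Wherever the a.e.\ convergence holds, numerator and denominator above tend to the same finite positive limit $f$, so $B_k\to f/f=1$ a.e. Since $\|B_k\|_2=1$ for every $k$, the sequence $B_k$ is precisely a flat sequence of finite subproducts, which is the first assertion. For the case $dz(E)<1$ the same computation gives $B_k\to 1$ a.e.\ on the set $E$ where $f$ is finite and positive, hence $\liminf_k B_k\ge \mathbf 1_E$ a.e. Fatou's lemma then yields
$$ dz(E)=\int_{S^1}\mathbf 1_E\,dz\le\int_{S^1}\liminf_k B_k\,dz\le\liminf_k\int_{S^1}B_k\,dz=\liminf_k\|B_k\|_1, $$
so these particular finite subproducts have $L^1$-norms asymptotically bounded below by $dz(E)$.

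The delicate point, which I expect to be the main obstacle, is reconciling this with the fact that $d$ is the $\liminf$ over \emph{all} finite subproducts, not merely over the ratio blocks $B_k$: Fatou controls the blocks coming from the a.e.-convergent subsequence, but an arbitrary minimizing sequence of subproducts could a priori carry smaller $L^1$-norm. To close this gap I would combine the $L^2$-normalization $\|Q\|_2=1$ of every finite subproduct $Q=\prod_{j\in F}|P_j|$ with the multiplicativity of absolutely continuous densities from Corollary~\ref{cor4}, which gives the pointwise factorization $f=|Q|\,h_F$ on $E$, where $h_F=\sqrt{d\mu_{F^c}/dz}$ is the density of the complementary cofinite subproduct and $E=\{h_F>0\}$ up to a null set. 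Feeding this into Jensen's inequality on $E$ (with respect to the normalized measure $dz{\restriction}_E$) produces the uniform lower bound
$$ \|Q\|_1\ge\int_E|Q|\,dz\ge dz(E)\,\exp\!\Big(\tfrac{1}{dz(E)}\int_E\log|Q|\,dz\Big). $$
The residual task is then to show that any sequence of subproducts whose $L^1$-norms approach $d$ must become flat on $E$, equivalently that $\int_E\log|Q|\,dz\to 0$ along it; ruling out subproducts that evade this by depositing $L^2$-mass off $E$ is the heart of the argument and is where the class (L) hypothesis must be used decisively.
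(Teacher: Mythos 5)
Your first half is correct and coincides with the paper's route: the ratio blocks $B_k$ built from the a.e.-convergent subsequence supplied by Proposition~\ref{prop2} tend to $1$ a.e.\ when $f>0$ a.e., and with $\|B_k\|_2=1$ and Fatou's lemma these blocks form the desired flat sequence of finite subproducts (this is exactly the "discussion above" that the paper's proof invokes). The genuine gap is in the second half, and you have diagnosed it yourself without repairing it: Fatou applied to the blocks gives $dz(E)\le\liminf_k\|B_k\|_1$, but the $B_k$ are one particular sequence of finite subproducts, so this liminf \emph{dominates} $d$ rather than being dominated by it --- the inequality points the wrong way, and $dz(E)\le d$ does not follow. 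Your proposed repair via the factorization $f=|Q|\,h_F$ and Jensen's inequality then rests entirely on the unproved claim that every minimizing sequence of subproducts satisfies $\int_E\log|Q|\,dz\to 0$; that claim is precisely the content of the inequality you are trying to prove, so the proposal is circular at the decisive step, and you give no mechanism (class (L) or otherwise) to rule out subproducts carrying their $L^2$-mass off $E$.

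The paper closes this gap by a device absent from your proposal, which never analyzes minimizing sequences pointwise. Let $a$ be the supremum of $\|P_{i_1}\cdots P_{i_l}\|_1$ over all finite subproducts; Fatou gives $\int_{S^1}\sqrt{d\mu'/dz}\,dz\le a$ for every subproduct $\mu'$. Splitting $\mathbb{N}$ into $2^n$ infinite sets and factoring the density by Corollary~\ref{cor4},
$$\Bigl(\frac{d\mu}{dz}\Bigr)^{\frac{1}{2^n}}=\prod_{i=1}^{2^n}\Bigl(\frac{d\mu_i}{dz}\Bigr)^{\frac{1}{2^n}},$$
iterated Cauchy--Schwarz yields $\int_{S^1}\bigl(\frac{d\mu}{dz}\bigr)^{1/2^n}dz\le a$ for every $n$; since $\bigl(\frac{d\mu}{dz}\bigr)^{1/2^n}\to \mathbf{1}_E$ boundedly (dominated by $\max\bigl(1,\frac{d\mu}{dz}\bigr)$), letting $n\to\infty$ gives $dz(E)\le a$. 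To replace $a$ by $d$, for $\eta>0$ one runs this whole scheme not on $\mu$ but on a subproduct $\mu_1$ over an index set $\mathcal{K}_1$ assembled from near-minimizing finite subproducts, obtaining $dz\{z:\frac{d\mu_1}{dz}(z)>0\}\le d+\eta$; then the factorization $\frac{d\mu}{dz}=\frac{d\mu_1}{dz}\frac{d\mu_2}{dz}$ of Corollary~\ref{cor4} forces $E\subseteq\{z:\frac{d\mu_1}{dz}(z)>0\}$ up to a null set, whence $dz(E)\le d+\eta$ and, $\eta$ being arbitrary, $dz(E)\le d$. The idea you are missing is this $2^n$-th root iteration: it converts $L^1$ bounds on subproducts directly into a bound on $dz(E)$, and the multiplicativity formula transfers that bound from the support of a well-chosen subproduct to $E$ itself.
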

\begin{proof}
The first part follows from the discussion above. We consider the second part.
Let $ a \egdef \sup \| P_{i_1}P_{i_2}\cdots P_{i_l}\|_1$, where the supremum is taken over all finite sequences
of increasing natural numbers $i_1< i_2<\cdots <i_l$. By Fatou's
lemma we know that $\left\|\sqrt {\frac{d\mu}{dz}}\right\|_1 \leq a$.
Take an infinite  subset ${\mathcal K}_1$ of natural numbers such that its complement ${\mathcal K}_2$ within
natural numbers is also infinite. Let $\mu_1$ and $\mu_2$
be the Riesz subproducts of $\mu$ over ${\mathcal K}_1$ and ${\mathcal K}_2$ respectively. Then  by corollary \ref{cor4}

$$\Bigl(\frac{d\mu}{dz}\Bigr)^{\frac{1}{4}}  = \Bigl(\frac{d\mu_1}{dz}\Bigr)^{\frac{1}{4}} \Bigl(\frac{d\mu_2}{dz}\Bigr)^{\frac{1}{4}}$$

By Cauchy-Schwarz inequality we get

    $$ \bigintss_{S^1}\Bigl(\frac{d\mu}{dz}\Bigr)^{\frac{1}{4}}dz \leq \sqrt a\sqrt a = a$$

   In general, by iterating,

   $$\bigintss_{S^1}\Bigl(\frac{d\mu}{dz}\Bigr)^{\frac{1}{2^n}}dz \leq a \eqno (2)$$

   Letting $n\rightarrow \infty $, we see that $dz (E) \leq a$

   Let  $$d \stackrel{\textrm{def}}{=} \liminf\Biggl\{\bigintss_{S^1}\mid P_{i_1}P_{i_2}\cdots P_{i_k}\mid: {i_1<i_2 <\cdots i_k}, k=1,2,\cdots\Biggr\}$$

   Now for any $\eta >0$, considerations leading to  equation (2) above can be applied to a suitable subproduct, say $\mu_1$, over a set ${\mathcal K}_1$ of natural numbers,  so that
   $$dz\Bigl\{z: \frac{d\mu_1}{dz}(z)> 0\Bigr\} \leq d + \eta.$$ By formula (1)
   we see that $dz (E) \leq d +\eta$. Since $\eta$ is arbitrary, we have $ dz(E) \leq d$.
\end{proof}

 In connection with the discussion above, we have the following:
 \begin{Th}\label{th10}
 Let $\mu = \prod_{j=1}^\infty \mid P_j\mid^2$ be of class (L) and assume
 that
 \begin{enumerate}
   \item $\|P_j\|_1 \tend{j}{+\infty} c$, $c \in[0,1[$ and,
   \item for any continuous function $g$ on $\T$, we have
   $$\bigintss g |P_j| dz \tend{j}{+\infty} c \bigintss g dz.$$
 Then $\mu$ is singular.
 \end{enumerate}
 \end{Th}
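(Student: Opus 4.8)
The plan is to reduce the statement to Bourgain's singularity criterion, i.e. to Corollary \ref{cor3} applied with $Q_j\equiv 1$ (so that $\nu=dz$), whose hypotheses all hold here: condition (1) of Theorem \ref{th6} is trivial for Lebesgue measure, condition (2) is the very definition of $\mu$, and condition (3) is exactly the class (L) assumption on $\mu$. Concretely, set $R_n\egdef\prod_{j=1}^n|P_j|$ and $f\egdef\sqrt{d\mu/dz}$. Since $\mu$ is of class (L), Proposition \ref{prop2} already gives $R_n\To f$ in $L^1(S^1,dz)$, so $\|R_n\|_1\To L\egdef\int_{S^1}f\,dz$. Thus it suffices to prove $L=0$: then $f=0$ a.e., $d\mu/dz=0$ a.e., and $\mu$ is singular. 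Note also that hypothesis (1) is not independent, being the case $g\equiv 1$ of (2).

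The engine I would use is the one-step recursion coming from $R_n=R_{n-1}|P_n|$:
$$\|R_n\|_1=\int_{S^1}R_{n-1}|P_n|\,dz=c\,\|R_{n-1}\|_1+\int_{S^1}R_{n-1}\bigl(|P_n|-c\bigr)\,dz.$$
If the last integral $\varepsilon_n\egdef\int_{S^1}R_{n-1}(|P_n|-c)\,dz$ tends to $0$, then letting $n\to\infty$ yields $L=cL$, whence $L(1-c)=0$ and $L=0$ because $c<1$. So the entire problem reduces to proving $\varepsilon_n\To 0$.

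I would split $\varepsilon_n=\int_{S^1}(R_{n-1}-f)(|P_n|-c)\,dz+\int_{S^1}f(|P_n|-c)\,dz$. The second term tends to $0$ directly from hypothesis (2): since $f\in L^2$ (indeed $\int f^2\,dz=\int \tfrac{d\mu_a}{dz}\,dz\le 1$) and $\|P_n\|_2=1$, one extends (2) from continuous $g$ to $g=f$ by approximating $f$ in $L^2$ by continuous functions and using Cauchy--Schwarz on the error. The first term is the heart of the matter and the main obstacle: $R_{n-1}-f\to0$ only in $L^1$, whereas $|P_n|-c$ is merely bounded in $L^2$ (one computes $\||P_n|-c\|_2^2\to 1-c^2$) and unbounded in $L^\infty$, so the two factors cannot be paired off naively — this is precisely where the mass escaping into the singular part must be controlled.

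To handle it I would truncate, leaning on the defining $L^2$ normalization $\|R_n\|_2=1$ of a generalized Riesz product. Fix $M>0$ and split $S^1=A_n\cup A_n^c$ with $A_n=\{|P_n|\le M\}$. On $A_n$ the factor $|P_n|-c$ is bounded by $M+c$, so that contribution is at most $(M+c)\,\|R_{n-1}-f\|_1$, which $\To 0$ for each fixed $M$. On $A_n^c$, Chebyshev gives $dz(A_n^c)\le M^{-2}$ uniformly in $n$, and then
$$\Bigl|\int_{A_n^c}(R_{n-1}-f)(|P_n|-c)\,dz\Bigr|\le\int_{A_n^c}(R_{n-1}+f)(|P_n|+c)\,dz,$$
whose four terms are bounded, via Cauchy--Schwarz together with $\|R_{n-1}\|_2=\|R_n\|_2=1$, $\|P_n\|_2=1$, $\|f\|_2\le1$, and the absolute continuity of $\int f^2\,dz$, by a quantity tending to $0$ as $M\to\infty$ uniformly in $n$. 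Choosing $M$ large and then $n$ large forces $\varepsilon_n\To 0$, which closes the recursion and proves $L=0$, hence the singularity of $\mu$.
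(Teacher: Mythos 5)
Your reduction is correct as far as it goes: by Proposition \ref{prop2} (class (L) plus Theorem \ref{th6} with $Q_j\equiv 1$) one has $R_n\to f$ in $L^1(S^1,dz)$, so $\|R_n\|_1\to L=\int_{S^1}f\,dz$ and singularity of $\mu$ is indeed equivalent to $L=0$; the recursion, and your treatment of the term $\int_{S^1} f(|P_n|-c)\,dz$ via $L^2$-approximation of $f$ by continuous functions and $\||P_n|-c\|_2\le 1+c$, are both fine. The genuine gap is the single term $\int_{A_n^c}R_{n-1}|P_n|\,dz$. Cauchy--Schwarz can only give $\min\bigl\{(\int_{A_n^c}R_{n-1}^2\,dz)^{1/2},\,(\int_{A_n^c}|P_n|^2\,dz)^{1/2}\bigr\}$, and neither factor is small uniformly in $n$ as $M\to\infty$. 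Smallness of the second factor is exactly uniform integrability of $\{|P_n|^2\}$, which hypotheses (1)--(2) do not supply: the profile $|P_n|=c$ off a set $E_n$ of measure $\delta_n$ and $|P_n|=K_n$ on $E_n$, with $c^2(1-\delta_n)+K_n^2\delta_n=1$ and $K_n\delta_n\to 0$, satisfies (1) and (2) while $\int_{\{|P_n|>M\}}|P_n|^2\,dz\to 1-c^2$ for every fixed $M$; since your estimates use only the norm facts $\|R_{n-1}\|_2=\|P_n\|_2=1$ and $dz(A_n^c)\le M^{-2}$, they cannot beat the ceiling $\sqrt{1-c^2}$ on this term. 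Smallness of the first factor fails for the opposite reason: when $\mu$ is singular --- the very regime of the theorem --- $R_{n-1}^2\,dz\to\mu$ concentrates its unit mass on Lebesgue-null sets, so $\int_A R_{n-1}^2\,dz$ can be near $1$ on sets of arbitrarily small measure, and Chebyshev gives no control.

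Moreover this is not a repairable technicality but the entire content of the theorem: since $\varepsilon_n=\|R_n\|_1-c\|R_{n-1}\|_1\to(1-c)L$ and you have shown $\int_{S^1} f(|P_n|-c)\,dz\to 0$, your ``first term'' $\int_{S^1}(R_{n-1}-f)(|P_n|-c)\,dz$ converges precisely to $(1-c)L$, so asserting it vanishes is verbatim equivalent to the conclusion $L=0$; the decomposition relocates the difficulty rather than resolving it. (Your argument does prove the theorem under the extra hypothesis that $\{|P_n|^2\}$ is uniformly integrable --- then $A_n^c$ carries negligible $|P_n|^2$-mass --- but (1)--(2) with $c<1$ are exactly compatible with escape of mass $1-c^2$ to high levels of $|P_n|$.) The paper imports the missing engine from outside: it extracts a weak $L^2(dz)$ limit $\phi$ of the bounded sequence $\bigl|\,|P_j|-1\,\bigr|$, invokes the result of \cite{elabdaletds} that the measure $\phi\,dz$ is \emph{singular with respect to} $\mu$, and uses hypothesis (2) only for the elementary lower bound $\int g\phi\,dz\ge(1-c)\int g\,dz$ for $g\ge 0$ continuous, i.e. $\phi\ge 1-c>0$ a.e., so that $\phi\,dz$ is equivalent to Lebesgue measure and hence $\mu\perp dz$. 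Some substitute for that singularity lemma --- a quantitative decorrelation of $R_{n-1}$ from the peak set of $P_n$ --- is what your truncation step would need and does not have.
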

 \begin{proof} The sequence $||P_j|-1|$ is bounded in $L^2(dz)$. It follows
 that there exists a function $\phi$ in $L^2(dz)$ such that $||P_j|-1|$
 converge weakly over a subsequence, say $n_j, j=1, 2, \cdots$, to $\phi$
 (without loss of generality we assume that $n_j = j, j = 1,2, \cdots$). It
 is shown in \cite{elabdaletds} that the measure $\phi(z) dz$ is singular
 with respect to $\mu$. According to our assumptions, we further have that
 $\phi(z) dz$ is equivalent to Lebesgue measure. Indeed, for any nonnegative
 continuous function $g$ on $\T$, we have
 $$
 \bigintss g ||P_j|-1| dz \geq \bigintss g dz-\bigintss g|P_j| dz.
 $$
 Hence, by taking the limit combined with our assumptions, we get
 $$\bigintss g \phi dz \geq (1-c) \bigintss g dz,$$
 which finish the proof of the theorem.\\

\end{proof}

\section{Zeros of  Polynomials.}

 Consider the polynomial of the type
$$P(z) = 1 + z^{h +a_1} + z^{2h + a_1 + a_2} +\cdots + z^{(m-1)h + a_1 + a_2 +\cdots + a_{m-1}}, \eqno (R)  $$
which appears in the generalized Riesz product connected with rank one measure preserving transformation.

It is easy to see that zeros of these polynomials cluster near the unit circle as $h$ tends to $\infty$. We prove a  quantitative result, namely, if $ w$ is a zero of this polynomial then

$$\Bigl(\frac{1}{2}\Bigr)^{\frac{1}{h}} \leq \mid w\mid \leq  (2)^{\frac{1}{h}} ~~~~~    \eqno (3)$$

To see this we write $\mid w \mid =a$. Assume first that $ a  \leq 1$. Then, since $w$ is a zero of $P$,
$$    a^h + a^{2h} + \cdots + a^{(m-1)h} \geq 1.$$
Equivalently,

$$a^h\frac{(1- a^{(m-1)h})}{1 - a^h} \geq 1. $$
$$a^h - a^{mh} \geq 1 - a^h$$
$$2a^h \geq 1 + a^{mh} \geq 1$$

which proves the result when $| w| \leq 1$. To prove the second half of the inequality we note that
if $\mid w \mid$ is greater than 1 then $\frac{1}{| w|} \leq 1$ and $\frac{1}{w}$ is a zero of $P(\frac{1}{z})$ so the second half follows from the first half. A slight improvement of the inequality is possible. If $m =2$ then all the zeros of $P$ lie on the unit circle. It is easy to show that if $m >2$ then the equation $x^m - 2x +1$ has a zero, say $b_m$, in the open interval $\frac{1}{2} < x < 1$. and one can show that

$$(b_m)^{\frac{1}{h}} \leq \mid w\mid  \leq \Big(\frac{1}{b_m}\Big)^{\frac{1}{h}}.$$
However, it is not a very big improvement since one can show that $b_m\rightarrow \frac{1}{2}$ as $m\rightarrow \infty$.

This simple result tells us that if each $P_k$ has less than $ch_{k-1}$ zeros bigger than 1 in absolute value
where $c$ is a positive constant less than one, then $\prod_{k=1}^\infty | \alpha_k| = 0$.\\

 We mention that M. Odlyzko and B. Poonen in \cite{Odlyzko} proved that the zeros of the polynomials with coefficients in $\{0,1\}$ are contained in the annulus $\frac1{\phi}<|z|<\phi$ where $\phi$ is the golden ratio.\\

Zeros of polynomials with restricted coefficients has deep and extensive literature. We mention only a result in a recent paper. (P. Brown, T. Erd\'elyi, F. Littmann \cite{BEL}). Let
$$K_n = \Biggl\{\sum_{k=0}^na_kz^k: \mid a_0\mid = \mid a_n\mid =1, \mid a_k\mid \leq 1\Biggr\},$$
and let $n$ be so large that $\delta_n = 33\pi\frac{log(n)}{\sqrt{n}} < 1,$
then any polynomial in $K_n$ admits at least $8\sqrt n \log n$ zeros in $\delta_n$ neighborhood of any point  of the unit the circle. Thus the derived set, i.e., the set of limit points of the zeros of the polynomials appearing in the generalized Riesz product (R) is the full unit circle.\\

 \paragraph{\textbf{Acknowledgements.}} The first author wishes to express his sincere
 thanks to the National Center for Mathematics and IIT, Mumbai, where a part of this paper was written, for an invitation and warm hospitality.\\

\end{document}